\newtheorem{theorem}{Theorem}[section]
\newtheorem{lemma}[theorem]{Lemma}
\newtheorem{proposition}[theorem]{Proposition}
\newtheorem{definition}[theorem]{Definition}
\newtheorem{remark}[theorem]{Remark}
\newtheorem{corollary}[theorem]{Corollary}
\def\deg{\textrm{deg}\,}
\def\e{\varepsilon}
\def\dist{{\rm dist}}
\def\sym{{\mathrm{sym\,}}}
\def\supp{{\mathrm{supp\,}}}
\def\S{\mathbb{S}}
\def\R{\mathbb{R}}
\def\N{\mathbb{N}}
\begin{document}

\title[$C^{1,\alpha}$ isometric embeddings]{$h$-principle and rigidity
for $C^{1,\alpha}$ isometric embeddings}

\author{Sergio Conti}
\address{Institut f\"ur Angewandte Mathematik, Universit\"at Bonn, D-53115 Bonn}
\email{sergio.conti@uni-bonn.de}

\author{Camillo De Lellis}
\address{Institut f\"ur Mathematik, Universit\"at Z\"urich, 
CH-8057 Z\"urich}
\email{camillo.delellis@math.unizh.ch}

\author{L\'aszl\'o Sz\'ekelyhidi Jr.}
\address{Hausdorff Center for Mathematics, Universit\"at Bonn, 
D-53115 Bonn}
\email{laszlo.szekelyhidi@hcm.uni-bonn.de}

\begin{abstract}
In this paper we study the embedding of Riemannian manifolds in low codimension. The well-known result of Nash and Kuiper \cite{Nash54,Kuiper55} says that any short embedding in codimension one can be uniformly approximated by $C^1$ isometric embeddings. This statement clearly cannot be true for $C^2$ embeddings in general, due to the classical rigidity in the Weyl problem. In fact Borisov extended the latter to embeddings of class $C^{1,\alpha}$ with $\alpha>2/3$ in \cite{BorisovRigidity1,BorisovRigidity2}. On the other hand he announced in \cite{Borisov65} that the Nash-Kuiper statement can be extended to local $C^{1,\alpha}$ embeddings with $\alpha<(1+n+n^2)^{-1}$, where $n$ is the dimension of the manifold, provided the metric is analytic. Subsequently a proof of the 2-dimensional case appeared in \cite{Borisov2004}. In this paper we provide analytic proofs of all these statements, for general dimension and general metric.  
\end{abstract}

\maketitle

\section{Introduction}
Let $M^n$ be a smooth compact manifold of dimension $n\geq 2$, equipped with a Riemannian metric $g$. An isometric immersion
of $(M^n,g)$ into $\R^m$ is a map $u\in C^1(M^n;\R^m)$ such that the induced metric agrees with $g$. In local coordinates this amounts to 
the system
\begin{equation}\label{e:equations}
\partial_iu\cdot\partial_ju=g_{ij}
\end{equation}
consisting of $n(n+1)/2$ equations in $m$ unknowns. If in addition $u$ is injective, it is an isometric embedding. 
Assume for the moment that $g\in C^{\infty}$. The two classical theorems concerning the solvability of this system are: 
\begin{enumerate}
\item[(A)] if $m\geq (n+2)(n+3)/2$, then any short embedding can be uniformly approximated by  isometric embeddings of class $C^{\infty}$ (Nash \cite{Nash56}, Gromov \cite{Gromov86});
\item[(B)] if $m\geq n+1$, then any short embedding can be uniformly approximated by isometric embeddings of class $C^1$ (Nash \cite{Nash54}, Kuiper \cite{Kuiper55}).
\end{enumerate}
Recall that a short embedding is an injective map $u:M^n\to \R^m$ such that the metric induced on $M$ by $u$ is shorter than $g$. In coordinates this means that $(\partial_iu\cdot\partial_ju)\leq (g_{ij})$ in the sense of quadratic forms. Thus, (A) and (B) are not merely existence theorems, they show that there exists a huge (essentially $C^0$-dense) set of solutions. This type of abundance of solutions is a central aspect of Gromov's $h$-principle, for which the isometric embedding problem is a primary example (see \cite{Gromov86,Eliashberg}). 

Naively, this type of flexibility could be expected for high codimension as in (A), since then there are many more unknowns than equations in \eqref{e:equations}. The $h$-principle for $C^1$ isometric embeddings is on the other hand rather striking, especially when compared to the classical rigidity result concerning the Weyl problem: if $(S^2,g)$ is a 
compact Riemannian surface with positive Gauss curvature and $u\in C^2$ is an isometric immersion into $\R^3$, then $u$ is uniquely determined up to a rigid motion (\cite{CohnVossen,Herglotz}, see also \cite{Spivak5} for a thorough discussion). Thus it is clear that isometric immersions have a completely different qualitative behaviour 
at low and high regularity (i.e. below and above $C^2$).

This qualitative difference is further highlighted by the following optimal mapping properties in the case when $m$ is allowed to be sufficiently high:
\begin{enumerate}
\item[(C)] if $g\in C^{l,\beta}$ with $l+\beta>2$ and $m$ is sufficiently large, then there exists a solution $u\in C^{l,\beta}$ (Nash \cite{Nash56}, Jacobowitz \cite{Jacobowitz});
\item[(D)] if $g\in C^{l,\beta}$ with $0<l+\beta<2$ and $m$ is sufficiently large, then there exists a solution $u\in C^{1,\alpha}$ with 
$\alpha<(l+\beta)/2$ (K\"allen \cite{Kallen}).
\end{enumerate}
These results are optimal in the sense that in both cases there exists $g\in C^{l,\beta}$ to which no solution $u$ has better regularity than stated.

The techniques are also different: whereas the proofs of (A) and (C) rely on the Nash-Moser implicit function theorem, the proofs of (B) and (D) involve an iteration technique called convex integration. This technique was developed by Gromov \cite{Gromov73,Gromov86} into a very powerful tool to prove the $h$-principle in a wide variety of geometric problems (see also \cite{Eliashberg,Spring}). In general the regularity of solutions obtained using convex integration agrees with the highest derivatives appearing in the equations (see \cite{SpringRegularity}). Thus, an interesting question raised in \cite{Gromov86} p219 is 
how one could extend the methods to produce more regular solutions. 
Essentially the same question, in the case of isometric embeddings,
is also mentioned in \cite{Yau} (see Problem 27).
For high codimension this is resolved in (D). 

Our primary aim in this paper is to consider the low codimension case, i.e. when $m=n+1$. 
This range was first considered by Borisov. In \cite{Borisov65} it was announced that if $g$ is analytic, then the $h$-principle holds for local isometric embeddings $u\in C^{1,\alpha}$ for $\alpha<\frac{1}{1+n+n^2}$. A  proof for the case $n=2$ appeared in \cite{Borisov2004}.
Our main result is to provide a proof of the $h$-principle in this range for $g$ which is not necessarily analytic and general $n\geq 2$ (see Section \ref{s:results1} for precise statements). Moreover, at least for $l=0$ and sufficiently small $\beta>0$, we recover the optimal mapping range corresponding to (D). Thus, there seems to be a direct trade-off between codimension and regularity.

The novelty of our approach, compared to Borisov's, is that only a finite number of derivatives need to be controlled. This is achieved by introducing a smoothing operator in the iteration step, analogous to the device of Nash used to overcome the loss of derivative problem in \cite{Nash56}. A similar method was used by K\"allen in \cite{Kallen}. See Section \ref{s:overview} for an overview of the iteration procedure. In addition, the errors coming from the smoothing operator are controlled by using certain commutator estimates on convolutions. These estimates are in Section \ref{s:convolutions}.

Concerning rigidity in the Weyl problem, it is known from the work of 
Pogorelov and Sabitov that 
\begin{enumerate}
\item closed $C^1$ surfaces with positive Gauss curvature 
and bounded extrinsic curvature are convex (see \cite{Pogorelov73});
\item closed convex surfaces are rigid in the sense that isometric 
immersions are unique up to rigid motion \cite{PogorelovRigidity};
\item a convex surface with metric $g\in C^{l,\beta}$ with $l\geq 2,0<\beta<1$ and positive curvature is of class $C^{l,\beta}$ (see \cite{Pogorelov73,Sabitov}).
\end{enumerate}
Thus, extending the rigidity in the Weyl problem to $C^{1,\alpha}$ isometric immersions can be reduced to showing that the image of the surface has bounded extrinsic curvature (for definitions see Section \ref{s:rigidity}). Using geometric arguments, in a series of papers \cite{Borisov58-1,Borisov58-2,BorisovRigidity1,Borisov58-3,BorisovRigidity2} Borisov proved that for $\alpha>2/3$ the image of surfaces with positive Gauss curvature has indeed bounded extrinsic curvature. Consequently, rigidity holds in this range and in particular $2/3$ is an upper bound on the range of H\"older exponents that can be reached using convex integration. 

Using the commutator estimates from Section \ref{s:convolutions}, at the end of this paper (in Section \ref{s:rigidity}) we provide a short and self-consistent analytic proof of this result.

\subsection{The $h$--principle for small exponents}\label{s:results1}

In this subsection we state our main existence
results for 
$C^{1, \alpha}$ isometric immersions. 
One is of local nature, whereas the second
is global. Note that for the local result 
the exponent matches
the one announced in \cite{Borisov65}.
In what follows, we denote by $\sym^+_n$ the cone
of positive definite symmetric $n\times n$ matrices. 
Moreover, given
an immersion $u: M^n\to \R^m$, we denote by $u^\sharp e$
the pullback of the standard Euclidean metric through $u$, so that in local coordinates
$$
(u^\sharp e)_{ij}=\partial_iu\cdot\partial_ju.
$$
Finally, let
$$
n_*=\frac{n(n+1)}{2}.
$$

\begin{theorem}[Local existence]\label{t:local}
Let $n\in \N$ and $g_0\in \sym^+_n$. There
exists $r>0$ such that the following holds for
any smooth bounded open set $\Omega\subset\R^n$
and any Riemannian metric $g\in C^{\beta}(\overline{\Omega})$ with 
$\beta>0$ and $\|g-g_0\|_{C^0}\leq r$. 
There exists a constant $\delta_0>0$ such that, if 
$u\in C^2(\overline{\Omega};\R^{n+1})$ and $\alpha$ satisfy
$$
\|u^\sharp e-g\|_0\leq \delta_0^2 \qquad \mbox{and}\qquad
0<\alpha<\min\left\{\frac{1}{1+2n_*},\frac{\beta}{2}\right\}\, ,
$$ 
then there exists a map $v\in C^{1,\alpha} (\overline{\Omega}; \R^{n+1})$ with 
$$
v^\sharp e \;=\; g\qquad\mbox{and}\qquad
\|v-u\|_{C^1} \;\leq\; C\, \|u^\sharp e-g\|_{C^0}^{1/2}\, .
$$
\end{theorem}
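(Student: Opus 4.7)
The plan is to construct $v$ as the limit of an iterative scheme $\{u_q\}_{q\ge 0}$, reminiscent of Nash--Moser, with $u_0=u$, so that at each step the metric defect $g - u_q^\sharp e$ shrinks geometrically while the $C^1$ perturbation stays summable. The construction is driven by two parameter sequences: a frequency $\lambda_q$ growing super-exponentially and a defect amplitude $\delta_q\to 0$. The increments $u_{q+1}-u_q$ are built by convex integration, i.e., by superposing high-frequency corrugations at frequency $\lambda_{q+1}$, and each step is preceded by a mollification of $u_q$ and of $g$ at a suitable length scale $\ell_q$ in order to defeat the classical loss-of-derivative issue inherent to such iterations.

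The core ingredient is a single \emph{stage lemma}: given $u_q\in C^2$ satisfying $\|g-u_q^\sharp e\|_0\le\delta_q$ and an inductive $C^2$ bound $\|u_q\|_{C^2}\le M\delta_{q-1}^{1/2}\lambda_q$, produce $u_{q+1}\in C^2$ with
\begin{equation*}
\|g-u_{q+1}^\sharp e\|_0\le\delta_{q+1},\quad \|u_{q+1}-u_q\|_{C^1}\le C\delta_q^{1/2},\quad \|u_{q+1}-u_q\|_{C^0}\le C\delta_q^{1/2}\lambda_{q+1}^{-1},
\end{equation*}
together with the updated $\|u_{q+1}\|_{C^2}\le M\delta_q^{1/2}\lambda_{q+1}$. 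To prove this I would first mollify $u_q$ and $g$ at scale $\ell_q$, then decompose the smoothed defect as a sum $\sum_{k=1}^{n_*}a_k^2\,\nu_k\otimes\nu_k$ of $n_*$ primitive rank-one metrics (possible uniformly along the iteration thanks to the hypothesis $\|g-g_0\|_0\le r$, which keeps the decomposition non-degenerate), and finally perform $n_*$ successive Kuiper-type corrugations at a geometric sequence of frequencies $\mu_k$ with $\mu_{n_*}=\lambda_{q+1}$: each corrugation adds a spiralling perturbation in the plane spanned by a unit tangent in direction $\nu_k$ and the unit normal to the current surface, tuned so that its contribution to the pull-back metric is $a_k^2\nu_k\otimes\nu_k$ up to a small error.

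Parameters will be chosen as $\lambda_q=a^{b^q}$ and $\delta_q=\lambda_q^{-2\gamma}$, with $b>1$ close to $1$ and $\gamma$ just below $\min\{1/(1+2n_*),\beta/2\}$. These choices make the telescoping sum $\sum_q\delta_q^{1/2}\lambda_q^\alpha$ convergent whenever $\alpha<\gamma$, yielding both convergence $u_q\to v$ in $C^{1,\alpha}$ and the bound $\|v-u\|_{C^1}\le C\|u^\sharp e-g\|_0^{1/2}$ (the latter by summing the geometric-type series of $C^1$ increments whose first term is of order $\delta_0^{1/2}$). The threshold $\alpha<1/(1+2n_*)$ comes from a careful balancing of the $C^2$ control against the $n_*$ consecutive corrugations within each stage; the bound $\alpha<\beta/2$ enters through the mollification of the metric, which only satisfies $\|g_{\ell_q}\|_{C^2}\le C\ell_q^{\beta-2}$.

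The main difficulty I anticipate is closing the inductive estimates against the quadratic nonlinearity of the metric equation, because the operations \emph{pull-back by $u$} and \emph{mollification at scale $\ell$} do not commute: the residue $(u_q^\sharp e)_{\ell_q}-\tilde u_q^\sharp e$ is not automatically negligible relative to $\delta_q$. This is precisely what the commutator estimates of Section~\ref{s:convolutions} are designed to handle; they provide sharp bounds such as $\|(fg)_\ell - f_\ell g_\ell\|_0\le C\ell^{2\alpha}[f]_\alpha[g]_\alpha$, quantifying the gain of two derivatives one obtains by convolving a product instead of the factors separately. Once these are in place, a meticulous accounting of all error contributions from corrugations and mollifications at each stage, weighed against the chosen triple $(\lambda_q,\delta_q,\ell_q)$, closes the iteration and delivers the desired map $v\in C^{1,\alpha}$ with $v^\sharp e = g$.
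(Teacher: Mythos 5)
Your proposal follows essentially the same route as the paper: mollification of both map and metric at a stage-dependent scale to defeat the loss of derivatives, decomposition of the mollified defect into $n_*$ primitive rank-one metrics (non-degenerate because $\|g-g_0\|_0\le r$), $n_*$ serial Kuiper corrugations at a geometric cascade of frequencies, and closure via the quadratic commutator estimate $\|(fg)*\varphi_\ell-(f*\varphi_\ell)(g*\varphi_\ell)\|_0\lesssim \ell^{2\alpha}[f]_\alpha[g]_\alpha$ followed by interpolation between the $C^1$ and $C^2$ increments to get $C^{1,\alpha}$ convergence for $\alpha<\min\{1/(1+2n_*),\beta/2\}$. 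The one cosmetic difference is the outer parameterization: you take double-exponential frequencies $\lambda_q=a^{b^q}$ with $\delta_q=\lambda_q^{-2\gamma}$, whereas the paper fixes a single large $K$ and runs geometric growth $\mu_k=\mu_0 K^{kn_*}$, $\delta_k\le\delta_0 K^{-ak}$ with $a<\min\{1/2,\beta n_*/(2-\beta)\}$; both close, and the thresholds come out the same, so this is a choice of bookkeeping rather than a different idea.
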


\begin{corollary}[Local h-principle]\label{c:local}
Let $n,g_0,\Omega,g,\alpha$ be as in Theorem \ref{t:local}. Given any
short map $u\in C^1(\overline{\Omega};\R^{n+1})$ and any $\e>0$ there exists an isometric immersion $v\in C^{1,\alpha}(\overline{\Omega};\R^{n+1})$ with $\|u-v\|_{C^0}\leq \e$.
\end{corollary}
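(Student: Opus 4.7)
The plan is to invoke Theorem~\ref{t:local} after a preparatory construction that turns the arbitrary short $C^1$ map $u$ into a smooth map $u_\star$ whose induced metric already lies within $\delta_0^2$ of $g$ in $C^0$, while remaining $C^0$-close to $u$.

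First I would regularize and slightly contract: for small parameters $\eta,\rho > 0$, set $u_0 := (1-\eta)(u \star \phi_\rho)$, where $\phi_\rho$ is a standard mollifier (extending $u$ past $\partial\Omega$ by, say, Whitney extension). Since $u \in C^1$, $\nabla(u \star \phi_\rho) \to \nabla u$ uniformly as $\rho\to 0$, so for $\rho$ sufficiently small the map $u_0$ is smooth, strictly short with a uniform quadratic-form gap $u_0^\sharp e \leq (1-c\eta) g$, and satisfies $\|u_0 - u\|_{C^0} \leq \e/3$.

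Next I would close the gap by a finite convex integration. Decompose the positive definite tensor $g - u_0^\sharp e$ as $\sum_{k=1}^{n_*} a_k^2(x)\,\nu_k\otimes\nu_k$ with constant $\nu_k\in\S^{n-1}$ and smooth nonnegative $a_k$ (the classical primitive metric decomposition, which works because the gap is uniform). Then add $n_*$ successive Nash corrugations to $u_0$: at each step one perturbs the current map by a high-frequency normal oscillation of frequency $\lambda$ modulated by $a_k$ in the direction $\nu_k$, which contributes the rank-one term $a_k^2\nu_k\otimes\nu_k$ to the induced metric up to an $O(1/\lambda)$ remainder, while moving the map by $O(1/\lambda)$ in $C^0$. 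Choosing $\lambda$ sufficiently large produces a smooth $u_\star$ with $\|u_\star^\sharp e - g\|_{C^0} \leq \delta_0^2$ and $\|u_\star - u_0\|_{C^0} \leq \e/3$.

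The final step is to apply Theorem~\ref{t:local} to $u_\star$, with $\delta_0$ having been chosen in advance small enough that $C\delta_0 \leq \e/3$, where $C$ is the constant provided by the theorem. This yields an isometric immersion $v \in C^{1,\alpha}$ with $\|v - u_\star\|_{C^1} \leq C\|u_\star^\sharp e - g\|_{C^0}^{1/2} \leq \e/3$, and combining the three bounds gives $\|v - u\|_{C^0} \leq \e$. The main technical content sits in the second step, but it is merely the classical one-stage Nash--Kuiper construction: only finitely many smooth corrugations at a fixed (albeit large) frequency $\lambda$ are needed, so no regularity is lost and all estimates are elementary. The delicate issue is ordering the parameter choices---one fixes $\delta_0$ first (dictated by Theorem~\ref{t:local}), then $\eta$ small enough to give a workable gap and $C^0$-margin, then $\rho$ small relative to the modulus of continuity of $\nabla u$, and finally $\lambda$ large enough to drive both the residual metric error below $\delta_0^2$ and the $C^0$-displacement of each corrugation below $\e/(3n_*)$.
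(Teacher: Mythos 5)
Your overall architecture — produce a $C^2$ map $u_\star$ with small metric defect that is $C^0$-close to $u$, then apply Theorem~\ref{t:local} — is the same as the paper's, but you try to build $u_\star$ by hand via a single stage of corrugations, whereas the paper simply invokes the Nash--Kuiper $C^1$ $h$-principle as a black box and mollifies the resulting $C^1$ isometric immersion. The difficulty is that your decomposition step does not hold as stated. You claim that the shortness gap $g - u_0^\sharp e$, being uniformly positive definite, can be written as $\sum_{k=1}^{n_*} a_k^2(x)\,\nu_k\otimes\nu_k$ with $n_*$ \emph{constant} directions $\nu_k$ and smooth nonnegative $a_k$. This is false. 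Lemma~\ref{l:Carat} produces linear functionals $L_k$ with $L_k(h)>0$ only for $h$ within distance $r$ of a fixed $g_0$; the identity $h=\sum L_k(h)\nu_k\otimes\nu_k$ holds for every $h\in\sym_n$, but the coefficients change sign once $h$ leaves that ball, so there is no globally valid representation with $a_k^2\geq 0$. After contracting by $(1-\eta)$ the gap only satisfies $c\eta\, I\leq g-u_0^\sharp e\leq g$ (as quadratic forms), and such tensors range over a compact set in $\sym_n^+$ much larger than the simplicial cone generated by $n_*$ fixed rank-one matrices — e.g.\ already in $n=2$ with $g_0=I$, the tensor $\mathrm{diag}(\eta,1)$ lies outside that cone for small $\eta$. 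The paper never faces this issue: in Proposition~\ref{p:stage} the gap is already of size $\delta^2\ll r$, so the rescaled tensor $\tilde h=\tilde g+\frac{r}{C\delta^2}(\tilde g-\tilde u^\sharp e)$ stays within $2r$ of $g_0$ and Lemma~\ref{l:Carat} applies directly.

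To salvage your route you would need the heavier decomposition lemma: for any compact $K\subset\sym_n^+$ there exist finitely many $\nu_1,\dots,\nu_m\in\S^{n-1}$ (with $m>n_*$ in general) and smooth $b_k:K\to[0,\infty)$ such that $h=\sum_k b_k(h)^2\nu_k\otimes\nu_k$ for all $h\in K$; this is proved by covering $K$ with balls on which Lemma~\ref{l:Carat} applies and gluing with a squared partition of unity in $\sym_n^+$. You should also note that the corrugation frequencies must grow geometrically from step to step (step $k$ needs $\lambda_k\gg\|u_{k-1}\|_2\sim\lambda_{k-1}$), not stay fixed; this does not affect the conclusion since only the smallness of the residual and the $C^0$-displacement matter. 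Finally, $\delta_0$ in Theorem~\ref{t:local} is an output of the theorem, not a free parameter — the correct move is to drive $\|u_\star^\sharp e-g\|_0$ below $\min\{\delta_0^2,(\e/3C)^2\}$. With these repairs your argument would go through, but at that point you have essentially reproved the one-stage Nash--Kuiper approximation, which the paper gets for free by citation.
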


\begin{theorem}[Global existence]\label{t:global}
Let $M^n$ be a smooth, compact manifold with a Riemannian metric 
$g\in C^{\beta}(M)$ and let $m\geq n+1$. There is a constant 
$\delta_0>0$ such that, if
$u\in C^2(M;\R^m)$ and $\alpha$ satisfy
$$
\|u^\sharp e-g\|_{C^0}\leq \delta_0^2 \qquad\mbox{and}\qquad
0<\alpha<\min\left\{\frac{1}{1+2(n+1)n_*},\frac{\beta}{2}\right\}\, ,
$$ 
then there exists a map $v\in C^{1,\alpha} (M; \R^{m})$ with 
$$
v^\sharp e \;=\; g\qquad\mbox{and}
\qquad \|v-u\|_{C^1} \;\leq\; C\|u^\sharp e-g\|_{C^0}^{1/2}\, .
$$
\end{theorem}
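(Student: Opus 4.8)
The plan is to reduce the global result (Theorem~\ref{t:global}) to the local one (Theorem~\ref{t:local}) by a partition-of-unity argument, decomposing the compact manifold $M^n$ into finitely many coordinate patches on which the metric error can be added back in successively. First I would observe that, since $M$ is compact, one can cover $M$ by finitely many charts $\varphi_k\colon U_k\subset M\to \Omega_k\subset\R^n$, $k=1,\dots,N$, subordinate to which there is a smooth partition of unity $\sum_k\theta_k^2\equiv 1$ with $\supp\theta_k\Subset U_k$. The point of squaring is that each $\theta_k^2 h$ can itself be realised as a pullback metric of a one-dimensional "primitive" map, which is precisely the mechanism that drives convex integration; but here the cleaner route is to use the already-proved local theorem as a black box. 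Writing $h:=g-u^\sharp e$ for the metric deficit (a symmetric $2$-tensor of small $C^0$ norm, though not necessarily positive), I would split $h=\sum_{k=1}^N \theta_k^2\, h$ and correct the map in $N$ stages: having produced $u_{k-1}$ with $u_{k-1}^\sharp e = u^\sharp e + \sum_{j<k}\theta_j^2 h$, I apply a local construction in the chart $U_k$ to add the increment $\theta_k^2 h$, obtaining $u_k$ with $u_k^\sharp e = u_{k-1}^\sharp e+\theta_k^2 h$ and $\|u_k-u_{k-1}\|_{C^1}\le C\|h\|_{C^0}^{1/2}$, while $u_k=u_{k-1}$ outside $\supp\theta_k$. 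After $N$ steps $v:=u_N$ satisfies $v^\sharp e=g$ and $\|v-u\|_{C^1}\le C\|h\|_{C^0}^{1/2}$, which is the assertion.

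The technical heart is the single-stage correction, i.e. adapting Theorem~\ref{t:local} to add a \emph{compactly supported, not necessarily positive} increment $\theta_k^2 h$ rather than realising a full positive metric $g$. I would handle this as follows. Since $h$ has small $C^0$ norm and $u_{k-1}^\sharp e$ is close to a fixed positive $g_0$ (uniformly, because the number of stages is finite and each stage perturbs the metric by at most $C\|h\|^{1/2}$ — so one needs $\|h\|_{C^0}$ small in terms of $N$), the intermediate "target" $g':=u_{k-1}^\sharp e+\theta_k^2 h$ is a genuine Riemannian metric of class $C^\beta$ on $U_k$ lying within the radius $r$ of $g_0$ from Theorem~\ref{t:local}, and $u_{k-1}$ is $C^2$ with $\|u_{k-1}^\sharp e-g'\|_{C^0}=\|\theta_k^2 h\|_{C^0}\le\delta_0^2$ after shrinking $\delta_0$. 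Applying the local theorem in the chart $\Omega_k$ produces $\tilde v\in C^{1,\alpha}(\overline{\Omega_k};\R^m)$ with $\tilde v^\sharp e=g'$ and $\|\tilde v-u_{k-1}\|_{C^1}\le C\|\theta_k^2h\|_{C^0}^{1/2}$. The remaining issue is matching: outside $\supp\theta_k$ we need $u_k=u_{k-1}$ exactly, whereas the local theorem only gives $C^1$-closeness. This is the standard difficulty and is resolved by inspecting the iteration behind Theorem~\ref{t:local}: the convex-integration stages only modify $u$ where the metric increment $\theta_k^2 h$ is nonzero, so one can arrange $\tilde v=u_{k-1}$ on a neighbourhood of $\partial(\supp\theta_k)$, after which $u_k$ is defined by gluing $\tilde v$ on $U_k$ to $u_{k-1}$ elsewhere; equivalently, one re-runs the local proof with the boundary condition built in, exploiting that the added metric and all its corrections vanish near $\partial\,\supp\theta_k$.

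I expect the main obstacle to be precisely this \textbf{localization of the correction}: ensuring that the map output by the local theorem agrees identically with the previous map outside the support of the bump, and that the codimension-one construction still closes up with the extra constraint that the added tensor is only positive \emph{semi}definite (vanishing where $\theta_k=0$). In the global statement this is also why $m\ge n+1$ suffices but the admissible Hölder exponent degrades from $\frac{1}{1+2n_*}$ to $\frac{1}{1+2(n+1)n_*}$: one does not realise the full metric in one chart but must perform up to $n+1$ rounds of single-strand corrections per stage (a decomposition of a symmetric tensor into $n_*$ rank-one pieces, grouped so that at most $n+1$ have overlapping supports by a coloring argument on the cover), and each extra round costs a factor in the regularity bookkeeping of the iteration — each added primitive metric must be smoothed at a slightly coarser scale to absorb the commutator errors from the previous ones. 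Concretely, after reducing to the local theorem, the estimate $\|v-u\|_{C^1}\le C\|u^\sharp e-g\|_{C^0}^{1/2}$ follows by summing the $N$ increments and using the triangle inequality, with $C$ depending on $M$, $g$, and the fixed cover but not on $u$; the subtlety is only that $\delta_0$ must be chosen after $N$, so that the accumulated $C^1$ perturbations never push the intermediate metrics out of the neighbourhood where the local theorem applies, and the Hölder norm of $v$ is controlled stage by stage since each stage's output lies in $C^{1,\alpha}$ with the asserted $\alpha$.
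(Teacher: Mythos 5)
Your plan is to apply Theorem \ref{t:local} chart by chart, producing a sequence $u_0=u, u_1,\dots, u_N=v$ where each $u_k$ \emph{exactly} adds the increment $\theta_k^2 h$ and agrees with $u_{k-1}$ off $\supp\theta_k$. This breaks down at the very first step, for two independent reasons.

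\textbf{Regularity collapse.} Theorem \ref{t:local} takes $u\in C^2$ and produces $v\in C^{1,\alpha}$ — nothing better. After the first chart you therefore hold a map $u_1$ that is only $C^{1,\alpha}$, and the target metric for the second chart, $g'=u_1^\sharp e+\theta_2^2 h$, involves first derivatives of $u_1$, so it is only $C^\alpha$. But Theorem \ref{t:local} requires the \emph{source} to be $C^2$ and the target metric to be $C^\beta$, with the achievable exponent bounded by $\beta/2$. So by chart $2$ your hypotheses are already out of reach, and the admissible H\"older exponent would degrade geometrically with each chart. You cannot simply mollify $u_1$ to restore $C^2$: mollifying a $C^{1,\alpha}$ map with $\alpha<1/2$ at scale $\ell$ introduces a metric error of order $\ell^{2\alpha}$ that has to be re-absorbed, and the $C^2$ norm blows up like $\ell^{\alpha-1}$, so this is not a harmless fix but exactly the iteration the paper is trying to control.

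\textbf{Localization does not hold.} You assert that the convex-integration construction only modifies $u$ where the increment is nonzero, so that one can arrange $\tilde v=u_{k-1}$ near $\partial\supp\theta_k$. This is false for the stage procedure behind Theorem \ref{t:local}: in Proposition \ref{p:stage}, Step 1 replaces $u$ by $u*\varphi_\ell$, which alters $u$ \emph{everywhere}, and Step 2 rescales $u_0=(1+Cr^{-1}\delta^2)^{-1/2}\tilde u$, again a global change. The single-step operation (Proposition \ref{p:step}) does have the property $v=u$ where $a=0$, because $\Gamma(0,\cdot)\equiv 0$, but this is destroyed by the mollification and rescaling that are essential to avoiding the loss of derivative and to making the rank-one amplitudes well defined. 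So ``re-running the local proof with the boundary condition built in'' is not an inspection; it is a genuine obstruction.

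The paper avoids both problems by \emph{interleaving}: the global object Proposition \ref{p:stage2} performs \emph{one} stage on all of $M$ simultaneously, as a map $C^2\to C^2$, and only the infinite composition of stages leaves $C^2$. Inside a stage, the metric deficit is decomposed globally as $\sum_{j}\sum_{i=1}^{n_*}(\psi_j a_{i,j})^2\nu_{i,j}\otimes\nu_{i,j}$ using a minimal cover colored in $n+1$ families of pairwise disjoint sets; same-color patches are corrected in parallel via Proposition \ref{p:step2}, yielding $(n+1)n_*$ serial one-dimensional oscillations per stage, which is where the exponent $(1+2(n+1)n_*)^{-1}$ comes from. You do gesture at precisely this coloring argument, so you have the right ingredient, but it must be deployed \emph{inside} a single $C^2\to C^2$ stage, not as a way to chain full applications of the local theorem.
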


\begin{corollary}[Global $h$--principle]\label{c:global}
Let $(M^n,g)$ and $\alpha$ be as in Theorem \ref{t:global}. Given any
short map $u\in C^1(M;\R^{m})$ with $m\geq n+1$ and any $\e>0$ there exists 
an isometric immersion $v\in C^{1,\alpha}(M;\R^{m})$ with $\|u-v\|_{C^0}\leq \e$.
\end{corollary}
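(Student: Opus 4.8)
The plan is to deduce Corollary~\ref{c:global} from Theorem~\ref{t:global} by a now-standard two-step scheme: first perturb the given short map so that it becomes \emph{strictly} short (i.e.\ $u^\sharp e < g$ with a definite gap), then interpolate along a one-parameter family of metrics so that we may apply the theorem on a metric whose defect $\|u^\sharp e - g\|_{C^0}$ is as small as the threshold $\delta_0^2$ requires.

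First I would smooth and shrink. Given the short map $u\in C^1(M;\R^m)$ and $\e>0$, I mollify $u$ to obtain $u_1\in C^\infty(M;\R^m)$ with $\|u-u_1\|_{C^0}\le \e/2$; since shortness is an open condition and $u$ is strictly short after a further contraction, $u_1$ can be taken strictly short, i.e.\ $g - u_1^\sharp e \ge c\,\mathrm{Id}$ for some $c>0$ in the sense of quadratic forms, while still $\|u-u_1\|_{C^0}\le\e/2$. (Here one uses that an arbitrary short map can first be replaced by $(1-\eta)u$ composed with coordinates, or equivalently that $g-u^\sharp e\ge 0$ can be made $\ge c\,\mathrm{Id}$ at the cost of an arbitrarily small $C^0$ perturbation, because $M$ is compact.)

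Second, I would run the standard ``staircase'' of Theorem~\ref{t:global}. The idea is to write $g - u_1^\sharp e$ as a sum of many small pieces and add them in one at a time. Concretely, pick $N$ large and consider the metrics $g_k = u_1^\sharp e + \tfrac{k}{N}(g - u_1^\sharp e)$ for $k=0,\dots,N$, so $g_0 = u_1^\sharp e$ and $g_N = g$, and $\|g_{k+1}-g_k\|_{C^0}\le \tfrac1N\|g-u_1^\sharp e\|_{C^0}$; also each $g_k$ lies in $C^\beta(M)$. Starting from $v_0 := u_1$, which satisfies $v_0^\sharp e = g_0$ exactly hence trivially $\|v_0^\sharp e - g_1\|_{C^0}\le \delta_0^2$ for $N$ large, Theorem~\ref{t:global} applied with target metric $g_1$ produces $v_1\in C^{1,\alpha}$ with $v_1^\sharp e = g_1$ and $\|v_1-v_0\|_{C^1}\le C\|v_0^\sharp e-g_1\|_{C^0}^{1/2}\le C(\tfrac1N\|g-u_1^\sharp e\|_{C^0})^{1/2}$. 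Iterating, at step $k$ we have $v_k^\sharp e = g_k$ exactly, so again $\|v_k^\sharp e - g_{k+1}\|_{C^0}\le\tfrac1N\|g-u_1^\sharp e\|_{C^0}\le\delta_0^2$ for $N$ large, and we obtain $v_{k+1}$ with $v_{k+1}^\sharp e=g_{k+1}$ and $\|v_{k+1}-v_k\|_{C^0}\le C N^{-1/2}\|g-u_1^\sharp e\|_{C^0}^{1/2}$. Summing the telescoping estimate over $k=0,\dots,N-1$ gives $\|v_N - u_1\|_{C^0}\le C N^{1/2}N^{-1/2}\|g-u_1^\sharp e\|_{C^0}^{1/2}$, which is \emph{not} small; so instead I would not use a uniform partition but a geometrically shrinking one, e.g.\ choose $g_{k+1}-g_k$ so that the defects decay like $2^{-k}\delta_0^2$ after the first step. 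Then $\sum_k\|v_{k+1}-v_k\|_{C^0}\le C\sum_k (2^{-k}\delta_0^2)^{1/2} = C'\delta_0$, and by choosing $\delta_0$ (equivalently the size of the first increment, or simply rescaling) small we make the total displacement $\le\e/2$. Passing to the limit $v := \lim v_k$ in $C^1$ (the sequence is Cauchy in $C^1$, hence converges in $C^1$; and since each $v_k\in C^{1,\alpha}$ with uniformly controlled H\"older seminorm along the tail, the limit is in $C^{1,\alpha}$) yields $v^\sharp e = g$ and $\|v-u\|_{C^0}\le\|v-u_1\|_{C^0}+\|u_1-u\|_{C^0}\le\e$, as desired.

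The main obstacle is the bookkeeping of the two competing requirements in the staircase: each application of Theorem~\ref{t:global} demands the current metric defect be below the fixed threshold $\delta_0^2$, while the $C^0$ (and $C^{1,\alpha}$) displacements must sum to something small and the H\"older norms must stay bounded in the limit. The displacement bound $\|v-u\|_{C^1}\le C\|u^\sharp e-g\|_{C^0}^{1/2}$ provided by the theorem is exactly what makes a geometric (rather than arithmetic) subdivision close up: the square-root makes $\sum(2^{-k})^{1/2}$ a convergent geometric series. One must also check that $\delta_0$ in Theorem~\ref{t:global} can be taken uniform along the family $\{g_k\}$ — this is fine because all $g_k$ are convex combinations of two fixed $C^\beta$ metrics, so their $C^\beta$ norms and ellipticity constants are uniformly bounded, and inspecting the statement, $\delta_0$ depends only on such data. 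Finally, the $C^{1,\alpha}$ regularity of the limit requires that the tail sum $\sum_{k\ge k_0}\|v_{k+1}-v_k\|_{C^{1,\alpha}}$ be finite, or at least that the $C^{1,\alpha}$ norms be uniformly bounded along the tail; if Theorem~\ref{t:global} only delivers a $C^{1,\alpha}$ bound depending on the metric (not an increment bound in $C^{1,\alpha}$), one argues that the $v_k$ converge in $C^1$ and are bounded in $C^{1,\alpha}$, whence the limit lies in $C^{1,\alpha}$ by Arzel\`a--Ascoli / lower semicontinuity of the H\"older seminorm. This last point is the only genuinely delicate part and is where I would be most careful.
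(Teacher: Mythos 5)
Your proposal takes a genuinely different route from the paper's, and that route has a fatal gap.

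The paper's proof is a two-step argument but of a different kind: first apply the \emph{Nash--Kuiper theorem} (together with a standard regularization and a truncation of the iteration at a finite step) to produce a single $C^2$ map $u_0$ with $\|u-u_0\|_{C^0}\le\e/2$ and $\|u_0^\sharp e-g\|_{C^0}\le(\e/2C)^2\le\delta_0^2$; then apply Theorem~\ref{t:global} \emph{once} to $u_0$, which already yields the $C^{1,\alpha}$ isometric immersion $v$ with $\|v-u_0\|_{C^1}\le\e/2$. There is no iteration of Theorem~\ref{t:global} at all; the role of closing the bulk of the metric gap from a merely short map is delegated entirely to Nash--Kuiper.

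Your staircase replaces Nash--Kuiper with repeated applications of Theorem~\ref{t:global} itself, and this is where it breaks. Theorem~\ref{t:global} requires the input map to be of class $C^2$, while its output $v$ is only $C^{1,\alpha}$. Thus after your first application, $v_1\in C^{1,\alpha}$ is \emph{not} admissible as the input $u$ at the next step, and the iteration cannot be continued without an intermediate regularization. Such a regularization would reintroduce a metric defect of size roughly $\ell^{2\alpha}\|v_k\|_{1,\alpha}^2$ (by the commutator estimate \eqref{e:mollify3}), and you have no control on $\|v_k\|_{1,\alpha}$ across steps, so there is no obvious choice of $\ell$ that keeps the defect below $\delta_0^2$ while keeping $\|\tilde v_k\|_2$ (and hence the constants) under control. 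The second, more minor issue you already flag yourself: even granting the staircase, passing to the $C^{1,\alpha}$ limit requires a uniform bound on $\|v_k\|_{1,\alpha}$, which Theorem~\ref{t:global} does not supply. The clean way to close both gaps is exactly the paper's: produce one good $C^2$ approximately isometric map via Nash--Kuiper, and then invoke Theorem~\ref{t:global} a single time.
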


\begin{remark}\label{r:emb} 
In both corollaries, if $u$ is an embedding,
then there exists a corresponding $v$ which in addition
is an embedding. 
\end{remark}

\subsection{Rigidity for large exponents}\label{s:results2}
The following is a crucial estimate on the metric
pulled back by standard regularizations of a given map.

\begin{proposition}
[Quadratic estimate]\label{p:improv}
Let $\Omega\subset \R^n$ be an open set, 
$v\in C^{1, \alpha} (\Omega, \R^m)$ with
$v^\sharp e \in C^{2}$ and
$\varphi\in C^\infty_c (\R^n)$ a standard symmetric
convolution kernel. Then, for every compact set 
$K\subset \Omega$, 
\begin{equation}\label{e:improv}
\|(v*\varphi_\ell)^\sharp e - v^\sharp e\|_{C^{1} (K)} \;=\;  
O (\ell^{2\alpha -1}).
\end{equation}
\end{proposition}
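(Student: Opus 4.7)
The plan is to exploit an algebraic commutator identity that separates the error $(v*\varphi_\ell)^\sharp e - v^\sharp e$ into a piece reflecting the $C^2$ regularity of $v^\sharp e$ and a bilinear commutator piece reflecting the $C^{0,\alpha}$ regularity of $\nabla v$. Since $\partial_i(v*\varphi_\ell) = (\partial_i v)*\varphi_\ell$, each component reads
\begin{equation*}
[(v*\varphi_\ell)^\sharp e]_{ij} - (v^\sharp e)_{ij} \;=\; (f*\varphi_\ell)(g*\varphi_\ell) - fg
\end{equation*}
(summed over the ambient index) with $f=\partial_i v$ and $g=\partial_j v$. I would then use the decomposition
\begin{equation*}
(f*\varphi_\ell)(g*\varphi_\ell) - fg \;=\; \bigl((fg)*\varphi_\ell - fg\bigr) \;-\; \bigl((fg)*\varphi_\ell - (f*\varphi_\ell)(g*\varphi_\ell)\bigr).
\end{equation*}

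The first summand is the standard mollification error of $(v^\sharp e)_{ij}\in C^2$: symmetry of $\varphi$ yields $O(\ell^2)$ in $C^0$ and $O(\ell)$ in $C^1$, which is absorbed in $O(\ell^{2\alpha-1})$ since $\alpha\leq 1$. The second summand is the bilinear commutator, for which the key step is the symmetric double-integral representation
\begin{equation*}
(fg)*\varphi_\ell(x) - (f*\varphi_\ell)(g*\varphi_\ell)(x) = \tfrac{1}{2}\iint \varphi_\ell(x-y)\varphi_\ell(x-z)\bigl(f(y)-f(z)\bigr)\bigl(g(y)-g(z)\bigr)\,dy\,dz,
\end{equation*}
obtained by a short algebraic manipulation using $\int\varphi_\ell=1$. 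The H\"older regularity of $f,g$ gives $|(f(y)-f(z))(g(y)-g(z))|\leq C\ell^{2\alpha}$ on the support of the kernels, so this term is $O(\ell^{2\alpha})$ in $C^0$; for the $C^1$ bound I differentiate in $x$ inside the integral, and each factor $\partial_k\varphi_\ell$ costs an $\ell^{-1}$, producing $O(\ell^{2\alpha-1})$.

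The main obstacle---or rather, the key idea---is precisely to avoid differentiating the H\"older factors $f,g$ themselves, which would only give $O(\ell^{\alpha-1})$ and miss the quadratic scaling. By distributing all derivatives onto the mollifiers in the bilinear representation, one harvests both H\"older differences $f(y)-f(z)$ and $g(y)-g(z)$, producing the quadratic gain $\ell^{2\alpha}$ responsible for the exponent $2\alpha-1$ in the statement. This commutator bound is presumably one of the estimates proved in Section \ref{s:convolutions}, so the proof reduces to combining such a statement with the elementary mollifier estimate for the first summand.
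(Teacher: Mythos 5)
Your decomposition of $(v*\varphi_\ell)^\sharp e - v^\sharp e$ into a mollification error of $v^\sharp e$ plus a bilinear commutator is exactly the one the paper uses, and the way you dispose of the two pieces (mollification error of a $C^2$ function gives $O(\ell)$ in $C^1$; commutator gives $O(\ell^{2\alpha-1})$ in $C^1$ by harvesting both H\"older differences) matches the paper's proof, which simply invokes \eqref{e:mollify2} and \eqref{e:mollify3} from Lemma \ref{l:mollify}. Where you differ from the paper is in how you would prove the commutator estimate itself. The paper's Lemma \ref{l:mollify} establishes \eqref{e:mollify3} via a Leibniz-rule expansion of $\partial^a[\varphi_\ell*(fg) - (\varphi_\ell*f)(\varphi_\ell*g)]$ followed by the trick of inserting the constants $f(x)$, $g(x)$ so that each convolution sees only a H\"older increment. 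You instead propose the symmetric double-integral identity
\begin{equation*}
(fg)*\varphi_\ell(x) - (f*\varphi_\ell)(g*\varphi_\ell)(x) = \tfrac{1}{2}\iint \varphi_\ell(x-y)\varphi_\ell(x-z)\bigl(f(y)-f(z)\bigr)\bigl(g(y)-g(z)\bigr)\,dy\,dz,
\end{equation*}
which is correct (the cross-terms collapse using $\int\varphi_\ell = 1$) and gives the $C^0$ bound $O(\ell^{2\alpha})$ immediately; for $C^1$, differentiating in $x$ produces two terms each with a single $\partial_k\varphi_\ell$ factor, costing one $\ell^{-1}$, so $O(\ell^{2\alpha-1})$ follows. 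This route is slightly cleaner for the two derivatives you need here and makes the ``quadratic gain'' transparent at a glance; the paper's Leibniz version, on the other hand, delivers the general estimate $\|\cdot\|_r \le C\ell^{2\alpha-r}\|f\|_\alpha\|g\|_\alpha$ for all $r$ in one stroke, which is needed elsewhere in the paper. Both are valid; the content is the same.

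One small inaccuracy worth noting: you attribute the $O(\ell)$ bound in $C^1$ for the first summand to the symmetry of $\varphi$. Symmetry only buys you the extra power in the $C^0$ bound ($O(\ell^2)$ instead of $O(\ell)$); the $C^1$ bound $O(\ell)$ comes just from the Lipschitz continuity of $\partial_k(v^\sharp e)$. Since $O(\ell)$ is all you need (dominated by $O(\ell^{2\alpha-1})$ because $\alpha<1$), this does not affect the result.
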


In particular, fix a map $u$ and a kernel
$\varphi$ satisfying the assumptions of
the Proposition with $\alpha>1/2$.
Then the Christoffel symbols of $(v*\varphi_\ell)^\sharp e$
converge to those of $v^\sharp e$. This corresponds to the results of Borisov in
\cite{Borisov58-1,Borisov58-2}, and hints at the
absence of $h$--principle for $C^{1,\frac{1}{2}+\e}$ immersions.
Relying mainly on this estimate
we can give a fairly short proof of Borisov's theorem:

\begin{theorem}\label{t:2/3}
Let $(M^2,g)$ be a surface with $C^2$ metric and positive Gauss curvature, and let
$u\in C^{1,\alpha}(M^2;\R^3)$ be an isometric immersion 
with $\alpha>2/3$. Then $u (M)$ is a surface
of bounded extrinsic curvature.
\end{theorem}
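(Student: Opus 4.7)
The plan is to use the quadratic estimate of Proposition~\ref{p:improv} on smooth mollifications of $u$ and then transfer the classical Gauss--Bonnet identity to the limit. In a local chart (patched globally via a partition of unity) I would set $u_\ell := u * \varphi_\ell$; for $\ell$ small enough $u_\ell$ is a smooth immersion of $M$, and Proposition~\ref{p:improv} applied with $v=u$ gives
\[
\| g_\ell - g \|_{C^1(K)} \;=\; O(\ell^{2\alpha-1}) \;\to\; 0,
\]
where $g_\ell := u_\ell^\sharp e$. Since $\alpha>2/3>1/2$, the Christoffel symbols of $g_\ell$ converge uniformly to those of $g$, and since $u_\ell$ is smooth Gauss's Theorema Egregium identifies the extrinsic Gauss--Kronecker curvature of $u_\ell(M)$ with the intrinsic Gauss curvature $K(g_\ell)$ of the induced metric.

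The core estimate to establish is the uniform bound $\sup_\ell \int_M |K(g_\ell)|\,dA_{g_\ell} < \infty$. For the signed integral, the Gauss--Bonnet formula applied on any $g$-geodesic disk $D \subset M$ reads
\[
\int_D K(g_\ell)\, dA_{g_\ell} \;=\; 2\pi \,-\, \int_{\partial D} \kappa_{g_\ell}(\partial D)\, dl_{g_\ell},
\]
whose right-hand side is continuous as a functional of $(g_\ell,\partial g_\ell)$ restricted to $\partial D$; hence $\int_D K(g_\ell)\,dA_{g_\ell}\to \int_D K(g)\,dA_g$, and summing over a fine geodesic triangulation of $M$ gives a uniform bound on the signed total. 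To promote this to the unsigned integral, I would write $g_\ell = g * \varphi_\ell + \rho_\ell$. Because $g\in C^2$, $g*\varphi_\ell \to g$ in $C^2$ and hence $K(g*\varphi_\ell)\to K(g)\geq c_0>0$ uniformly. The remainder $\rho_\ell$ is the convolution commutator of the bilinear operation $u \mapsto \partial_i u\cdot \partial_j u$; a Constantin--E--Titi-type identity, combined with the control in Proposition~\ref{p:improv}, should allow one to split $K(g_\ell)-K(g*\varphi_\ell)$ into a divergence (integrating consistently with the boundary term above) plus a pointwise $o(1)$ piece. This would force $K(g_\ell)\geq c_0/4>0$ off an asymptotically negligible set, and consequently $\int_M |K(g_\ell)|\,dA_{g_\ell}\to 2\pi\chi(M)$.

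The main obstacle is precisely this last splitting: $\|\partial^2\rho_\ell\|_{C^0}$ blows up naively like $\ell^{2\alpha-2}$, so pointwise control of $K(g_\ell)-K(g*\varphi_\ell)$ is not automatic and requires exploiting the specific commutator structure of $\rho_\ell$ --- which is exactly the quadratic cancellation captured by Proposition~\ref{p:improv}. Once the uniform integral bound is in hand, the $C^1$ convergence $u_\ell \to u$ yields uniform convergence of the Gauss maps $\nu_\ell\to \nu$, and a standard semicontinuity argument on the spherical images lets one pass to the limit and conclude that $u(M)$ is a surface of bounded extrinsic curvature.
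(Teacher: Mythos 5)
Your strategy is genuinely different from the paper's, and the gap you flag near the end is indeed fatal as stated; the proposal does not close it, and I do not think it can be closed in the form you describe. You aim to show that $K(g_\ell)$ is pointwise (or near-pointwise) bounded below by a positive constant off a small set, writing $g_\ell = g*\varphi_\ell + \rho_\ell$ and hoping that the commutator structure of $\rho_\ell$ gives pointwise control on $K(g_\ell)-K(g*\varphi_\ell)$. But the Gauss curvature depends on the full Hessian of the metric, and Proposition~\ref{p:improv} only gives $\|g_\ell - g\|_{C^1} = O(\ell^{2\alpha-1})$. Nothing in the commutator estimate prevents $\partial^2 g_\ell$ from oscillating with amplitude $\ell^{2\alpha-2}\to\infty$; $K(g_\ell)$ can therefore change sign on sets of area $O(1)$. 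The later reduction you invoke, from a uniform bound on $\int|K(g_\ell)|\,dA_\ell$ to bounded extrinsic curvature of the $C^1$ limit, is also nontrivial (it amounts to a semicontinuity statement for the spherical-image measure under $C^1$ convergence), but the decisive gap is the sign control on $K(g_\ell)$.

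The paper's proof avoids pointwise control of curvature entirely. It works with the \emph{weak}, degree-theoretic formulation of total curvature: it proves (Proposition~\ref{p:ChangeOfVar}, via Proposition~\ref{p:conv}) that the change-of-variables identity
\[
\int_V f(N)\,\kappa\,dA \;=\; \int_{\S^2} f(y)\,\deg(y,V,N)\,d\sigma
\]
persists for $C^{1,\alpha}$ immersions with $\alpha>2/3$. The key move is to write $\kappa$ via \eqref{e:expression} as a term linear in $\partial_{kl}g_{ij}$ with constant coefficients plus a quadratic term in $\partial g$, and to integrate the second-order term \emph{once by parts} against the test function $\psi^\e = f(N^\e)(\det g^\e)^{-1/2}$. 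This produces the product $\partial_k\psi^\e\cdot(\partial_l g^\e_{ij}-\partial_l g_{ij})$, of size $O(\e^{\alpha-1})\cdot O(\e^{2\alpha-1}) = O(\e^{3\alpha-2})$, which vanishes precisely when $\alpha>2/3$. The positivity of $\kappa$ then forces $\deg(\cdot,V,N)\geq 0$ and $\deg(\cdot,V,N)\in L^1$, and the inequality $\deg(\cdot,V,N)\geq \mathbf{1}_{N(V)\setminus N(\partial V)}$ combined with Lemma~\ref{l:nullset} yields the bound on $\sum_i|N(E_i)|$. In short, the paper's argument transfers the $2\alpha-1$ gain from Proposition~\ref{p:improv} into an \emph{integrated} estimate through a single integration by parts, rather than trying to upgrade it to a pointwise bound on second derivatives; that is the idea missing from your proposal.
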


This leads to the following corollaries, which follow from the 
work of Pogorelov and Sabitov. 

\begin{corollary}\label{c:2/3}
Let $(S^2, g)$ be a closed surface with $g\in C^2$ and positive Gauss curvature, and let
$u\in C^{1,\alpha} (S^2; \R^3)$ be an isometric immersion
with $\alpha>2/3$. Then, $u (S^2)$ is the boundary of
a bounded convex set and any two such images are congruent.
In particular if the Gauss curvature is constant,
then $u(S^2)$ is the boundary of a ball $B_r (x)$.
\end{corollary}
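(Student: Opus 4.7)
The plan is to reduce the corollary to Theorem~\ref{t:2/3} combined with the classical results of Pogorelov and Sabitov that are recalled as items (1)--(3) at the end of the introduction. First I would observe that the hypotheses $g\in C^2$, positive Gauss curvature, and $u\in C^{1,\alpha}(S^2;\R^3)$ with $\alpha>2/3$ are exactly those of Theorem~\ref{t:2/3}. Hence $u(S^2)$ is a closed $C^1$ surface of bounded extrinsic curvature; since $u$ is an isometric immersion, the intrinsic Gauss curvature of $u(S^2)$ coincides with that of $g$ and is therefore positive. Pogorelov's convexity theorem (item (1), \cite{Pogorelov73}) then applies and shows that $u(S^2)$ is a convex surface, so in particular it bounds a bounded convex body in $\R^3$.

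For the congruence statement, any two such isometric immersions $u_1,u_2\in C^{1,\alpha}(S^2;\R^3)$ produce, by the argument above, closed convex surfaces $u_1(S^2)$ and $u_2(S^2)$ carrying the same induced metric $g$. Pogorelov's rigidity theorem for closed convex surfaces (item (2), \cite{PogorelovRigidity}) then furnishes a rigid motion of $\R^3$ mapping $u_1(S^2)$ onto $u_2(S^2)$, which is the asserted congruence.

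Finally, when the Gauss curvature is a positive constant $\kappa$, the Riemannian manifold $(S^2,g)$ is isometric to the round sphere of radius $r=1/\sqrt{\kappa}$, and the standard inclusion $\partial B_r(0)\hookrightarrow\R^3$ realises this metric as an isometric embedding with image $\partial B_r(0)$. Applying the rigidity just established to the pair $u$ and this standard embedding yields a rigid motion sending $u(S^2)$ onto $\partial B_r(0)$, whence $u(S^2)=\partial B_r(x)$ for some $x\in\R^3$.

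The only substantive analytic input is Theorem~\ref{t:2/3} itself; once bounded extrinsic curvature has been secured, the corollary is essentially a bookkeeping application of the three classical Pogorelov--Sabitov results cited in the introduction, and no new obstacle arises at this step.
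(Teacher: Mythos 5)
Your proof is correct and takes essentially the same approach as the paper: Theorem~\ref{t:2/3} gives bounded extrinsic curvature, Pogorelov's convexity theorem (cited in the paper as Theorem~9, p.~650 of \cite{Pogorelov73}) upgrades this to convexity, and Pogorelov's rigidity theorem \cite{PogorelovRigidity} gives the congruence. Your explicit treatment of the constant-curvature case via comparison with the round sphere is the intended reading, even though the paper leaves it implicit.
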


\begin{corollary}\label{c:localconvexity}
Let $\Omega\subset\R^2$ be open and $g\in C^{2,\beta}$ a metric on $\Omega$ with
positive Gauss curvature. Let $u\in C^{1,\alpha}(\Omega;\R^3)$ be an isometric immersion 
with $\alpha>2/3$. Then $u(\Omega)$ is $C^{2,\beta}$ and locally 
uniformly convex (that is, for every $x\in \Omega$ there 
exists a neighborhood $V$
such that $u(\Omega)\cap V$ is the graph of a $C^{2, \beta}$ function with
positive definite second derivative).
\end{corollary}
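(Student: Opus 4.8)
The plan is to reduce to a local, embedded picture and then string together Theorem~\ref{t:2/3}, Pogorelov's structure theory for surfaces of bounded extrinsic curvature, and the Pogorelov--Sabitov regularity theorem quoted in the introduction. First I would fix $x_0\in\Omega$ and pick a ball $B=B_\rho(x_0)$ with $\overline{B}\subset\Omega$ so small that $u|_B$ is a $C^{1,\alpha}$ embedding (every $C^1$ immersion is locally an embedding); the metric $g|_B$ is still $C^{2,\beta}$, in particular $C^2$, and has positive Gauss curvature. Applying Theorem~\ref{t:2/3} to $(B,g|_B)$ then gives that $S:=u(B)$ is a surface of bounded extrinsic curvature.

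Next I would upgrade ``bounded extrinsic curvature'' to ``locally convex''. The input here is Pogorelov's theory (see \cite{Pogorelov73}): for a $C^1$ surface of bounded extrinsic curvature the extrinsic curvature (the signed area of the spherical image, viewed as a set function) coincides with the intrinsic curvature of the induced metric. Since $u$ is an isometric immersion, the intrinsic curvature of $S$ is the Gauss curvature of $g$, a strictly positive continuous function; hence the extrinsic curvature of $S$ is a positive measure, so $S$ is locally saddle-free and therefore locally convex. Concretely, after shrinking $B$ and composing with a rigid motion of $\R^3$, $S$ is, near $u(x_0)$, the graph $\{(y,f(y)):y\in U\}$ of a convex function $f:U\subset\R^2\to\R$ over its tangent plane at $u(x_0)$.

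Now $S$, as a convex surface, carries $(B,g|_B)$ as its intrinsic metric, and this metric is $C^{2,\beta}$ with positive curvature; so by the Pogorelov--Sabitov regularity theorem (item (3) of the introduction, \cite{Pogorelov73,Sabitov}) $S$ is of class $C^{2,\beta}$, i.e.\ $f\in C^{2,\beta}(U)$. Convexity of $f$ gives $D^2f\geq 0$, and since $S$ is now a genuine $C^{2,\beta}$ surface its Gauss curvature equals that of $g$ at the corresponding point, so the graph identity $\det D^2f=K_g\,(1+|Df|^2)^2$ forces $\det D^2f>0$; a positive semidefinite symmetric $2\times 2$ matrix with positive determinant is positive definite, hence $D^2f>0$. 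Taking $V$ a sufficiently small neighbourhood of $u(x_0)$ then exhibits $u(\Omega)\cap V$ as the graph of a $C^{2,\beta}$ function with positive definite second derivative, which is the claim.

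The only genuinely delicate point is the second step: one has to invoke the local part of Pogorelov's theory of surfaces of bounded extrinsic curvature --- the identification of extrinsic and intrinsic curvature, and the resulting local convexity when the curvature measure is positive --- and apply it to a $C^1$ surface with boundary rather than to a closed surface as in the statement quoted in the introduction. The localization that makes $u$ an embedding, the regularity bootstrap, and the final linear-algebra argument from the Gauss equation are all routine.
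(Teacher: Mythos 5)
Your overall scaffolding — localize so $u$ is an embedding, apply Theorem~\ref{t:2/3} to get bounded extrinsic curvature, invoke Pogorelov--Sabitov to get $C^{2,\beta}$ regularity, and finish with the Gauss equation to upgrade $D^2 f\ge 0$ to $D^2 f>0$ — agrees with the paper's plan, and the first and last of these reductions are indeed routine. The gap sits exactly where you flag it yourself: the passage from ``bounded extrinsic curvature with positive intrinsic curvature'' to ``locally convex.'' You state this as a one-line consequence of Pogorelov's theory (``so $S$ is locally saddle-free and therefore locally convex''), but the paper explicitly remarks that it could not find a citable version of this implication for open surfaces, and the entire appendix is devoted to supplying it. In particular, Pogorelov's structure results apply to points that are \emph{regular} for the Gauss map (isolated preimages) and \emph{elliptic}; neither property is automatic for a $C^1$ surface, and establishing them, and then globalising, is where the work lies.

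Concretely, the paper's argument has three ingredients you do not supply. First, using $\int_{\mathbb{S}^2} n(z)\,d\sigma<\infty$ (a consequence of bounded extrinsic curvature) together with Corollary~\ref{c:ChangeOfVar2} and Lemma~\ref{l:nullset}, it shows that the set $\Omega_r$ of regular points is \emph{dense} — a nontrivial fact that uses the degree formula established in Proposition~\ref{p:ChangeOfVar}. Second, the inequality $\deg(\cdot,V,N)\ge \mathbf{1}_{N(V)\setminus N(\partial V)}$ is used to show that the local index of the Gauss map at every regular point is $\ge 1$, so that Pogorelov's lemma makes these points elliptic and gives local convexity only near points of the dense set $\Omega_r$. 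Third — and this is the subtlest step, the paper's Step~3 — one must upgrade convexity near a dense set to convexity near every point, which requires a quantitative lower bound $\rho(y)$ on the radius of convexity; this is obtained by a maximality argument over convex caps $V\times\,]-a,a[\,$ using the \emph{a priori} $C^{2,1/2}$ estimate from Sabitov. So the Sabitov regularity input is not a clean post-processing step as in your outline but is interleaved into the convexity proof itself. As written, your proposal asserts the key implication rather than proving it, and the localization to surfaces with boundary that you call ``delicate'' is precisely what breaks the direct citation.
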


\subsection{Connections to the Euler equations}
There is an interesting analogy between isometric immersions in low codimension (in particular the Weyl problem) and the incompressible Euler equations. In \cite{us} a method, which is very closely related to convex integration, was introduced 
to construct highly irregular energy-dissipating solutions of the Euler equations. Being in conservation form, the ''expected'' regularity space for convex integration for the Euler equations should be $C^0$. This is still beyond reach, and in \cite{us} a weak version of convex integration was applied instead, to produce solutions in $L^{\infty}$ (see also \cite{us2} for a slightly better space) and, moreover, to show that a weak version of the $h$-principle holds. 

Nevertheless, just like for isometric immersions, for the Euler equations there is particular interest to go beyond $C^0$: in \cite{Onsager} L.~Onsager, motivated by the phenomenon of anomalous dissipation in turbulent flows, conjectured that there exist weak solutions of the Euler equations of class $C^{\alpha}$ with $\alpha<1/3$ which dissipate energy, whereas for $\alpha>1/3$ the energy is conserved. The latter was proved in \cite{Eyink,CET}, but on the construction of energy-dissipating weak solutions nothing is known beyond $L^{\infty}$ (for previous work see \cite{Scheffer, Shnirelman1,Shnirelman2}). It should be mentioned that the critical exponent $1/3$ is very natural - it agrees with the scaling of the energy cascade predicted by Kolmogorov's theory of turbulence (see for instance \cite{Frisch}). 

For the analogous problem for isometric immersions there does
not seem to be a universally accepted critical exponent (c.f. Problem 27 of \cite{Yau}), even though 1/2 seems likely (c.f. section \ref{s:results2} and the discussion in \cite{Borisov2004}).
In fact, the regularization and the commutator estimates used
in our proof of Proposition \ref{p:improv} and Theorem \ref{t:2/3} have been inspired by
(and are closely related to)
the arguments of \cite{CET}.

\section{Estimates on convolutions: Proof of
Proposition \ref{p:improv}}\label{s:convolutions}

As usual, we denote the norm on the H\"older space $C^{k,\alpha}(\overline{\Omega})$ by 
$$
\|f\|_{k,\alpha}:=\sup_{x\in\Omega}\sum_{|a|\leq k}|\partial^af(x)|+\sup_{x,y\in\Omega,\,x\neq y}\sum_{|a|=k}\frac{|\partial^af(x)-\partial^af(y)|}{|x-y|^{\alpha}}\,.
$$
Here $k=0,1,2,\dots$, $a=(a_1,\dots,a_n)$ is a multi-index with $|a|=a_1+\dots+a_n$ and $\alpha\in [0,1[$. For simplicity we will also use the abbreviation
$\|f\|_{k}=\|f\|_{k,0}$ and $\|f\|_{\alpha}=\|f\|_{0,\alpha}$. 
%We will allow $\alpha=1$ with the convention that $\|\cdot\|_{k,1}=\|\cdot\|_{k+1,0}$.

Recall the following interpolation inequalities for these norms:
$$
\|f\|_{k,\alpha}\leq C\|f\|_{k_1,\alpha_1}^{\lambda}\|f\|_{k_2,\alpha_2}^{1-\lambda},
$$
where $C$ depends on the various parameters, $0<\lambda<1$ and 
$$
k+\alpha=\lambda(k_1+\alpha_1)+(1-\lambda)(k_2+\alpha_2).
$$
The following estimates are well known and play 
a fundamental role in both the constructions and the proof of rigidity.
\begin{lemma}\label{l:mollify}
Let  $\varphi\in C^\infty_c(\R^n)$ be symmetric and such that $\int\varphi=1$. Then for any $r,s\geq 0$ and $\alpha\in ]0,1]$ we have
\begin{equation}\label{e:mollify1}
\|f*\varphi_\ell\|_{r+s}\leq C\ell^{-s}\|f\|_r,
\end{equation} 
\begin{equation}\label{e:mollify2}
\|f-f*\varphi_\ell\|_r \leq C\ell^{2}\|f\|_{r+2},
\end{equation}
\begin{equation}\label{e:mollify3}
\|(fg)*\varphi_\ell-(f*\varphi_\ell)(g*\varphi_\ell)\|_r\leq C\ell^{2\alpha -r}\|f\|_\alpha\|g\|_\alpha.
\end{equation}
\end{lemma}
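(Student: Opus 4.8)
The three estimates \eqref{e:mollify1}, \eqref{e:mollify2} are completely standard and I would dispatch them quickly; the real content is the commutator estimate \eqref{e:mollify3}. For \eqref{e:mollify1}, I would write $\partial^a(f*\varphi_\ell)=f*\partial^a\varphi_\ell$ whenever $|a|>r$ is needed, and use $\partial^a\varphi_\ell(x)=\ell^{-n-|a|}(\partial^a\varphi)(x/\ell)$, so that $\|\partial^a\varphi_\ell\|_{L^1}=\ell^{-|a|}\|\partial^a\varphi\|_{L^1}$; distributing derivatives so that at most $r$ of them fall on $f$ and the remaining $s$ on $\varphi_\ell$, and using Young's inequality together with the fact that $\int\varphi_\ell=1$, gives the bound $C\ell^{-s}\|f\|_r$ (with the Hölder seminorm handled by the same argument applied to difference quotients). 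For \eqref{e:mollify2}, I would Taylor expand: writing $f(x)-(f*\varphi_\ell)(x)=\int(f(x)-f(x-y))\varphi_\ell(y)\,dy$ and using that $\varphi$ is symmetric (so the first-order term integrates to zero), the remainder is $O(\ell^2)$ times $\|f\|_{r+2}$ after again distributing any of the $r$ derivatives and using $\int|y|^2|\varphi_\ell(y)|\,dy=C\ell^2$.

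The main step is \eqref{e:mollify3}. The key identity I would use is the pointwise commutator formula
$$
\bigl((fg)*\varphi_\ell\bigr)(x)-(f*\varphi_\ell)(x)\,(g*\varphi_\ell)(x)
=\int\bigl(f(x-y)-f*\varphi_\ell(x)\bigr)\bigl(g(x-y)-g*\varphi_\ell(x)\bigr)\varphi_\ell(y)\,dy
-R(x),
$$
where $R(x)=\bigl((f-f*\varphi_\ell(x))*\varphi_\ell\bigr)(x)\cdot 0$ — more precisely, expanding the product $(f(x-y)-a)(g(x-y)-b)$ with $a=f*\varphi_\ell(x)$, $b=g*\varphi_\ell(x)$ and integrating against $\varphi_\ell(y)\,dy$, the cross terms $-a\,g*\varphi_\ell(x)-b\,f*\varphi_\ell(x)+ab$ collapse to exactly $-(f*\varphi_\ell)(g*\varphi_\ell)(x)$, leaving the stated quadratic expression with no error term at all. (This is the discrete analogue of the Constantin–E–Titi commutator, which the authors reference in connection with \cite{CET}.) Then I would estimate $|f(x-y)-f*\varphi_\ell(x)|\le \|f\|_\alpha\,(|y|^\alpha + \ell^\alpha\|\varphi\|_{L^1}^{\alpha}\cdots)$ — cleanly, $|f(x-y)-f(x)|\le \|f\|_\alpha |y|^\alpha$ and $|f(x)-f*\varphi_\ell(x)|\le \|f\|_\alpha\int|z|^\alpha|\varphi_\ell(z)|\,dz = C\|f\|_\alpha\ell^\alpha$, and on the support of $\varphi_\ell$ one has $|y|\lesssim \ell$, so both factors are $O(\ell^\alpha\|f\|_\alpha)$ and $O(\ell^\alpha\|g\|_\alpha)$ respectively; integrating against $\varphi_\ell$ (mass $1$) yields the $r=0$ case $C\ell^{2\alpha}\|f\|_\alpha\|g\|_\alpha$.

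For general $r$, I would differentiate the commutator identity $r$ times in $x$ and use the Leibniz rule: each derivative either hits a factor $f(x-y)-f*\varphi_\ell(x)$ (turning it, after also possibly differentiating through, into a bounded-order derivative paired with a $\partial\varphi_\ell$ that contributes an extra $\ell^{-1}$), or hits the kernel $\varphi_\ell(y)$ directly, again costing $\ell^{-1}$ per derivative via $\partial^a\varphi_\ell = \ell^{-n-|a|}(\partial^a\varphi)(\cdot/\ell)$. The crucial bookkeeping point — and the part I expect to be the only real obstacle — is to check that one can \emph{always arrange} for the two ``difference'' factors to retain their $O(\ell^\alpha)$ smallness: whenever a derivative lands on $f(x-y)$ it must be compensated by a derivative on $\varphi_\ell$ (integration by parts in $y$) so that one still sees a genuine difference $f(x-y)-f(x)$ rather than $\partial f(x-y)$, preserving the Hölder gain. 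Carrying this out carefully, every one of the $r$ derivatives contributes exactly one factor $\ell^{-1}$, both difference factors keep a factor $\ell^\alpha\|f\|_\alpha$ and $\ell^\alpha\|g\|_\alpha$, and the total is $C\ell^{2\alpha-r}\|f\|_\alpha\|g\|_\alpha$, as claimed. I would present this by first doing $r=0$ in detail and then indicating the inductive/Leibniz step for $r\ge 1$.
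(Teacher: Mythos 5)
Your plan is correct in spirit and rests on the same two ingredients as the paper's proof (a commutator identity that exposes a product of H\"older differences, plus the mean-zero property $\int\partial^b\varphi_\ell=0$ for $b\neq 0$), but the order of operations differs, and the step you describe as ``the crucial bookkeeping point'' for $r\geq 1$ is exactly where the real work sits and is left unverified. The paper does not differentiate a CET-type integral. It takes $\partial^a$ of the commutator directly, getting by Leibniz $\partial^a\varphi_\ell*(fg)-\sum_{b\leq a}\binom{a}{b}(\partial^b\varphi_\ell*f)(\partial^{a-b}\varphi_\ell*g)$, and \emph{then} replaces $f,g$ by $f-f(x),g-g(x)$ throughout: the frozen constants $f(x),g(x)$ drop out term by term because $\int\partial^b\varphi_\ell=0$ for $b\neq 0$, and the surviving $b\in\{0,a\}$ contributions cancel against the corresponding pieces of the first term. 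This yields the closed-form identity $\partial^a[\mathrm{comm}]=\partial^{a}\varphi_{\ell}*[(f-f(x))(g-g(x))]-\sum_{b\leq a}\binom{a}{b}\,\partial^{b}\varphi_{\ell}*(f-f(x))\cdot\partial^{a-b}\varphi_{\ell}*(g-g(x))$ in one stroke, after which the estimate is one line, since each factor $\partial^{b}\varphi_{\ell}*(f-f(x))$ is $O(\ell^{\alpha-|b|})\|f\|_\alpha$. Your route --- write the single integral $\int(f(x-y)-f*\varphi_\ell(x))(g(x-y)-g*\varphi_\ell(x))\varphi_\ell(y)\,dy$, differentiate in $x$, and integrate by parts in $y$ to move derivatives onto the kernel --- does work, but be aware that for $|a|\geq 2$ the result is \emph{not} the single term $\int FG\,\partial^a\varphi_\ell\,dy$: extra cross products $(f*\partial^b\varphi_\ell)(g*\partial^{a-b}\varphi_\ell)$ with $0<b<a$ genuinely appear, and they must be identified and bounded separately (which again relies on the mean-zero trick and gives the same $\ell^{2\alpha-r}$, but is not visible in your sketch). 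So the proposal is sound; the paper's ``differentiate first, then shift'' device is the more economical implementation and worth internalizing, since it produces all the cross terms automatically and avoids any integration by parts.
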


\begin{proof}
For any multi-indices $a,b$ with $|a|=r, |b|=s$ we have $\partial^{a+b}(f*\varphi_\ell)=\partial^{a} f*\partial^{b} \varphi_\ell$, hence
$$
|\partial^{a+b}(f*\varphi_\ell)|\leq C_s\ell^{-s}\|f\|_r.
$$
This proves \eqref{e:mollify1}. 

Next, by considering the Taylor expansion of $f$ at $x$ we see that
$$
f(x-y)-f(x)=f'(x)y+r_x(y),
$$
where $\sup_x|r_x(y)|\leq C|y|^2\|f\|_2$. Moreover, since $\varphi$ is symmetric, 
$$
\int \varphi_\ell(y) y\, dy \;=\; 0\, .
$$ 
Thus,
\begin{equation*}
\begin{split}
|f-f*\varphi_\ell|&\;=\; \left|\int \varphi_\ell(y)(f(x-y)-f(x))dy\right|\\
&\;\leq\; C\|f\|_2\int \ell^{-n}\left|\varphi\left(\frac{y}{\ell}\right)\right||y|^{2}dy
\quad=\quad C\ell^{2}\|f\|_2\, .
\end{split}
\end{equation*}
This proves \eqref{e:mollify2} for the case $r=0$. To obtain the estimate for general $r$, repeat the same argument for the partial derivatives
$\partial^a f$ with $|a|=r$.

For the proof of estimate \eqref{e:mollify3} let $a$ be any multi-index 
with $|a|=r$. By the product rule
\begin{equation*}
\begin{split}
\partial^{a}\bigl[\varphi_{\ell}*(fg)-&(\varphi_{\ell}*f)(\varphi_{\ell}*g)\bigr]=\\
=&\partial^{a}\varphi_{\ell}*(fg)-\sum_{b\leq a}\begin{pmatrix}a\\ b\end{pmatrix}(\partial^{b}\varphi_{\ell}*f)(\partial^{a-b}\varphi_{\ell}*g)\\
=&\partial^{a}\varphi_{\ell}*(fg)-(\partial^{a}\varphi_{\ell}*f)(\varphi_{\ell}*g)+(\varphi_{\ell}*f)(\partial^{a}\varphi_{\ell}*g)\\
&-\sum_{0<b<a}\begin{pmatrix}a\\ b\end{pmatrix}[\partial^{b}\varphi_{\ell}*(f-f(x))][\partial^{a-b}\varphi_{\ell}*(g-g(x))]\\
=&\partial^{a}\varphi_{\ell}*[(f-f(x))(g-g(x))]\\
&-\sum_{b\leq a}\begin{pmatrix}a\\ b\end{pmatrix}\partial^{b}\varphi_{\ell}*(f-f(x))\cdot\partial^{a-b}\varphi_{\ell}*(g-g(x)),
\end{split}
\end{equation*}
where we have used the fact that 
$$
\partial^{a}\varphi_\ell*f(x)=\begin{cases}
f(x)&\textrm{ if }a=0,\\
0&\textrm{ if }a\neq 0.
\end{cases}
$$
Now observe that
\begin{equation*}
\begin{split}
|\partial^{a}\varphi_{\ell}*&[(f-f(x))(g-g(x))]|\\
&=\;\left|\int \partial^{a}\varphi_\ell(y)(f(x-y)-f(x))(g(x-y)-g(x))dy\right|\\
&\leq\; \int |\partial^{a}\varphi_\ell(y)||y|^{2\alpha}dy\,\|f\|_{\alpha}\|g\|_{\alpha}
\;=\; C_r\,\ell^{2\alpha-r}\|f\|_{\alpha}\|g\|_{\alpha}.
\end{split}
\end{equation*}
Similarly, all the terms in the sum over $b$ obey the same estimate. This 
concludes the proof of \eqref{e:mollify3}.
\end{proof}

\begin{proof}[Proof of Proposition \ref{p:improv}]
Set $g:= v^\sharp e$ and $g^\ell:= (v*\varphi_\ell)^\sharp e$. We have
$$
\|g^\ell_{ij} - g_{ij}\|_{1}\;\leq\; 
\|g^\ell_{ij} - g_{ij}* \varphi_\ell\|_{1}
+ \|g_{ij}*\varphi_\ell - g_{ij}\|_{1}\, .
$$
The first term can be written as
\begin{equation*}
\|g^\ell_{ij} - g_{ij}*\varphi_\ell\|_{1} =
\left\|\partial_j v*\varphi_\ell \cdot 
\partial_i v * \varphi_\ell 
- (\partial_j v \cdot \partial_i v)*\varphi_\ell\right\|_{1},
\end{equation*}
so that \eqref{e:mollify3} applies, to yield the bound $\ell^{2\alpha-1}\|v\|_{1,\alpha}^2$. For the second term \eqref{e:mollify2} gives the bound
$\ell\|g\|_{2}$. Combining these two we obtain  
$$
\|g^\ell_{ij} - g_{ij}\|_{k}\;\leq\;
C(\ell^{2\alpha-1}\|v\|_{1,\alpha}^2+\ell\|g\|_{2})\, ,
$$
from which \eqref{e:improv} readily follows.
\end{proof}

\section{$h$--principle: The general scheme}\label{s:overview}

The general scheme of our construction follows the method of Nash and Kuiper \cite{Nash54,Kuiper55}. For convenience of the reader we sketch this scheme in this section. Assume for simplicity that $g$ is smooth.

The existence theorems are based on an iteration of {\it stages}, and each {\it stage} consists of several {\it steps}. The purpose of a {\it stage} is to correct the error $g-u^\sharp e$. In order to achieve this correction, the error is decomposed into a sum of primitive metrics as
\begin{eqnarray*}
g-u^\sharp e &=& \sum_{k=1}^{n_*}a_k^2\nu_k\otimes\nu_k\quad\textrm{ (locally)}\\
g-u^\sharp e &=& \sum_j\sum_{k=1}^{n_*}(\psi_ja_{j,k})^2\nu_{j,k}\otimes\nu_{j,k}\quad\textrm{ (globally)}
\end{eqnarray*}
The natural estimates associated with this decomposition are
\begin{equation*}
\begin{split}
\|&a_k\|_0\;\sim\;\|g-u^\sharp e\|_0^{1/2}\\
\|&a_k\|_{N+1}\;\sim\;\,\|u\|_{N+2}\quad\textrm{ for }N=0,1,2,\dots.
\end{split}
\end{equation*}
A {\it step} then involves adding one primitive metric. In other words the goal of a {\it step} is the metric change
$$
u^\sharp e\quad\mapsto\quad u^\sharp e+a^2\nu\otimes\nu.
$$
Nash used spiralling perturbations (also known as the "Nash twist") to achieve this; for the codimension one case Kuiper replaced the spirals by corrugations. Using the same ansatz (see formula \eqref{e:ansatz}) one easily checks that addition of a primitive metric is possible with the following estimates (see Proposition \ref{p:step}):
\begin{eqnarray*}
\textrm{$C^0$-error in the metric}\;&\sim&\;\|g-u^\sharp e\|_0\,\frac{1}{K}\\
\textrm{ increase of $C^1$-norm of $u$}\;&\sim&\;\|g-u^\sharp e\|_0^{1/2}\\
\textrm{ increase of $C^2$-norm of $u$}\;&\sim&\;\|u\|_2\,K
\end{eqnarray*}
for any $K\geq 1$. Observe that the first two of these estimates is essentially the same as in \cite{Nash54,Kuiper55}. Furthermore, the third estimate is only valid modulo a "loss of derivative" (see Remark \ref{r:lossofderivative}). 

The low codimension forces the steps to be performed serially. This is in contrast with the method of K\"allen in \cite{Kallen}, where the whole {\it stage} can be performed in one step due to the high codimension. Thus the number of {\it steps} in a {\it stage} equals the number of primitive metrics in the above decomposition which interact. This equals $n_*$ for the local construction and $(n+1)n_*$ for the global construction. 
To deal with the "loss of derivative" problem we mollify the map $u$ at the start of every stage, in a similar manner as is done in a Nash-Moser iteration. Because of the quadratic estimate \eqref{e:mollify3} in Lemma \ref{l:mollify} there will be no additional error coming from the mollification. 
Therefore, iterating the estimates for one step over a single stage (that is, over $N_*$ steps) leads to 
\begin{eqnarray*}
\textrm{$C^0$-error in the metric}\;&\sim&\;\|g-u^\sharp e\|_0\,\frac{1}{K}\\
\textrm{ increase of $C^1$-norm of $u$}\;&\sim&\;\|g-u^\sharp e\|_0^{1/2}\\
\textrm{ increase of $C^2$-norm of $u$}\;&\sim&\;\|u\|_2\,K^{N_*}
\end{eqnarray*}
With these estimates, iterating over the {\it stages} leads to exponential convergence of the metric error, leading to a controlled growth of the $C^1$ norm and an exponential growth of the $C^2$ norm of the map. In particular, interpolating between these two norms leads to convergence in $C^{1,\alpha}$ for $\alpha<\frac{1}{1+2N_*}$.

\section{$h$--principle: Construction step}\label{s:step}

The main step of our construction is given by the following
proposition. 
%From now on, given an isometric immersion $u:
%\Omega\to \R^m$, we denote by $u^\sharp e$ the pull--back
%through $u$ of the standard euclidean metric. 

\begin{proposition}[Construction step]\label{p:step}
Let $\Omega\subset \R^n$, $\nu\in S^{n-1}$ and $N\in \N$. 
Let $u\in C^{N+2}(\overline{\Omega};\R^{n+1})$ and $a\in C^{N+1}(\overline{\Omega})$. Assume that
$\gamma\geq 1$ and $\ell,\delta\leq 1$ are constants such that 
\begin{eqnarray}
\frac{1}{\gamma}I\;\leq \;&u^\sharp e &\;\leq \gamma I \quad\textrm{ in }\Omega,\label{e:nondeg}\\
\|a\|_0 &\leq& \delta,\label{e:dmmu}\\
\|u\|_{k+2}+\|a\|_{k+1} &\leq& \delta\ell^{-(k+1)}\,\textrm{ for }k=0,1,\dots,N\label{e:conditiona}.
\end{eqnarray}
Then, for any
\begin{equation}\label{e:lambda}
 \lambda\;\ge\;\ell^{-1}
\end{equation}
there exists $v\in C^{N+1}(\overline{\Omega};\R^{n+1})$ such that
\begin{equation}\label{e:esth2}
\left\|v^\sharp e-(u^\sharp e + a^2  \nu\otimes \nu)\right\|_0 \le
C\;\frac{\delta^2}{\lambda\ell}
\end{equation}
and
\begin{equation}\label{e:esth3}
\left\|u-v\right\|_{j}\le C\;\delta\; \lambda^{j-1}\quad \textrm{ for }j=0,1,\dots,N+1,
\end{equation}
where $C$ is a constant depending only on $n,N$ and $\gamma$.
\end{proposition}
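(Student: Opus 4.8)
The plan is to follow the Nash--Kuiper ``corrugation'' ansatz in the codimension-one setting. First I would reduce to a normal form: by the nondegeneracy assumption \eqref{e:nondeg}, locally there is a unit normal field $\xi\in C^{N+1}(\overline\Omega;\R^{n+1})$ to the immersion $u$, with $\partial_i u\cdot\xi=0$ and $|\xi|=1$; the estimates on $\xi$ are controlled by those on $u$ via \eqref{e:conditiona} and the smoothness of the Gram--Schmidt/cross-product construction (the constant absorbing the dependence on $\gamma$). Then I would define
\begin{equation*}
v(x)\;=\;u(x)\;+\;\frac{1}{\lambda}\Bigl[\Gamma_1\!\bigl(x,\lambda\,x\cdot\nu\bigr)\,\frac{\nabla u\,\nu}{|\nabla u\,\nu|}\;+\;\Gamma_2\!\bigl(x,\lambda\,x\cdot\nu\bigr)\,\xi(x)\Bigr],
\end{equation*}
where $\Gamma_1(x,s),\Gamma_2(x,s)$ are $2\pi$-periodic in $s$, chosen so that the ``fast'' part of $v^\sharp e-u^\sharp e$ produces exactly $a^2\,\nu\otimes\nu$ to leading order. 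Concretely one picks $\Gamma_1,\Gamma_2$ from the classical corrugation profile so that, writing $t=a(x)/|\nabla u\,\nu|$ and averaging over the fast variable, $\partial_s\Gamma_1$ and $\partial_s\Gamma_2$ satisfy $\langle(1+\partial_s\Gamma_1)^2+(\partial_s\Gamma_2)^2\rangle = 1+t^2$ and the cross terms average to zero; this is the step where the amplitude $\delta$ (bounding $a$ via \eqref{e:dmmu}) enters, and smoothness of $\Gamma_i$ in $(x,s)$ with the right bounds follows from the implicit function theorem applied on the compact parameter range $\|a\|_0\le\delta\le1$.

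Next I would compute $v^\sharp e$ by the product/chain rule, separating derivatives that hit the fast variable $\lambda x\cdot\nu$ (which contribute a factor $\lambda$ and are $O(1)$ after the prefactor $1/\lambda$) from derivatives that hit the slow variable $x$ (which contribute extra factors bounded by $\ell^{-1}\le\lambda$). The leading term reproduces $u^\sharp e+a^2\nu\otimes\nu$ exactly after using the defining identities for $\Gamma_i$; every remaining term carries at least one slow derivative and hence a factor $\|u\|_2+\|a\|_1+\|\xi\|_1\lesssim \delta\ell^{-1}$ together with a compensating $\lambda^{-1}$, giving the bound $C\delta^2/(\lambda\ell)$ in \eqref{e:esth2}. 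The $C^1$ estimate $\|u-v\|_1\le C\delta$ in \eqref{e:esth3} for $j=0,1$ is immediate from the ansatz: the $1/\lambda$ prefactor cancels the $\lambda$ from differentiating the fast variable, and the amplitude is $\lesssim\|a\|_0+\text{(slow-derivative terms times }\lambda^{-1})\lesssim\delta$. For $j=0$ one gains an extra $\lambda^{-1}$; for $2\le j\le N+1$ each additional derivative on the fast variable costs a factor $\lambda$, while slow derivatives cost $\ell^{-1}\le\lambda$ and the $C^{k+2}$-bounds \eqref{e:conditiona} on $u$ (and $C^{k+1}$ on $a$, $\xi$) supply the needed $\delta\ell^{-(k+1)}\le\delta\lambda^{k}$, which multiplied by the $\lambda^{-1}$ prefactor yields $C\delta\lambda^{j-1}$.

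The main obstacle I expect is bookkeeping rather than conceptual: one must verify that in \emph{every} term of the expansion of $\|v^\sharp e - (u^\sharp e+a^2\nu\otimes\nu)\|_0$, the total power of $\lambda$ is $\le -1$ and the total ``slow'' weight is $\le \delta^2\ell^{-1}$, using \eqref{e:lambda} to trade each $\ell^{-1}$ for a $\lambda$ and each $\lambda$ above the first against the prefactor. The delicate points are (i) that the cross term $\partial_i u\cdot\partial_j(\text{perturbation})$ has vanishing fast-average thanks to the choice of $\Gamma_i$ and to $\partial_i u\cdot\xi=0$, so it too contributes only through slow derivatives, and (ii) controlling $\|\xi\|_{k+1}$ in terms of the data, for which one differentiates the relations $\partial_i u\cdot\xi=0$, $|\xi|^2=1$ and inverts the resulting linear system using \eqref{e:nondeg}, the constant depending only on $n,N,\gamma$ as claimed. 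Finally, $v\in C^{N+1}$ because $u\in C^{N+2}$, $a\in C^{N+1}$, $\xi\in C^{N+1}$ and $\Gamma_i$ are smooth, so differentiating the ansatz $N+1$ times is licit and the bounds above are finite.
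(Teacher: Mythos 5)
Your overall plan (Nash--Kuiper corrugation, fast/slow bookkeeping, interpolation for the intermediate $C^j$ norms) is the right one and matches the paper's strategy, but two parts of the ansatz as you wrote it do not actually produce the required leading-order cancellation, and this is not mere bookkeeping.

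First, the tangential direction. You modulate in the direction $\nabla u\,\nu/|\nabla u\,\nu|$, but then the cross term in $\nabla v^T\nabla v$ involves $\nabla u^T(\nabla u\,\nu)=(u^\sharp e)\nu$, which is \emph{not} a multiple of $\nu$ unless $u^\sharp e$ is already proportional to the identity (the hypothesis only gives $\gamma^{-1}I\le u^\sharp e\le\gamma I$). Consequently the leading-order contribution to $v^\sharp e-u^\sharp e$ is not pure $\nu\otimes\nu$, and the off-$\nu\otimes\nu$ residual scales like $\partial_t\Gamma_1\sim a^2\sim\delta^2$ independently of $\lambda$, so \eqref{e:esth2} cannot be reached for $\lambda\gg\ell^{-1}$. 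The paper avoids this by taking the tangential direction to be $\xi=\nabla u\,(\nabla u^T\nabla u)^{-1}\nu$ (suitably normalized), for which $\nabla u^T\xi=\nu$ exactly; this is precisely what makes the cross term a multiple of $\nu\otimes\nu$.

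Second, the constraint on $\Gamma$. You impose the corrugation identity only on average over the fast variable, $\langle(1+\partial_s\Gamma_1)^2+(\partial_s\Gamma_2)^2\rangle=1+t^2$, and also claim ``the cross terms average to zero.'' But the $C^0$ norm of an oscillating residual equals its amplitude; if the identity only holds on average, the leading term $A^TA$ is equal to $\nabla u^T\nabla u+a^2\nu\otimes\nu$ only after averaging, and the pointwise discrepancy is $O(\delta^2)$, again not decaying in $\lambda$. The identity $\bigl|\partial_t\Gamma(s,t)+e_1\bigr|^2=1+s^2$ must hold \emph{pointwise} in $t$ (this is \eqref{e:Gamma1} in the paper), so that $A^TA=\nabla u^T\nabla u+a^2\nu\otimes\nu$ with no error, and all of the $\delta^2/(\lambda\ell)$ error comes from the slow-derivative terms $E_1,E_2$ that carry an honest $\lambda^{-1}$. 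An averaging condition \emph{is} needed, but for a different purpose and on a different quantity: one needs $\int_0^{2\pi}\partial_t\Gamma\,dt=0$ so that $\Gamma$ itself is $2\pi$-periodic, hence bounded; in the paper this is achieved by tuning the corrugation amplitude via the Bessel-type function $J_0$ in Lemma \ref{l:Gamma}. If you fix these two points (correct tangential field, pointwise identity for $\Gamma$ plus a separate periodicity condition), the rest of your outline — the chain/product rule expansion, the trade of $\ell^{-1}$ for $\lambda$ using \eqref{e:lambda}, and interpolation for $1\le j\le N$ — goes through as in the paper.
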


\begin{remark}\label{r:lossofderivative}
Observe that if \eqref{e:esth3} would hold for $j=N+2$, then the conclusion of the proposition
would say essentially (with $N=0$) that the equation
$$
v^\sharp e=u^\sharp e + a^2  \nu\otimes \nu
$$
admits approximate solutions in $C^{2}$ with estimates
\begin{eqnarray*}
\|v^\sharp e-(u^\sharp e + a^2  \nu\otimes \nu)\|_0 & \leq
&C\;\delta^2\frac{1}{K},\\
\left\|u-v\right\|_{2}&\leq& C\;\|u\|_{2}\,K.
\end{eqnarray*}
Here $K=\lambda\ell\geq 1$. The fact that \eqref{e:esth3}
holds only for $j\le N+1$ amounts to a 
"loss of derivative" in the estimate. 
\end{remark}

In the higher codimension case we need an additional
technical assumption in order to carry on the same 
result. As usual the oscillation ${\rm osc}\, u$ of a 
vector-valued map $u$
is defined as $\sup_{x,y} |u(x)-u(y)|$.

\begin{proposition}[Step in higher codim.]\label{p:step2}
Let $m, n,N\in\N$ with $n,N\ge 1$ and $m\geq n+1$. 
Then there exist a
constant $\eta_0>0$ with the following property.
Let $\Omega$, $g$, $a$, $\nu$ and $u\in C^{2+N}
(\overline{\Omega}, \R^m)$ satisfy the assumptions of
Proposition \ref{p:step} and assume in addition 
${\rm osc}\, \nabla u\leq \eta_0$. 
Then there exists a map $v\in C^{1+N} (\overline{\Omega},
\R^m)$ satisfying the same
conclusion as in Proposition \ref{p:step}.
\end{proposition}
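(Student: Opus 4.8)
The plan is to \emph{deduce Proposition \ref{p:step2} from Proposition \ref{p:step}} by an orthogonal change of coordinates in the target $\R^m$ which uses the hypothesis ${\rm osc}\,\nabla u\le\eta_0$ in an essential way. The idea is that the $m-n-1$ ``extra'' target directions can be left untouched: after a rotation one writes $u=(\tilde u,w)$ with $\tilde u\colon\Omega\to\R^{n+1}$ an immersion and $\|\nabla w\|_0$ as small as desired, one performs the codimension-one construction of Proposition \ref{p:step} on $\tilde u$, and one glues $w$ back in without modifying it.

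First I would pick any $x_0\in\overline{\Omega}$ and choose $R\in O(m)$ so that $R$ maps the tangent plane ${\rm Im}\,\nabla u(x_0)$ into $\R^n\times\{0\}^{m-n}$. Writing $Ru=(\tilde u,w)$, where $\tilde u$ collects the first $n+1$ components and $w$ the last $m-n-1$, this choice forces $\nabla w(x_0)=0$. Since $R$ is an isometry, for every $x$ we get $|\nabla w(x)|=|\nabla w(x)-\nabla w(x_0)|\le|\nabla u(x)-\nabla u(x_0)|\le{\rm osc}\,\nabla u\le\eta_0$, hence $\|\nabla w\|_0\le\eta_0$ and therefore $\|w^\sharp e\|_0\le C_n\eta_0^2$. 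As $(Ru)^\sharp e=u^\sharp e$, this gives $\tilde u^\sharp e=u^\sharp e-w^\sharp e$, so that choosing $\eta_0$ small enough (depending only on $n$ and on the constant $\gamma$ from \eqref{e:nondeg}) we obtain $\tfrac1{2\gamma}I\le\tilde u^\sharp e\le 2\gamma I$; in particular $\tilde u$ is an immersion. Since $\tilde u$ is a coordinate projection of a rotation of $u$, one also has $\|\tilde u\|_{k+2}\le\|u\|_{k+2}$ for all $k$, so \eqref{e:dmmu} and \eqref{e:conditiona} continue to hold for the pair $(\tilde u,a)$, now with $2\gamma$ in place of $\gamma$, while $\ell,\delta\le 1$ and $\lambda\ge\ell^{-1}$ are unchanged.

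Next I would apply Proposition \ref{p:step} to $\tilde u$, $a$, $\nu$, $N$ (with the same $\ell,\delta,\lambda$ and ellipticity constant $2\gamma$) to obtain $\tilde v\in C^{N+1}(\overline{\Omega};\R^{n+1})$ with $\|\tilde v^\sharp e-(\tilde u^\sharp e+a^2\nu\otimes\nu)\|_0\le C\delta^2(\lambda\ell)^{-1}$ and $\|\tilde u-\tilde v\|_j\le C\delta\lambda^{j-1}$ for $j=0,\dots,N+1$, where $C=C(n,N,2\gamma)$. Then I would set $v:=R^{-1}(\tilde v,w)\in C^{N+1}(\overline{\Omega};\R^m)$. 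Using once more that $R$ is an isometry and that the $w$-block is unchanged, one has
\[
v^\sharp e=\tilde v^\sharp e+w^\sharp e,\qquad u^\sharp e+a^2\nu\otimes\nu=\tilde u^\sharp e+w^\sharp e+a^2\nu\otimes\nu ,
\]
whence $v^\sharp e-(u^\sharp e+a^2\nu\otimes\nu)=\tilde v^\sharp e-(\tilde u^\sharp e+a^2\nu\otimes\nu)$ satisfies \eqref{e:esth2}, and $\|u-v\|_j=\|Ru-Rv\|_j=\|\tilde u-\tilde v\|_j$ satisfies \eqref{e:esth3}. This gives the proposition.

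The only genuinely new ingredient is the geometric normalization of the second paragraph, and I expect it to be the main obstacle: it is precisely here that ${\rm osc}\,\nabla u\le\eta_0$ is indispensable, since without it the tangent planes of $u$ cannot be forced into a single $(n+1)$-dimensional subspace and the frozen block $w$ would contribute an uncontrolled term $w^\sharp e$ to the pulled-back metric. The remaining care is purely bookkeeping: one must make sure that the constant delivered by Proposition \ref{p:step} depends on $m$ only through $n$ and $N$ (the doubling of $\gamma$ being harmless), and that $\eta_0$ — which only has to beat $\gamma^{-1}$ up to a dimensional factor — is fixed accordingly.
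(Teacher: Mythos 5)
Your proof is correct, but it takes a genuinely different route from the paper's. The paper's proof modifies the construction \emph{inside} Proposition~\ref{p:step}: the only place where codimension one enters there is in the definition of the normal field $\zeta=\partial_1u\wedge\cdots\wedge\partial_nu$, and the paper simply replaces it by $\zeta(x)=w-\pi_xw$, where $\pi_x$ is the orthogonal projection of $\R^m$ onto the tangent plane ${\rm span}\{\partial_1u(x),\ldots,\partial_nu(x)\}$ and $w\in\S^{m-1}$ is a fixed unit vector; the oscillation hypothesis guarantees that $w$ can be chosen so that $|\pi_xw|\le 1/2$ for all $x$, whence $|\zeta|$ is bounded away from $0$ and all of \eqref{e:normalfield}--\eqref{e:atilde} hold with unchanged estimates, so the remainder of the proof of Proposition~\ref{p:step} runs verbatim. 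You instead treat Proposition~\ref{p:step} as a black box: you rotate the target so that $Ru=(\tilde u,w)$ with $\|\nabla w\|_0\le\eta_0$, apply the codimension-one result to $\tilde u$, and splice the unchanged block $w$ back in, using that $v^\sharp e-(u^\sharp e+a^2\nu\otimes\nu)=\tilde v^\sharp e-(\tilde u^\sharp e+a^2\nu\otimes\nu)$ exactly. Both arguments hinge on the same geometric fact (small oscillation of $\nabla u$ pins down a uniformly transversal normal direction), and in both $\eta_0$ implicitly depends on $\gamma$ from \eqref{e:nondeg}, which is consistent with how the proposition is used in Proposition~\ref{p:stage2}. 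What your reduction buys is that one never has to re-examine the internals of the step construction; what the paper's surgical fix buys is that it changes only a single definition and keeps the normal direction adapted pointwise to $u$. A cosmetic remark: it would be enough (and slightly more natural) to rotate ${\rm Im}\,\nabla u(x_0)$ into $\R^{n+1}\times\{0\}^{m-n-1}$ rather than into $\R^n\times\{0\}^{m-n}$, since $\tilde u$ collects $n+1$ components; either convention works since yours additionally zeroes out the $(n+1)$st row of $\nabla\tilde u(x_0)$, which causes no harm.
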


\subsection{Basic building block}
In order to prove the Proposition we need the following lemma. The function $\Gamma$ will be
our "corrugation".

\begin{lemma}\label{l:Gamma}
There exists $\delta_*>0$ and a function
$\Gamma\in C^\infty([0,\delta_*]\times\R;\R^2)$ with
$\Gamma(\delta,t+2\pi)=\Gamma(\delta, t)$ and having the following
properties:
\begin{eqnarray}
 \left|\partial_t \Gamma(s,t) +e_1\right|^2 &=& 1 
+ s^2\, ,\label{e:Gamma1}\\
|\partial_s \partial^k_t \Gamma_1(s,t)| + |\partial_t^k\Gamma(s,t)|
&\le& C_k s\label{e:Gamma2} \qquad\qquad\mbox{for $k\geq 0$.}
\end{eqnarray}
\end{lemma}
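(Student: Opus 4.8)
\textbf{Proof plan for Lemma \ref{l:Gamma}.}
The function $\Gamma$ is the engine of the Kuiper corrugation, so the plan is to write down an explicit ansatz and verify the two identities by hand. The normalization \eqref{e:Gamma1} says that the curve $t\mapsto \Gamma(s,t)+(t,0)$ has speed $\sqrt{1+s^2}$ in the $t$-variable; equivalently, writing $\Gamma=(\Gamma_1,\Gamma_2)$, we need $(1+\partial_t\Gamma_1)^2+(\partial_t\Gamma_2)^2=1+s^2$. The cleanest choice is to prescribe the \emph{direction} of the unit tangent: set
\begin{equation*}
\partial_t\Gamma_1(s,t) = \sqrt{1+s^2}\,\cos\theta(s,t)-1,\qquad
\partial_t\Gamma_2(s,t) = \sqrt{1+s^2}\,\sin\theta(s,t),
\end{equation*}
for a suitable phase $\theta(s,t)$, so that \eqref{e:Gamma1} holds identically for \emph{any} $\theta$. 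One then integrates in $t$ to define $\Gamma$, and the only real constraint is periodicity: $\int_0^{2\pi}\partial_t\Gamma_j(s,t)\,dt=0$ for $j=1,2$. Choosing $\theta(s,t)=\frac{s}{\sqrt{1+s^2}}\,\cos t$ (or some fixed smooth odd-in-something variant) one gets $\int_0^{2\pi}\sin\theta\,dt=0$ by oddness of $\theta(s,\cdot)$ about $t=\pi$, and $\int_0^{2\pi}(\sqrt{1+s^2}\cos\theta-1)\,dt$ can be killed by a further small adjustment of the amplitude of $\theta$ as a function of $s$ — or, more robustly, by subtracting from $\partial_t\Gamma_1$ its $t$-average, which does not spoil \eqref{e:Gamma1} to the order we need but makes the verification messier; I would instead simply pick $\theta$ with $\theta(s,t)=h(s)\sin t$ so that $\cos\theta$ is even and $\sin\theta$ is odd about appropriate points, arrange $\int_0^{2\pi}\cos(h(s)\sin t)\,dt = 2\pi/\sqrt{1+s^2}$ by the implicit function theorem (the left side is $2\pi$ at $h=0$ and strictly decreasing in $|h|$), and this defines $h(s)$ smoothly for $s$ in a small interval $[0,\delta_*]$.

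With $\Gamma$ so defined, I turn to the estimates \eqref{e:Gamma2}. Since $h(0)=0$ and $h$ is smooth, $h(s)=O(s)$, and all $t$-derivatives of $\theta(s,t)=h(s)\sin t$ are $O(s)$; moreover $\partial_t\Gamma_2 = \sqrt{1+s^2}\sin\theta = O(\theta)=O(s)$, and $\partial_t\Gamma_1 = \sqrt{1+s^2}\cos\theta-1 = O(s^2+\theta^2)=O(s^2)=O(s)$. Differentiating in $t$ repeatedly keeps everything $O(s)$ because each factor of $\cos\theta$ or $\sin\theta$ is bounded and each $\partial_t\theta$ is $O(s)$; integrating $\partial_t\Gamma$ in $t$ over a period to recover $\Gamma$ itself also preserves the $O(s)$ bound since the primitive is taken with zero average. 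For the $\partial_s$ part of \eqref{e:Gamma2}, note it is only asserted for $\Gamma_1$ (and its $t$-derivatives): $\partial_s\partial_t\Gamma_1 = \partial_s(\sqrt{1+s^2}\cos\theta - 1)$, and because $\cos\theta = 1 + O(s^2)$ and $\partial_s(\sqrt{1+s^2}) = s/\sqrt{1+s^2} = O(s)$ while $\partial_s\cos\theta = -\sin\theta\,\partial_s\theta = O(s\cdot 1)=O(s)$ — wait, one must check $\sin\theta\,\partial_s\theta = \sin\theta\,h'(s)\sin t$ is $O(s)$, which holds since $\sin\theta=O(s)$ and $h'$ is bounded. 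Thus $\partial_s\partial_t\Gamma_1 = O(s)$, and similarly after further $t$-differentiation; integrating once in $t$ gives the bound on $\partial_s\Gamma_1$ itself.

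\textbf{Main obstacle.} The delicate point is the interplay between the periodicity requirement and the quadratic smallness in \eqref{e:Gamma2}: one needs $\partial_t\Gamma_1$ to have zero $t$-average \emph{and} to be $O(s)$ (indeed it comes out $O(s^2)$), and it is tempting but wrong to forget that forcing zero average by brute subtraction could break \eqref{e:Gamma1}. The ansatz above sidesteps this by encoding \eqref{e:Gamma1} structurally (it holds for any phase $\theta$) and absorbing the $\Gamma_1$-average constraint into the one scalar equation defining $h(s)$ via the implicit function theorem; verifying that this $h$ is smooth on a genuine interval $[0,\delta_*]$ with $h(0)=0$ is where a little care is needed, but the monotonicity of $s\mapsto \int_0^{2\pi}\cos(h\sin t)\,dt$ makes it routine. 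Everything else is bookkeeping with Taylor expansions of $\sin$ and $\cos$ near the argument $0$.
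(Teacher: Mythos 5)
Your plan is essentially the paper's proof: prescribe the unit tangent direction $\theta(s,t)=f(s)\sin t$ so that \eqref{e:Gamma1} holds identically, reduce periodicity of $\Gamma$ to the scalar equation $J_0(f(s))=(1+s^2)^{-1/2}$ with $J_0(\tau)=\frac{1}{2\pi}\int_0^{2\pi}\cos(\tau\sin t)\,dt$, solve for $f$, and then deduce \eqref{e:Gamma2} from the vanishing of $\Gamma(0,\cdot)$ and $\partial_s\Gamma_1(0,\cdot)$ together with boundedness of all higher derivatives (by periodicity).

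There is, however, one concrete gap: you invoke the implicit function theorem to define $h(s)$ from $J_0(h(s))=(1+s^2)^{-1/2}$ and attribute the ``little care needed'' at $s=0$ to monotonicity. Monotonicity of $J_0$ near $0$ gives existence and uniqueness of $h(s)\geq 0$, but it does not give smoothness at $s=0$, which is what you actually need for \eqref{e:Gamma2}. The IFT hypothesis genuinely fails there: $J_0'(0)=0$, so the $h$-derivative of $J_0(h)-(1+s^2)^{-1/2}$ vanishes at $(s,h)=(0,0)$. The paper's fix — the one non-routine idea in this lemma — is the change of variable $r=h^2$: since $J_0$ is an even analytic function, $r\mapsto J_0(r^{1/2})$ is smooth with $\partial_r\bigl[J_0(r^{1/2})\bigr]\big|_{r=0}=J_0''(0)/2=-1/4\neq 0$, so the IFT applies to $F(s,r):=J_0(r^{1/2})-(1+s^2)^{-1/2}$ and yields a smooth $g$ with $g(0)=0$, $g'(0)=0$, $g''(0)=4$; then $f(s)=g(s)^{1/2}$ is smooth on $[0,\delta_*]$ with $f'(0)=\sqrt{2}$. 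With that repair, your $O(s)$ bookkeeping for \eqref{e:Gamma2} goes through; the paper's version is slightly slicker in that it simply notes $\partial_t^k\Gamma(0,t)=0$ and $\partial_s\partial_t^k\Gamma_1(0,t)=0$ and integrates in $s$, rather than tracking Taylor remainders term by term.
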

\begin{proof}
Define $H:\R^2\to \R^2$ as
$H(\tau,t)= (\cos(\tau\sin t), \sin(\tau\sin t))$. Then
\begin{equation}\label{e:sine}
\int_0^{2\pi} H_2(\tau,t)\, dt  \;=\; \int_0^{2\pi} 
\sin(\tau\sin t)\, dt \;=\; \int_{-\pi}^\pi \sin(\tau\sin t)\, dt
\;=\; 0
\end{equation}
by the symmetry of the sine function. Set
\begin{equation}\label{e:cosine}
J_0 (\tau) \;:=\; \frac{1}{2\pi} \int_0^{2\pi} H_1(\tau,t)\, dt  
\;=\; \frac{1}{2\pi} \int_0^{2\pi} 
\cos(\tau\sin t)\, dt\,.
\end{equation}
Note that $J_0\in C^\infty(\R)$ with $J_0(0)=1$, $J_0'(0)=0$
and $J'' (0) <0$. 
We claim that there exists $\delta>0$ and 
a function $f\in C^\infty(-\delta,\delta)$ such that  
$f(0)=0$ and 
\begin{equation}\label{e:J0}
  J_0(f(s)) \;=\; \frac{1}{\sqrt{1+s^2}}\,.
\end{equation}
This is a consequence of the implicit function theorem. To see this, set
$$
F(s,r)=J_0(r^{1/2})-(1+s^2)^{-1/2}.
$$
Then $F\in C^{\infty}(\R^2)$. Indeed, 
since the Taylor expansion of $\cos x$ contains only even
powers of $x$, $J_0 (r^{1/2})$ is obviously analytic. Moreover,
$$
J_0(r^{1/2})=\frac{1}{2\pi}\int_0^{2\pi} \left( 1-\frac{r}{2}\sin^2 t\right)\, dt 
+ O(r^2).
$$
In particular $\partial_r F(0,0)=-1/4$. Since also 
$F(0,0)=0$, the implicit funcion theorem yields $\delta>0$ 
and $g\in C^{\infty}(-\delta,\delta)$ such that $g(0)=0$ and
$$
F(s,g(s))=0.
$$
Next, observe that $\partial_s F(0,0)=0$ and $\partial^2_s F(0,0)=1$. Therefore
$$
g'(0)=0\textrm{ and }g''(0)=4.
$$
This implies that $f(s):=g(s)^{1/2}$ is also a smooth function, with
$$
f(0)=0\textrm{ and }f'(0)=\sqrt{2},
$$
thus proving our claim.

Having found $f\in C^{\infty}(-\delta,\delta)$ with $f(0)=0$ and \eqref{e:J0}, we finally set
\begin{equation*}
  \Gamma(s,t)\;:=\; 
\int_0^t \left[ \sqrt{1+s^2} H(f(s), t') - e_1
  \right] dt'\,.
\end{equation*}
By construction $|\partial_t \Gamma(s,t) +e_1|^2 = 1+s^2$. Moreover
\begin{eqnarray*}
\Gamma(s,t + 2\pi) - \Gamma (s, t) &=&
\int_t^{t+2\pi} \left[
\sqrt{1+s^2} H (f(s), t') - e_1\right]\, dt'\\
&=&\sqrt{1+s^2} \int_0^{2\pi} H (f(s), t')\, dt' - 2\pi e_1\\  
&\stackrel{\eqref{e:sine}\eqref{e:cosine}}{=}& 
2\pi e_1 \left[ \sqrt{1+s^2} J_0(f(s)) - 1\right]
\;\stackrel{\eqref{e:J0}}{=}\; 0. 
\end{eqnarray*}
Thus the function $\Gamma$ is $2\pi$-periodic in the second argument.

We now come to the estimates. Fix $\delta_*<\delta$. Then
$\Gamma\in C([0,\delta_*]\times\R;\R^2)$, and since it is periodic 
 in the second variable, $\Gamma$
and all its partial derivatives are uniformly bounded.
Straightforward computations show that for any $k=0,1,\dots$
$$
\partial^k_t \Gamma (0, t) = 0\,\textrm{ and }\, 
\partial_s \partial^k_t \Gamma_1 (0,t) = 0 \qquad\mbox{for all $t$.}
$$
Hence, integrating in $s$, we conclude that
\begin{eqnarray*}
|\partial^k_t \Gamma (s, t)|&\leq&s\, \|\partial_s
\partial^k_t \Gamma\|_0\, ,\\
|\partial_s \partial^k_t \Gamma_1 (s, t)|
&\leq& s\, \|\partial^2_s\partial^k_t\Gamma_1\|_0,
\end{eqnarray*}
which give the desired estimates.

\end{proof}

\subsection{Proof of Proposition \ref{p:step}}
Throughout the proof the letter $C$ will denote a constant, whose value might change from line to line, but otherwise depends only on $n,N$ and $\gamma$.
Fix a choice of orthonormal coordinates in $\R^n$. In these coordinates the pullback metric can be written as
$(u^\sharp e)_{ij}=\partial_iu\cdot\partial_ju$
or, denoting the matrix differential of $u$ by $\nabla u=(\partial_ju^i)_{ij}$, as
$$
u^\sharp e=\nabla u^T\nabla u.
$$
From now on we will work with this notation.

Let 
\begin{equation*}
\xi=\nabla u\cdot (\nabla u^T\nabla u)^{-1}\cdot \nu,\quad\zeta = \partial_1 u \wedge \partial_2 u 
\wedge \dots \wedge \partial_n u\,.
\end{equation*}
Because of \eqref{e:nondeg} the vectorfields $\xi,\zeta$ are well-defined and satisfy
\begin{equation}\label{e:normalfield}
\frac{1}{C}\le |\xi(x)|,\,|\zeta(x)| \le C\quad \textrm{ for }x\in\Omega
\end{equation}
with some $C\ge 1$. 
Now let
$$
\xi_1=\frac{\xi}{|\xi|^2},\quad \xi_2=\frac{\zeta}{|\xi||\zeta|},\quad \Psi (x) = \xi_1(x)\otimes e_1 + \xi_2(x) \otimes e_2,
$$
and
$$
\tilde a=|\xi|a.
$$
Then
\begin{equation}\label{e:identities}
\nabla u^T\,\Psi = \frac{1}{|\xi|^2} \nu\otimes e_1,\quad \Psi^T\Psi=\frac{1}{|\xi|^2}I,
\end{equation}
and 
\begin{equation}\label{e:atilde}
\begin{split}
\|\Psi\|_{j}&\le C\|u\|_{j+1},\\
\|\tilde a\|_{j}&\le C(\|a\|_{j}+\|a\|_0\|u\|_{j+1}),
\end{split}
\end{equation}
for $j=0,1,\dots,N+1$.
Finally, let
\begin{equation}\label{e:ansatz}
  v (x)\;:=\; u(x) + \frac{1}{\lambda} \Psi(x) \Gamma\bigl( \tilde a(x),\lambda x\cdot \nu\bigr),
\end{equation}
where $\Gamma=\Gamma(s,t)$ is the function constructed in Lemma \ref{l:Gamma}.

\medskip

{\bf Proof of \eqref{e:esth2}.} First we compute $\nabla v^T\nabla v$.
We have 
\begin{equation}\label{e:abcd}
\nabla v \;=\; \underbrace{\nabla u+\Psi \cdot \partial_t \Gamma\otimes \nu}_{A} +
\underbrace{\lambda^{-1}\, \Psi\cdot \partial_s 
\Gamma\otimes \nabla \tilde a}_{E_1} + 
\underbrace{\lambda^{-1} \,  \nabla \Psi \cdot \Gamma}_{E_2}\,.
\end{equation}
Using the notation
${\rm sym} (A)=(A + A^T)/2$ one has
\begin{equation*}
\nabla v^T\nabla v = A^TA +2 {\rm  sym}(A^TE_1+A^TE_2) +  (E_1+E_2)^T(E_1+E_2).
\end{equation*}
Using \eqref{e:identities} and \eqref{e:Gamma1}:
\begin{equation*}
\begin{split}
A^TA&=\nabla u^T\nabla u + \frac{1}{|\xi|^2}(2\partial_t\Gamma_1+|\partial_t\Gamma|^2)\nu\otimes\nu\\
&=\nabla u^T\nabla u+\frac{1}{|\xi|^2}\tilde a^2 \nu\otimes\nu=\nabla u^T\nabla u+a^2\nu\otimes \nu.
\end{split}
\end{equation*}
Next we estimate the error terms. First of all
\begin{equation*}
\begin{split}
A^TE_1&=\frac{1}{\lambda}(\nabla u^T\Psi)\, (\partial_{s}\Gamma\otimes \nabla \tilde a)+
\frac{1}{\lambda}(\nu\otimes\partial_t\Gamma) (\Psi^T\Psi) (\partial_{s}\Gamma\otimes\nabla \tilde a)\\
&=\frac{1}{\lambda|\xi|^2}\left(\partial_{s}\Gamma_1+\partial_t\Gamma\cdot\partial_{s}\Gamma\right) (\nu\otimes\nabla \tilde a).
\end{split}
\end{equation*}
Note that \eqref{e:Gamma2} together with \eqref{e:atilde} implies:
$$
\|\Gamma\|_0,\,\|\partial_t\Gamma\|_0,\,\|\partial_s\Gamma_1\|_0\,\leq\, C\,\|a\|_0.
$$
Therefore
$$
\|{\rm sym}(A^T E_1)\|_0\le \frac{C}{\lambda}\|a\|_0\|\tilde a\|_1\le C\frac{\delta^2}{\lambda\ell},
$$
and similarly 
\begin{equation*}
\|{\rm sym}(A^T E_2)\|_0\le \frac{C}{\lambda}\|a\|_0\|u\|_2\le C\frac{\delta^2}{\lambda\ell}.
\end{equation*}
Finally, 
\begin{equation*}
\begin{split}
\|E_1+E_2\|_0&\le \frac{C}{\lambda}(\|\tilde a\|_1+\|a\|_0\|u\|_2)\le \frac{C}{\lambda}(\|a\|_1+\delta\;\|u\|_2)\le C\frac{\delta}{\lambda\ell}.
\end{split}
\end{equation*}
In particular $\|E_1+E_2\|_0\leq C\delta$ and hence
\begin{equation*}
\|(E_1+E_2)^T(E_1+E_2)\|_0\leq C\frac{\delta^2}{\lambda\ell}.
\end{equation*}
Putting these estimates together we obtain \eqref{e:esth2} as required.

\medskip

{\bf Proof of \eqref{e:esth3}.} In fact 
$$
\|u-v\|_0\leq C\delta \frac{1}{\lambda} 
$$
is obvious, whereas the estimates for $j=1,\dots,N$ will follow by interpolation, 
provided the case $j=N+1$ holds. Therefore, we now prove this case.
A simple application of the product rule and interpolation yields
\begin{equation*}
\begin{split}
\|v-u\|_{N+1}&\le \frac{C}{\lambda}\left(\|\Psi\|_{N+1}\|\Gamma\|_0 + \|\Psi\|_0\|\Gamma \|_{N+1}\right)\\
&\le \frac{C}{\lambda}\left(\|u\|_{N+2}\|\tilde a\|_0+\|\Gamma\|_{N+1}\right).
\end{split}
\end{equation*}
Denoting by $D_x^j$ any partial derivative in the variables $x_1,\dots,x_n$ of order $j$, the chain rule can be written symbolically as
$$
D_x^{N+1}\Gamma=\sum_{i+j\le N+1}\left(\partial_s^{i}\partial_t^{j}\Gamma\right) \lambda^{j}\sum_{\sigma}C_{i,j,\sigma}(D_x\tilde a)^{\sigma_1}(D^2_x\tilde a)^{\sigma_2}\cdot\dots\cdot(D^{N+1}_x\tilde a)^{\sigma_{N+1}},
$$
where the inner sum is over all $\sigma$ with
\begin{eqnarray*}
\sigma_1+\dots+\sigma_{N+1}&=&i,\\
\sigma_1+2\sigma_2+\dots+(N+1)\sigma_{N+1}+j&=&N+1.
\end{eqnarray*}
These relations can be checked by counting the order of differentiation. 
Therefore, by using \eqref{e:dmmu}, \eqref{e:conditiona} and \eqref{e:lambda}
\begin{equation*}
\begin{split}
\|D_x^{N+1}\Gamma\|_0&\le C\sum_{i+j\le N+1}\left\|\partial_s^{i}\partial_t^{j}\Gamma\right\|_0\,\lambda^j \delta^{i}\,\ell^{-(N+1-j)}\\
&\le C\sum_{i+j\le N+1}\left\|\partial_s^{i}\partial_t^{j}\Gamma\right\|_0\,\delta^{i}\lambda^{N+1}\;\leq\; C\delta\lambda^{N+1}.
\end{split}
\end{equation*}
In particular, since $\|\Gamma\|_0\leq \delta$, we deduce that
$\|\Gamma\|_{N+1}\leq C\delta \lambda^{N+1}$.
Therefore
\begin{equation*}
\|v-u\|_{N+1}\;\leq\; \frac{C}{\lambda}\left(\delta \|u\|_{N+2}+
\delta \lambda^{N+1}\right)\;\leq\; C\,\delta \lambda^N\, .
\end{equation*}
This concludes the proof of the proposition.

\subsection{Proof of Proposition \ref{p:step2}}
The proof of Proposition \ref{p:step} would carry
over to this case if we can choose an appropriate
normal vector field $\zeta$ as at the beginning of
the proof of Proposition \ref{p:step}, enjoying the estimate \eqref{e:normalfield} with a fixed constant.

To obtain $\zeta(x)$ let $T (x)$ be the tangent plane
to $u (\R^n)$ at the point $u (x)$, i.e.
the plane generated by $\{\partial_1 u, \ldots, \partial_n u\}$.
Denote by $\pi_x$ the orthogonal projection of $\R^m$ onto
$T (x)$. 
Assuming that $\nabla u$ has oscillation smaller than $\eta_0$,
there exists a vector $w\in S^{n-1}$ such that $|\pi_x w|\leq 1/2$
for every $x\in \overline{\Omega}$. Hence, we can define
$$
\zeta (x) \;:=\; w - \pi_x w.
$$
It is straightforward to see that this choice of $\zeta$ 
gives
a map enjoying the same estimates as the $\zeta$ used in
the proof of Proposition \ref{p:step}.

%%%%%%%%%%%%%%%%%%%%%%%%%%%%%%%%%%%

\section{$h$--principle: stage}\label{s:stage}

\begin{proposition}[Stage, local]\label{p:stage}
For all $g_0\in \sym_n^+$ there exists $0<r<1$ such that
the following holds for any $\Omega\subset\R^n$ and 
$g\in C^{\beta} (\overline{\Omega})$ with $\|g-g_0\|_0\leq r$. 
There exists a $\delta_0>0$ such that, if 
$K\geq 1$ and $u\in C^2(\overline{\Omega}, \R^{n+1})$ satisfies
$$
\|u^\sharp e - g\|_0 \;\leq\; \delta^2 \;\leq\; \delta_0^2\qquad
\mbox{and}\qquad \|u\|_2 \;\leq\; \mu\, ,
$$ 
then there exists $v\in C^2(\overline{\Omega}, \R^{n+1})$
with
\begin{eqnarray}
\|v^\sharp e - g\|_0&\leq& C \delta^2\, , 
\left(\frac{1}{K}+\delta^{\beta-2}\mu^{-\beta}\right)\label{e:localerror}\\
\|v\|_2&\leq& C \mu K^{n_*}\, ,  \label{e:localC2}\\
\|u - v\|_1&\leq& C \delta\, .\label{e:localC1}
\end{eqnarray}
Here $C$ is a constant depending only on $n,g_0,g$ and $\Omega$.
\end{proposition}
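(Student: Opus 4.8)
The plan is to realize one \emph{stage} as a serial composition of $n_*$ applications of the construction step (Proposition \ref{p:step}), preceded by a single mollification of $u$ to cure the loss-of-derivative problem. First I would choose $r$ small enough (depending on $g_0$) so that every metric $g$ with $\|g-g_0\|_0\le r$ admits, locally on $\overline\Omega$, a decomposition of the error $g-u^\sharp e$ into $n_*$ primitive metrics $\sum_{k=1}^{n_*}a_k^2\,\nu_k\otimes\nu_k$ with the natural estimates $\|a_k\|_0\sim\|g-u^\sharp e\|_0^{1/2}$ and $\|a_k\|_{j+1}\lesssim\|u\|_{j+2}$ (this uses that $\sym_n^+$ is open and that positive definite forms near $g_0$ lie in a fixed simplex spanned by rank-one forms $\nu_k\otimes\nu_k$; one also needs the $\nu_k$ to be independent of $x$, which is where the smallness of $r$ enters). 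I would then set $\mathaccent"707F u:=u*\varphi_\ell$ for a suitable mollification parameter $\ell$ and invoke Proposition \ref{p:improv} (via Lemma \ref{l:mollify}, estimate \eqref{e:mollify3}) to see that $\|\mathaccent"707F u^\sharp e-u^\sharp e\|_0=O(\ell^{-1}\|u\|_1^2\cdot\ell^2)$—more precisely I need the bound to be of order $\delta^2(\delta^{\beta-2}\mu^{-\beta})$ after choosing $\ell\sim\delta^{2/\beta}\mu^{-2/\beta}\cdot(\text{something})$; the exact calibration of $\ell$ against $K,\delta,\mu,\beta$ is the bookkeeping that produces the term $\delta^{\beta-2}\mu^{-\beta}$ in \eqref{e:localerror}, and it also must be compatible with $\|g-g^{*\varphi_\ell}\|_0\lesssim\ell^\beta\|g\|_\beta$ being of the same order.

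Next I would run the $n_*$ steps. Starting from $u_0:=\mathaccent"707F u$, at step $k$ I apply Proposition \ref{p:step} with the primitive metric $a_k^2\nu_k\otimes\nu_k$ (more precisely with the residual amplitude, since after step $k$ the metric to be corrected is $g-u_k^\sharp e$, not the original error—so the $a_k$ must be recomputed at each step, which is fine because the errors are controlled), with parameters $\delta$ (essentially $\|g-u_0^\sharp e\|_0^{1/2}$), the fixed $\ell$ chosen above, and a step frequency $\lambda_k$. The output of step $k$ is $u_{k+1}$ with $\|u_{k+1}^\sharp e-(u_k^\sharp e+a_k^2\nu_k\otimes\nu_k)\|_0\le C\delta^2/(\lambda_k\ell)$, $\|u_{k+1}-u_k\|_1\le C\delta$, and $\|u_{k+1}\|_2\le C\|u_k\|_2\lambda_k\ell$ (the $j=2$ estimate from \eqref{e:esth3} with $N=0$, i.e. the loss-of-derivative form in Remark \ref{r:lossofderivative}). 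Telescoping the metric errors over the $n_*$ steps and using that each application corrects one primitive metric exactly up to the stated $C^0$ error, the accumulated metric error is $g-u_{n_*}^\sharp e=O(\delta^2/\min_k(\lambda_k\ell))$ plus the initial mollification error $O(\delta^2\delta^{\beta-2}\mu^{-\beta})$. Choosing each $\lambda_k\ell\sim K$ (so $\lambda_k\sim K/\ell$, which one must check satisfies $\lambda_k\ge\ell^{-1}$, i.e. $K\ge1$) gives \eqref{e:localerror}; the $C^1$ estimate \eqref{e:localC1} follows by summing $\|u_{k+1}-u_k\|_1\le C\delta$ over the fixed number $n_*$ of steps; and the $C^2$ estimate \eqref{e:localC2} follows by multiplying: $\|u_{n_*}\|_2\le C(\lambda\ell)^{n_*}\|\mathaccent"707F u\|_2\le C K^{n_*}\mu$, where $\|\mathaccent"707F u\|_2\le C\|u\|_2\le C\mu$ by \eqref{e:mollify1}—or rather $\|\mathaccent"707F u\|_2\lesssim\ell^{-2}\|u\|_0$, so again the choice of $\ell$ must be arranged so that this is still $\lesssim\mu$, which is automatic since $\mathaccent"707F u=u*\varphi_\ell$ and $\|u*\varphi_\ell\|_2\le\|u\|_2$.

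The main obstacle is the simultaneous calibration of the three parameters $\ell$, $\delta$, $\lambda$ (the $\lambda_k$ can all be taken equal) against $K,\mu,\beta$: one needs $\ell$ small enough that the mollification errors in both $u^\sharp e$ and $g$ are absorbed into the $\delta^{\beta-2}\mu^{-\beta}$ term, yet not so small that $\|\mathaccent"707F u\|_2$ or the required $\lambda=K/\ell$ blow up beyond the claimed bounds; and the step amplitudes $a_k$ must be re-derived at each of the $n_*$ steps from the current residual metric, which requires checking that the hypotheses \eqref{e:nondeg}–\eqref{e:conditiona} of Proposition \ref{p:step} continue to hold (in particular that $u_k^\sharp e$ stays uniformly positive definite—this is where $\|u-v\|_1\le C\delta$ with $\delta\le\delta_0$ small is used, together with the initial non-degeneracy coming from $\|u^\sharp e-g\|_0\le\delta_0^2$ and $g$ close to $g_0\in\sym_n^+$—and that the derivative bounds \eqref{e:conditiona} propagate, which they do with a worse but still acceptable constant since $N=0$ here so only $\|u_k\|_2$ and $\|a_k\|_1$ enter). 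I would organize the proof so that $\delta_0$ is chosen last, small enough to make all the smallness requirements (non-degeneracy along the iteration, $\|a_k\|_0\le\delta$, $\mathrm{osc}\,\nabla u$ small if one were in the higher-codimension setting) simultaneously valid. The rest is the routine product-rule and interpolation bookkeeping already exemplified in the proof of Proposition \ref{p:step}.
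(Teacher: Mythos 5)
Your architecture---mollify $u$ and $g$, decompose the mollified error into $n_*$ primitive metrics, correct them serially via Proposition~\ref{p:step}---is the right one, but there are two genuine gaps. The most serious is a misreading of Remark~\ref{r:lossofderivative}: the estimate $\|u_{k+1}\|_2\le C\|u_k\|_2\,\lambda\ell$ is exactly what Proposition~\ref{p:step} with $N=0$ does \emph{not} provide. Estimate \eqref{e:esth3} stops at $j\le N+1$, so with $N=0$ you only control $\|u_{k+1}-u_k\|_0$ and $\|u_{k+1}-u_k\|_1$; the remark is pointing out that the $j=N+2$ bound is \emph{absent}. Consequently, running $n_*$ steps with a fixed $N=0$ and a fixed $\ell$ fails already after the first step: you would need $\|u_1\|_2\le\delta\ell^{-1}=\mu$ to invoke Proposition~\ref{p:step} again, but $\|u_1\|_2\sim\mu K>\mu$. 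The paper's cure is to descend a ladder of derivative levels: at step $j$ one applies Proposition~\ref{p:step} with $N_j=n_*-j$, $\ell_j=\ell K^{-j}$, $\lambda_j=K^{j+1}\ell^{-1}$ (so $\lambda_j\ell_j=K$), and the inductive hypothesis $\|u_j\|_{k+2}\le C\delta\ell_j^{-(k+1)}$ for $k\le N_j$ propagates by \eqref{e:esth3}; the $C^2$-bound at the end is the $k=0$ instance of this chain, not a product of nonexistent per-step $C^2$ factors. This also forces $\ell=\delta/\mu$ (the largest value for which $\|\tilde u\|_{k+2}\le C\mu\ell^{-k}\le C\delta\ell^{-(k+1)}$ for all $k\le n_*$), which is already enough for $\|\tilde g-g\|_0\lesssim\ell^\beta=\delta^\beta\mu^{-\beta}=\delta^2\cdot\delta^{\beta-2}\mu^{-\beta}$; your calibration $\ell\sim\delta^{2/\beta}\mu^{-2/\beta}$ is not the one the statement is built around.

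Second, you cannot decompose $\tilde g-\tilde u^\sharp e$ (or $g-u^\sharp e$) as $\sum a_i^2\nu_i\otimes\nu_i$ directly: $\|u^\sharp e-g\|_0\le\delta^2$ says the error is \emph{small}, not sign-definite, so the $a_i$ need not exist. The paper's rescaling device fixes this: one sets $\tilde h=\tilde g+\frac{r}{C\delta^2}(\tilde g-\tilde u^\sharp e)$, which lies within $2r$ of $g_0$, and $u_0=(1+Cr^{-1}\delta^2)^{-1/2}\tilde u$, so that $\tilde g-u_0^\sharp e$ is a small positive multiple of $\tilde h$ and Lemma~\ref{l:Carat} produces the $a_i$ with the right estimates. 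Your proposal never shrinks the map, so the decomposition is not justified. Finally, the parenthetical idea of recomputing $a_k$ at each step from the residual $g-u_k^\sharp e$ is both unnecessary and would not work: the intermediate residual is essentially $\sum_{i>k}a_i^2\nu_i\otimes\nu_i$ plus step errors, which is degenerate (hence not decomposable via Lemma~\ref{l:Carat}), and depends on the already-oscillatory $u_k$. The paper computes all $a_i$ once, before the iteration, and controls the per-step defects telescopically with $\lambda_j\ell_j=K$.
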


The Proposition above is the basic stage of
the iteration scheme which will prove Theorem \ref{t:local}.
A similar proposition, to be used in the proof
of Theorem \ref{t:global} will be stated later.

\subsection{Decomposing a metric into primitive metrics} 
\begin{lemma}\label{l:Carat}
Let $g_0\in \sym_n^+$. Then there exists $r>0$, 
vectors $\nu_1,\dots,\nu_{n_*}\in \S^{n-1}$
and linear maps $L_k: \sym_n\to \R$
such that 
$$
g = \sum_{k=1}^{n_*} L_k (g) \nu_k\otimes \nu_k\quad \mbox{for every $g\in \sym_n$}
$$
and, moreover,
$L_k (g)\geq r$ for every $k$ and every $g\in\sym_n^+$ with $|g-g_0|\leq r$.
\end{lemma}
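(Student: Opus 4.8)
The plan is to prove Lemma \ref{l:Carat} by a perturbation argument around the single point $g_0$, using that the symmetric rank-one matrices $\{\nu\otimes\nu : \nu\in\S^{n-1}\}$ span the $n_*$-dimensional space $\sym_n$. First I would observe that one can choose $\nu_1,\dots,\nu_{n_*}\in\S^{n-1}$ such that the matrices $\nu_k\otimes\nu_k$ form a \emph{basis} of $\sym_n$: indeed, the linear span of all $\nu\otimes\nu$ is a subspace of $\sym_n$ invariant under conjugation by $O(n)$, hence either $\{0\}$ or all of $\sym_n$, and it is clearly nonzero; so a basis can be extracted from this spanning set. Fix such a basis once and for all.

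Next, since $\{\nu_k\otimes\nu_k\}_{k=1}^{n_*}$ is a basis of $\sym_n$, there is a unique dual basis of linear functionals $L_1,\dots,L_{n_*}:\sym_n\to\R$ characterized by $L_k(\nu_j\otimes\nu_j)=\delta_{kj}$, and by construction every $g\in\sym_n$ satisfies $g=\sum_{k=1}^{n_*}L_k(g)\,\nu_k\otimes\nu_k$. This gives the decomposition identity for free; the only remaining point is the positivity of the coefficients near $g_0$.

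The key step is the positivity claim. Here the natural choice is to take $g_0=\Id$ (the general case reduces to this by a linear change of variables, or one simply notes that the statement is claimed for a given $g_0$, and one can arrange $\Id$ to be in the interior of the relevant cone of admissible directions). For $g_0=\Id$ one has $\Id=\frac{1}{?}\sum$ with explicitly positive weights — more robustly, I would argue abstractly: each $L_k$ is continuous (linear on a finite-dimensional space), so $g\mapsto L_k(g)$ is continuous, hence if $L_k(g_0)>0$ for all $k$ then $L_k(g)\ge \tfrac12\min_j L_j(g_0)>0$ for all $g$ with $|g-g_0|$ small, and we take $r$ smaller than this threshold and than $\tfrac12\min_j L_j(g_0)$. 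So everything hinges on being able to choose the basis $\{\nu_k\}$ so that $L_k(g_0)>0$ for every $k$, i.e. so that $g_0$ lies in the \emph{open} cone positively spanned by $\{\nu_k\otimes\nu_k\}$. Since $g_0$ is positive definite it is a positive combination $g_0=\sum_i \mu_i^2 e_i'\otimes e_i'$ of rank-one terms (spectral decomposition, all $\mu_i^2>0$); perturbing each $e_i'$ slightly to nearby directions and adding a few more rank-one terms with small positive coefficients, one can write $g_0$ as a \emph{strictly} positive combination of exactly $n_*$ rank-one matrices $\nu_k\otimes\nu_k$ that are moreover linearly independent (linear independence is a generic/open condition, satisfied after an arbitrarily small perturbation of the $\nu_k$, while positivity of the coefficients is an open condition preserved under small perturbation). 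With this choice $L_k(g_0)>0$ for all $k$, and the argument closes.

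The main obstacle — really the only nonroutine point — is thus the simultaneous requirement that the chosen rank-one matrices be both a \emph{basis} of $\sym_n$ \emph{and} positively span a neighbourhood of $g_0$; both are open conditions on $(\nu_1,\dots,\nu_{n_*})\in(\S^{n-1})^{n_*}$, and one must check they can be met at once, which follows because one can first achieve strict positivity of the coefficients in a decomposition of $g_0$ using possibly more than $n_*$ vectors, then prune/merge to exactly $n_*$ while keeping positivity, then perturb to gain linear independence without destroying positivity. I do not expect any difficulty beyond bookkeeping here. (One could alternatively invoke Carathéodory's theorem for the cone $\sym_n^+$ inside $\sym_n$ to get $g_0$ as a combination of at most $n_*$ extreme rays, i.e. rank-one matrices, then perturb into the interior and to linear independence — hence the lemma's label.)
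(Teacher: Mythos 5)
Your overall strategy matches the paper's --- extract a basis $\{\nu_k\otimes\nu_k\}$ of $\sym_n$ from rank-one matrices, take the dual basis of functionals $L_k$, arrange $L_k(g_0)>0$ for every $k$, and get $L_k(g)\ge r$ near $g_0$ by continuity --- and the first and last of these steps are carried out correctly. The genuine gap is in the middle step, which is in fact the only nonroutine point of the lemma: showing that $g_0$ is a \emph{strictly} positive combination of a rank-one basis.

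The perturbation argument as sketched does not close. The spectral decomposition $g_0=\sum_{i=1}^n\mu_i^2\,e_i'\otimes e_i'$ has only $n<n_*$ terms. If you complete $\{e_i'\otimes e_i'\}$ to a rank-one basis by adjoining further $\nu_j\otimes\nu_j$, the dual coordinates of $g_0$ in that basis are forced: they equal $\mu_i^2$ for $i\le n$ and are \emph{exactly} zero for every adjoined index, whatever the $\nu_j$ are, because $g_0$ already lies in the span of the first $n$ basis elements. So there is no strict positivity to ``preserve under a small perturbation''; you start on the boundary of the positive orthant. Perturbing the $e_i'$ as well does move the zero coordinates, but with no control on their sign; and near a degeneration of the basis the dual functionals blow up. The Carath\'eodory alternative runs into the same wall: it yields at most $n_*$ linearly independent rank-one terms with positive weights, but possibly strictly fewer (for a full-rank $g_0$ the minimal number is $n$), and padding out to $n_*$ reintroduces zero coefficients.

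What closes the gap is a conjugation, not a perturbation, and that is exactly what the paper does. Take \emph{any} rank-one basis $\{f_k\otimes f_k\}_{k=1}^{n_*}$ of $\sym_n$ (the one you construct, or explicitly $\{(e_i+e_j)\otimes(e_i+e_j):i\le j\}$) and set $h:=\sum_k f_k\otimes f_k$. Then $h$ is positive definite, because the $f_k$ must span $\R^n$ (otherwise the $f_k\otimes f_k$ would lie in a proper subspace of $\sym_n$), so $v^Thv=\sum_k(f_k\cdot v)^2>0$ for $v\neq 0$. Pick an invertible $L$ with $LhL^T=g_0$; then
$$
g_0=\sum_k(Lf_k)\otimes(Lf_k)=\sum_k|Lf_k|^2\,\nu_k\otimes\nu_k\,,\qquad \nu_k:=\frac{Lf_k}{|Lf_k|}\,,
$$
with every coefficient $|Lf_k|^2>0$, and $\{\nu_k\otimes\nu_k\}$ is still a basis because $A\mapsto LAL^T$ is a linear automorphism of $\sym_n$. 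With this replacement, your continuity argument producing $r>0$ goes through unchanged.
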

\begin{proof} 
Consider the set $S:=\{(e_i+e_j)\otimes (e_i+e_j), i\leq j\}$, where $\{e_i\}$
is the standard basis of $\R^n$. Since the span of $S$
contains all matrices of the form $e_i\otimes e_j + e_j\otimes e_i$,
clearly $S$ generates $\sym_n$. On the other hand $S$ consists of
$n_*$ matrices with $n_* = {\rm dim}\, (\sym_n)$.
So $S$ is a basis for $\sym_n$. Let us relabel the vectors $e_i+e_j$ ($i\leq j$) as
$f_1,\dots,f_{n_*}$, and let 
$$
h=\sum_{k=1}^{n_*}f_k\otimes f_k.
$$
Then $h\in \sym_n^+$ and hence there exists an invertible 
linear transformation $L$ such that $L h L^T = g_0$. 
In particular, writing $\nu_k=Lf_k/|Lf_k|\in \S^{n-1}$, we have
$$
g_0 \;=\; \sum_{k=1}^{n_*} Lf_k\otimes Lf_k\, 
\;=\; \sum_{k=1}^{n_*} |Lf_k|^{2}  \nu_k\otimes \nu_k\, .
$$
Note that the set $\{\nu_k\otimes \nu_k\}$ is also a basis for 
$\sym_n$ and therefore there exist linear maps $L_k: \sym_n \to \R$
such that $\sum L_k (A) \nu_k\otimes \nu_k$ is the unique
representation of $A\in \sym_n$ as linear combination of $\nu_k\otimes \nu_k$.
In particular, $L_i (g_0) = |Lf_k|^{2} >0$. The existence of $r>0$ satisfying the
claim of the lemma follows easily.
\end{proof}

\subsection{Proof of Proposition \ref{p:stage}}
Choose $r>0$ and $\gamma>1$ so that the statement of Lemma \ref{l:Carat} holds with
$g_0$ and $2r$, and so that 
$$
\frac{1}{\gamma} I\leq h\leq \gamma 
\qquad \mbox{for any $h\in \sym_n^+$ with $|h-g_0|<2r$}.
$$
Moreover, extend $u$ and $g$ to $\R^n$ so that 
$$
\|u\|_{C^2(\R^n)}\leq C\|u\|_{C^2(\overline{\Omega})},\quad  
\|g\|_{C^\beta(\R^n)}\leq C\|g\|_{C^\beta(\overline{\Omega})}.
$$
The procedure of such an extension is well known, with the constant $C$ depending on $n,\beta$ and $\Omega$.
In what follows, the various constants will be allowed to depend in addition on $r$ and $\gamma$.

\medskip

{\bf Step 1. Mollification.} We set 
$$
\ell =\frac{\delta}{\mu},
$$ 
and let
\begin{equation*}
\tilde u = u*\varphi_\ell,\quad \tilde g = g*\varphi_\ell,
\end{equation*}
where $\varphi\in C^\infty_c (B_1 (0))$ 
is a symmetric nonnegative convolution kernel with $\int\varphi=1$.
Lemma \ref{l:mollify} implies
\begin{eqnarray}
\|\tilde u-u\|_1&\leq& C\|u\|_2\,\ell \leq C\delta,\label{e:localmollifiedu}\\
\|\tilde g-g\|_0&\leq& C \|g\|_\beta\,\ell^\beta,\label{e:localmollifiedg}\\
\|\tilde u\|_{k+2}&\leq& C\|u\|_2\,\ell^{-k}\leq C\delta\ell^{-(k+1)},
\end{eqnarray}
and
\begin{equation}\label{e:commutate}
\begin{split}
\|\tilde u^\sharp e-\tilde g\|_k&\leq \|\tilde u^\sharp e-(u^\sharp e)*\varphi_\ell\|_k+
\|(u^\sharp e)*\varphi_\ell-g*\varphi_\ell\|_k\\
&\leq \;C\;\ell^{2-k}\|u\|_2^2\;+\;C\;\ell^{-k}\|u^\sharp e-g\|_0\;\;\leq\;\; 
C\; \delta^2\ell^{-k},
\end{split}
\end{equation}
where $k=0,1,\dots,n_*$.
Moreover, since the set $\{h\in\sym_n^+:\,|h-g_0|\leq r\}$ is convex,
$\tilde g$ also satisfies
$\|\tilde g-g_0\|_0\leq r$.

\medskip

{\bf Step 2. Rescaling.} 
First of all, observe that
$$
\tilde h:=\tilde g+\frac{r}{C\delta^2}(\tilde g-\tilde u^\sharp e)
$$
satisfies the condition $|\tilde h(x)-g_0|\leq \frac{r}{C\delta^2}\|\tilde g-\tilde u^\sharp e\|_0 + r\leq 2r$.
Therefore, using Lemma \ref{l:Carat} we have
$$
(1+Cr^{-1}\delta^2)\tilde g\,-\,\tilde u^\sharp e\,=\,\frac{C\delta^2}{r}\,\tilde h=\sum_{i=1}^{n_*}\tilde a_i^2\nu_i\otimes\nu_i,
$$
where $\tilde a_i(x)=\left(C\frac{\delta^2}{r}L_i(\tilde h(x))\right)^{1/2}$. In particular
$\tilde a_i$ is smooth and
\begin{eqnarray*}
\|\tilde a_i\|_k&\leq& C\delta\frac{\|L_i(\tilde h)\|_k}{\|L_i(\tilde h)\|_0^{1/2}}
\;\leq\; C\delta\|\tilde h\|_k\\ 
&\leq& 
C\delta \left(\|\tilde g\|_k+\frac{1}{\delta^2}\|\tilde g-\tilde u^\sharp e\|_k\right)
\;\leq\; C\delta \ell^{-k}
\end{eqnarray*}
for $k=0,1,2,\dots,n_*$ (note that the first inequality is achieved through
interpolation).
Let
$$
u_0=\frac{1}{(1+Cr^{-1}\delta^2)^{1/2}}\tilde u,\quad a_i=\frac{1}{(1+Cr^{-1}\delta^2)^{1/2}}\tilde a_i.
$$
Then we have
$$
\tilde g\,-\,u_0^\sharp e\,=\,\sum_{i=1}^{n_*}a_i^2\nu_i\otimes\nu_i,
$$
with
\begin{eqnarray}
\|\tilde u-u_0\|_1&\leq& C\delta,\label{e:localrescaledu}\\
\|a_i\|_{0}&\leq& C\delta,\\
\|u_0\|_{k+2}+\|a_i\|_{k+1}&\leq& C\delta \ell^{-(k+1)}\,,\label{e:u0estimate}
\end{eqnarray}
for $k=0,1,\dots,n^*$. Notice that the constants above depend
also on $k$, but since we will only use these estimates for $k\leq n_*$, 
this dependence can be suppressed.

Finally, using \eqref{e:commutate} we have
$\|u_0^\sharp e-g_0\|_0\leq r+C\delta^2$, so that 
$\gamma^{-1}I\leq u_0^{\sharp}e\leq \gamma I$, provided $\delta_0$ is sufficiently small.

\medskip

{\bf Step 3. Iterating one-dimensional oscillations.} 
We now apply $n_*$ times successively Proposition \ref{p:step}, with
$$
\ell_j=\ell K^{-j},\;\lambda_j=K^{j+1}\ell^{-1},\;N_j=n_*-j
$$
for $j=0,1,\dots,n_*$. In other words we construct a sequence of immersions $u_j$ such that
$\frac{1}{\gamma}I\leq u_j^\sharp e\leq \gamma I$ and
\begin{equation}\label{e:inductive}
\|u_j\|_{k+2}\leq C\delta\ell^{-(k+1)}_j\quad\textrm{ for }k=0,1,\dots,N_j.
\end{equation}
To see that Proposition \ref{p:step} is applicable, observe that $\lambda_j=K\ell_j^{-1}$. Therefore
it suffices to check inductively the validity of \eqref{e:inductive}. This follows easily 
from \eqref{e:esth3}. The constants will depend on $j$, but this can again be suppressed  because
$j\leq n_*$. 

In this way we obtain the functions $u_1,u_2,\dots,u_{n_*}$
with estimates
\begin{eqnarray*}
\|u_j\|_{2}\leq C\delta\ell^{-1}K^j,\\
\|u_{j+1}^\sharp e-(u_j^{\sharp}e+a_{j+1}^2\nu_{j+1}\otimes \nu_{j+1})\|_0&\leq& C\frac{\delta^2}{\lambda_j\ell_j}=C\delta^2\frac{1}{K},
\end{eqnarray*}
and moreover
\begin{equation*}
\|u_{j+1}-u_j\|_1\leq C\delta.
\end{equation*}
Observe also that $\|u_j^\sharp e-g_0\|_0\leq r+ C\delta^2$, so that, provided $\delta_0$ is sufficiently small, $\gamma^{-1}I\leq u_j^{\sharp}e\leq \gamma I$ for all $j$. 

Thus $v:=u_{n_*}$ satisfies the estimates
\begin{eqnarray*}
\|v^{\sharp}e-\tilde g\|_0&\leq& C\delta^2\frac{1}{K},\\
\|v\|_2&\leq& C \mu\, K^{n_*},\\
\|v-u_0\|_1&\leq &C\delta.
\end{eqnarray*}
The estimates \eqref{e:localerror}, \eqref{e:localC2} 
and \eqref{e:localC1} follow from the above combined with 
\eqref{e:localmollifiedu}, \eqref{e:localmollifiedg} and \eqref{e:localrescaledu}.

\subsection{Stage for general manifolds}\label{s:globalstage}

Given $M$ as in Theorem \ref{t:global} we fix a finite atlas
of $M$ with charts $\Omega_i$ and a corresponding  partition of 
unity $\{\phi_i\}$, so that $\sum \phi_i =1$ and
$\phi_i\in C^\infty_c (\Omega_i)$. Furthermore, 
on each $\Omega_i$ we fix a choice of coordinates.

Using the partition of unity we define the space $C^k(M)$. In particular, let
$$
\|u\|_k:=\sum_{i}\|\phi_iu\|_k.
$$
Similarly, we define "mollification on $M$" via the partition of unity. In other words we fix 
$\varphi\in C^\infty_c (B_1(0))$, and for a function $u$ on $M$ we define
\begin{equation}\label{e:globalmollification}
u*\varphi_\ell \;:=\; \sum_i (\phi_i u)*\varphi_\ell\, .
\end{equation}
It is not difficult to check that the estimates in Lemma \ref{l:mollify} continue to hold
on $M$ with these definitions.

Next, let $g$ be a metric on $M$ as in Theorem \ref{t:global}. Since $M$ is compact and $g$ is continuous, there exists 
$\gamma>0$ such that
\begin{equation}\label{e:gamma}
\frac{1}{\gamma}I\;\leq\;g\;\leq\;\gamma I\quad\textrm{ in }M.
\end{equation}
Moreover, also by compactness, there exists $r_0>0$ such that Lemma \ref{l:Carat} holds with $r=2r_0$ for any $g_0$ satisfying 
$\frac{1}{\gamma}I\leq g_0\leq\gamma I$. Therefore there exists $\rho_0>0$ so that
\begin{equation}\label{e:radius}
\begin{split}
U\subset\Omega_i\textrm{ for some $i$ and }&\textrm{ osc}_U g<r_0\\
\textrm{ whenever }&U\subset M\textrm{ with }\textrm{diam } U<\rho_0.
\end{split}
\end{equation}
Here $\textrm{osc}_Ug$ is to be evaluated in the coordinates of
the chart $\Omega_i$.

In the following we will need coverings of $M$ with the following property:
\begin{definition}[Minimal cover of $M$]
For $\rho>0$ a finite open covering 
$\mathcal{C}$ of $M$ is a minimal cover of diameter $\rho$ if: 
\begin{itemize}
\item the diameter of each $U\in\mathcal{C}$ is less
than $\rho$;
\item $\mathcal{C}$ can be subdivided into $n+1$
subfamilies $\mathcal{F}_i$, each consisting  
of pairwise disjoint sets.
\end{itemize}
\end{definition}
The existence of such coverings is a well-known fact. For the convenience of the reader we give a short proof at the end of this section.

We are now ready to state the 
iteration stage needed for the proof of Theorem
\ref{t:global}. Recall that $\eta_0>0$ is the constant from Proposition \ref{p:step2}.

\begin{proposition}[Stage, global]\label{p:stage2}
Let $(M^n,g)$ be a smooth, compact Riemannian manifold with $g\in C^\beta(M)$, and let
$\mathcal{C}$ be a minimal cover of $M$ of diameter $\rho<\rho_0$, where $\rho_0$ is as in \eqref{e:radius}. There exists $\delta_0>0$ such that, if
$K\geq 1$ and $u\in C^2 (M, \R^m)$ satisfies
\begin{eqnarray}
\|u^\sharp e - g\|_0 &\leq& \delta^2 < \delta_0^2,\label{e:globalerror0}\\
\|u\|_2 &\leq& \mu,\\
{\rm osc}_{U}\,\nabla u &\leq& \eta_0/2\textrm{ for all }U\in\mathcal{C}\label{e:globalosc},
\end{eqnarray}
then there exists $v\in C^2(M, \R^{m})$
with
\begin{eqnarray}
\|v^\sharp e - g\|_0&\leq& C \delta^2 \left(\frac{1}{K}+
\delta^{\beta-2}\mu^{-\beta}\right)\, ,\label{e:globalerror}\\
\|v\|_2&\leq& C \mu\, K^{(n+1)n_*}\, , \label{e:globalC2}\\
\|u - v\|_1&\leq& C \delta\, .\label{e:globalC1}
\end{eqnarray}
The constants $C$ depend only $(M^n,g)$ and $\mathcal{C}$.
\end{proposition}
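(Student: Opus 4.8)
The plan is to mimic the proof of Proposition \ref{p:stage} (the local stage), but now performing the $n_*$ one-dimensional oscillations \emph{simultaneously} on all charts of a given subfamily $\mathcal F_i$, and then cycling through the $n+1$ subfamilies. Since each subfamily consists of pairwise disjoint sets, the perturbations localized to different members of the same $\mathcal F_i$ do not interact, so one subfamily costs only $n_*$ steps of Proposition \ref{p:step2}; running over all $n+1$ subfamilies therefore costs $(n+1)n_*$ steps, which is exactly what produces the exponent $K^{(n+1)n_*}$ in \eqref{e:globalC2}.

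First I would mollify: set $\ell=\delta/\mu$ and replace $u,g$ by $\tilde u=u*\varphi_\ell$, $\tilde g=g*\varphi_\ell$ using the partition-of-unity mollification \eqref{e:globalmollification}. By the remark that Lemma \ref{l:mollify} survives on $M$, one gets the analogues of \eqref{e:localmollifiedu}--\eqref{e:commutate}: $\|\tilde u-u\|_1\le C\delta$, $\|\tilde g-g\|_0\le C\|g\|_\beta\ell^\beta$, $\|\tilde u\|_{k+2}\le C\delta\ell^{-(k+1)}$, and the commutator estimate $\|\tilde u^\sharp e-\tilde g\|_k\le C\delta^2\ell^{-k}$ for $k\le (n+1)n_*$. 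Mollification also does not spoil the oscillation hypothesis \eqref{e:globalosc} by more than a harmless factor, so after choosing $\delta_0$ small we still have ${\rm osc}_U\nabla\tilde u\le \eta_0$ on each $U\in\mathcal C$, which is what is needed to apply Proposition \ref{p:step2}.

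Next, for each chart $\Omega_i$ (via \eqref{e:radius}, each $U\in\mathcal C$ sits inside some chart with ${\rm osc}_U g<r_0$) I would invoke Lemma \ref{l:Carat} with the frozen metric $g_0$ to write, on a neighborhood of each $U$,
$$
(1+Cr_0^{-1}\delta^2)\,\tilde g-\tilde u^\sharp e=\sum_{k=1}^{n_*}\tilde a_{U,k}^2\,\nu_{U,k}\otimes\nu_{U,k},
$$
with the bounds $\|\tilde a_{U,k}\|_0\le C\delta$ and $\|\tilde a_{U,k}\|_j\le C\delta\ell^{-j}$, just as in Step 2 of Proposition \ref{p:stage}. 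Multiplying by the partition-of-unity functions $\phi_U$ subordinate to $\mathcal C$ (squared), one obtains a global decomposition of the error into $(n+1)n_*$ primitive metrics $(\psi\,a)^2\nu\otimes\nu$, grouped into $n+1$ packets of $n_*$, where within each packet the supports belonging to different members of a subfamily are disjoint. I would then apply Proposition \ref{p:step2} successively $(n+1)n_*$ times, with the schedule $\ell_j=\ell K^{-j}$, $\lambda_j=K^{j+1}\ell^{-1}$, $N_j=(n+1)n_*-j$, exactly as in Step 3 of Proposition \ref{p:stage}, checking inductively the hypotheses \eqref{e:nondeg}--\eqref{e:conditiona} of the step proposition from \eqref{e:esth3}; at each step the $C^0$ metric error contributed is $C\delta^2\lambda_j^{-1}\ell_j^{-1}=C\delta^2/K$, the $C^1$ increment is $C\delta$, and the $C^2$ norm multiplies by $CK$. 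Summing, $v$ (the final map) satisfies $\|v^\sharp e-\tilde g\|_0\le C\delta^2/K$, $\|v\|_2\le C\mu K^{(n+1)n_*}$, $\|v-u_0\|_1\le C\delta$; combining with the mollification and rescaling estimates gives \eqref{e:globalerror}, \eqref{e:globalC2}, \eqref{e:globalC1}.

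The main obstacle is the bookkeeping needed to make the disjointness pay off: one must arrange the partition of unity and the ordering of steps so that the $n_*$ corrugations within a single subfamily genuinely can be treated as independent (so the number of \emph{interacting} steps really is $n_*$ per subfamily, not $|\mathcal C|\cdot n_*$), while simultaneously verifying that perturbations in one subfamily do not destroy the frozen-metric decomposition used by a later subfamily — this is where the factor $(1+Cr_0^{-1}\delta^2)$ slack and the smallness of $\delta_0$ are essential, exactly as the margin $r$ versus $2r$ was used locally. A secondary technical point is that Proposition \ref{p:step2} requires ${\rm osc}\,\nabla u\le\eta_0$, so one must check that this oscillation bound is preserved through all $(n+1)n_*$ steps; since each step changes $\nabla u$ by only $O(\delta)$ in $C^0$ (a consequence of \eqref{e:esth3} with $j=1$) and the cover has the fixed overlap structure, the bound \eqref{e:globalosc} with the factor $1/2$ leaves enough room, provided $\delta_0$ is small.
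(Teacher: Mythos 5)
Your proposal follows the paper's proof essentially line for line: the same $\ell=\delta/\mu$ mollification, the same rescaling with per-chart frozen metrics and a partition of unity with $\sum\psi_j^2=1$ to produce the $(n+1)n_*$-term primitive decomposition, the same grouping by the $n+1$ disjoint subfamilies of the minimal cover so that each subfamily costs $n_*$ serial applications of Proposition~\ref{p:step2} with the schedule $\ell_j=\ell K^{-j}$, $\lambda_j=K^{j+1}\ell^{-1}$, $N_j=(n+1)n_*-j$, and the same observation that the $O(\delta)$ per-step $C^1$ increments together with a small $\delta_0$ keep the oscillation hypothesis within $\eta_0$ throughout. No substantive deviation or gap.
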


\subsection{Proof of Proposition \ref{p:stage2}} We proceed
as in the proof of Proposition \ref{p:stage}. Enumerate the covering as $\mathcal{C}=\{U_j\}_{j\in J}$, and
for each $j$ choose a matrix $g_j\in\sym_n^+$ such that
$$
|g(x)-g_j|\leq r_0\,\textrm{ for }x\in U_j.
$$
Furthermore, fix a partition of unity $\{\psi_j\}$ for $\mathcal{C}$ in the sense that $\psi_j\in C^{\infty}_c(U_j)$ and $\sum_j\psi_j^2=1$ on $M$.

\medskip

{\bf Step 1. Mollification.} The mollification step is precisely as in Proposition \ref{p:stage}. We set
$$
\ell =\frac{\delta}{\mu},
$$ 
and let
\begin{equation*}
\tilde u = u*\varphi_\ell,\quad \tilde g = g*\varphi_\ell,
\end{equation*}
where now the convolution is defined in \eqref{e:globalmollification} above.
Then, as before,
\begin{eqnarray}
\|\tilde u-u\|_1&\leq& C\delta,\label{e:globalmollifiedu}\\
\|\tilde g-g\|_0&\leq& C\|g\|_{\beta}\ell^{\beta},\label{e:globalmollifiedg}\\
\|\tilde u\|_{k+2}&\leq& C\delta\ell^{-(k+1)},\\
\|\tilde u^\sharp e-\tilde g\|_k&\leq& C\delta^2\ell^{-k},
\end{eqnarray}
for $k=0,1,\dots,(n+1)n_*$.
In particular, for any $j\in J$ and any $x\in U_j$
$$
|\tilde g(x)-g_j|\leq r_0+C\ell^{\beta}\leq r_0+C\delta_0^{\beta}\leq \frac{3}{2}r_0
$$
provided $\delta_0>0$ is sufficiently small.

\medskip

{\bf Step 2. Rescaling.} 
We rescale the map analogously to Step 2 in Proposition \ref{p:stage}. Accordingly, 
$$
\tilde h:=\tilde g+\frac{r_0}{2C\delta^2}(\tilde g-\tilde u^\sharp e)
$$
satisfies 
$$
|\tilde h(x)-g_j|\leq \frac{r_0}{2C\delta^2}\|\tilde g-\tilde u^\sharp e\|_0 + \frac{3}{2}r_0\leq 2r_0\textrm{ in }U_j.
$$
Therefore, using Lemma \ref{l:Carat} for each $g_j$ and introducing
$$
u_0=\frac{1}{(1+Cr_0^{-1}\delta^2)^{1/2}}\tilde u
$$
we obtain (as in Proposition \ref{p:stage})
$$
\tilde g\,-\,u_0^\sharp e\,=\,\sum_{i=1}^{n_*}a_{i,j}^2\nu_{i,j}\otimes\nu_{i,j}\,\textrm{ in }U_j
$$
for some functions $a_{i,j}\in C^{\infty}(U_j)$ satisfying the estimates
$$
\|a_{i,j}\|_{C^{k+1}(U_j)}\leq C\delta\ell^{-(k+1)}\textrm{ for $j\in J$ and $k=0,1,\dots,(n+1)n_*$}.
$$
In particular, using the partition of unity $\{\psi_j\}$ we obtain
\begin{equation}\label{e:decomposition}
\tilde g\,-\,u_0^\sharp e\,=\,\sum_{j\in J}\sum_{i=1}^{n_*}(\psi_j a_{i,j})^2\nu_{i,j}\otimes\nu_{i,j},
\end{equation}
with
\begin{eqnarray}
\|u-u_0\|_1&\leq& C\delta,\label{e:globalrescaledu}\\
\|\psi_ja_{i,j}\|_0&\leq& C\delta,\\ 
\|u_0\|_{k+2}+\|\psi_ja_{i,j}\|_{k+1}&\leq& C\delta \ell^{-(k+1)}
\end{eqnarray}
for $k=0,1,\dots,(n+1)n_*$.

\medskip

{\bf Step 3. Iterating one--dimensional oscillations}
We now argue as in the Step 3 of the proof of Proposition
\ref{p:stage}. However, there are two
differences. First of all we apply Proposition \ref{p:step2}
in place of Proposition \ref{p:step}. This requires an additional control 
of the oscillation of $\nabla u$ in each $U_j$. 
Second, the number of steps is $(n+1)n_*$.
Indeed, observe that \eqref{e:decomposition} can be written as
\begin{equation*}
\tilde g\,-\,u_0^\sharp e\,=\,\sum_{\sigma=1}^{n+1}\sum_{i=1}^{n_*}\sum_{j\in J_\sigma}(\psi_j a_{i,j})^2\nu_{i,j}\otimes\nu_{i,j},
\end{equation*}
where the index set $J$ is decomposed as $J=J_1\cup\dots\cup J_{n+1}$ so that
$U_j\in\mathcal{F}_\sigma$ if and only if $j\in J_\sigma$. The point is that the sum in $j$ consists of functions with disjoint supports, and hence for this sum Proposition \ref{p:step2} can be performed in parallel, in one step.
Thus, the number of steps to be performed serially is the number of summands in $\sigma$ and $i$, which is precisely $(n+1)n_*$. 

To deal with the restriction on the oscillation of $u_k$ in each step, 
observe that ${\rm osc}_{U_j}\nabla u\leq \eta_0/2$ by assumption, and clearly the same holds for $u_0$. Also, at each step we have the estimate $\|u_{k+1}-u_k\|_1\leq C\delta\leq C\delta_0$. Therefore, choosing
$\delta_0>0$ sufficiently small (only depending on the constants and on $\eta_0$), we ensure that 
the condition remains satisfied inductively $(n+1)n_*$ times. 

Thus, proceeding as in the proof of Proposition \ref{p:stage} we 
apply Proposition \ref{p:step2} successively with $\ell_k=\ell K^{-k}$,  
$\lambda_k=K^{k+1}\ell^{-1}$, and $N_k=(n+1)n_*-k$. In this way we obtain a final map $v:=u_{(n+1)n_*}$ such that
\begin{eqnarray*}
\|v^{\sharp}e-\tilde g\|_0&\leq& C\delta^2\frac{1}{K},\\
\|v\|_2&\leq& C\mu\, K^{(n+1)n_*},\\
\|v-u_0\|_1&\leq &C\delta.
\end{eqnarray*}
The above inequalities combined with \eqref{e:globalmollifiedu}, 
\eqref{e:globalmollifiedg} and \eqref{e:globalrescaledu} imply
the estimates \eqref{e:globalerror}, \eqref{e:globalC2} and \eqref{e:globalC1}. 
This concludes the proof.

\subsection{Existence of minimal covers}

We fix a triangulation $T$ of $M$ with
simplices having diameter smaller than $\rho/3$. 
We let $S_0$ be the vertices of the triangulation,
$S_1$ be the edges, $S_k$ be the $k$--faces. 
$\mathcal{F}_0$ is made by pairwise disjoint balls centered on
the elements of $S_0$, with radius smaller than $\rho/2$. We let
$M_0$ be the union of these balls. Next,
for any element $\sigma\in S_1$, we consider
$\sigma'= \sigma\setminus M_0$. The 
$\sigma'$ are therefore pairwise disjoint compact
sets and we let $\mathcal{F}_1$ be a collection
of pairwise disjoint neighborhoods of $\sigma'$,
each with diameter less than $\rho$. We define
$M_1$ to be the union of the elements of $\mathcal{F}_1$
and $\mathcal{F}_0$. We proceed inductively.
At the step $k$, for every $k$--dim. face $F\in S_k$ 
we define $F'= F\setminus A_{k-1}$. Clearly, the $F'$
are pairwise disjoint compact sets and hence
we can find pairwise disjoint neighborhoods of
the $F'$ with diameter smaller than $\rho$. 
Figure \ref{f:triang} below shows the elements of 
$\mathcal{F}_i$ for a $2$--d triangulation. 

\begin{figure}[htbp]
    \input{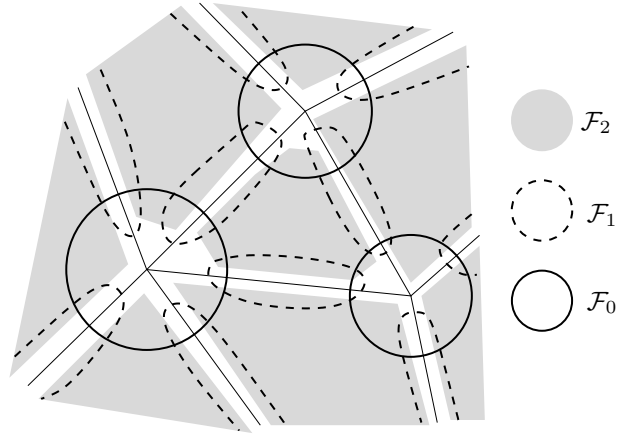}
    \caption{The triangulation $T$ and the covering
for a $2$--dimensional manifold.}
    \label{f:triang}
\end{figure}

Clearly, the collection $\mathcal{F}_0\cup \ldots
\cup \mathcal{F}_n$ covers any simplex of $T$, and hence 
is a covering of $M$.

\section{$h$--principle: iteration}\label{s:iteration}

\subsection{Proof of Theorem \ref{t:local}}
Let $\mu_0,\delta_0>0$ be such that
\begin{eqnarray*}
\|u^\sharp e-g\|_0 &\leq& \delta_0^2\\
\|u\|_2 &\leq& \mu_0.
\end{eqnarray*}
Let also $K\geq 1$. Later on we are going to adjust the parameters $\mu_0$ and $K$ 
in order to achieve the required convergence in $C^{1,\alpha}$.

Applying Proposition \ref{p:stage} successively, we obtain a sequence 
of maps $u_k\in C^2(\overline{\Omega},\R^{n+1})$ such that
\begin{eqnarray*}
\|u_{k}^\sharp e-g\|_0 &\leq& \delta_{k}^2\\
\|u_{k}\|_2 &\leq& \mu_{k}\\
\|u_{k+1}-u_{k}\|_1&\leq& C\delta_{k},
\end{eqnarray*} 
where
\begin{eqnarray}
\delta_{k+1}^2 &=& C 
\delta_{k}^2\left(\frac{1}{K}+\delta_{k}^{\beta-2}\mu_k^{-\beta}\right),
\label{e:will_get_worse}\\
\mu_{k+1} &=& C \mu_k K^{n_*}\label{e:will_get_absorbed}.
\end{eqnarray}
Substituting $K$ with $\max\, \{C^{1/n_*} K , K\}$ we can absorbe the constant
in \eqref{e:will_get_absorbed} to achieve $\mu_{k+1}= \mu_k K^{n_*}$,
at the price of getting a possibly worse constant in \eqref{e:will_get_worse}.
In particular $\mu_k=\mu_0K^{kn_*}$. 
Next, we show by induction that for any
\begin{equation}\label{e:acond}
a\;<\;
\min\left\{\frac{1}{2},\frac{\beta n_*}{2-\beta}\right\}
\end{equation}
there exists a suitable initial choice of $K$ and $\mu_0$ so that
$$
\delta_k\leq \delta_0\,K^{-ak}.
$$
The case $k=0$ is obvious. Assuming the inequality to hold for $k$, we have
$$
\delta_{k+1}^2\leq C 
\delta_0^2K^{-2ak-1}+C\delta_0^{\beta}\mu_0^{-\beta}K^{-\beta k(a+n_*)}.
$$
Therefore $\delta_{k+1}\leq \delta_0 K^{-a(k+1)}$ provided
$$
2C \leq K^{1-2a}\,\textrm{ and }\,2C\leq 
\mu_0^{\beta}\delta_0^{2-\beta}K^{k[\beta(a+n_*)-2a]-2a}.
$$
By choosing first $K$ and then $\mu_0\geq \|u\|_2$ sufficiently large, 
these two inequalities can be satisfied for any given 
$a$ in the range prescribed in \eqref{e:acond}. This proves our claim.

Next we show that for any
\begin{equation}\label{e:alphacond}
\alpha < \min\left\{ \frac{1}{1+2n_*}, \frac{\beta}{2}\right\}
\end{equation}
the parameters $\mu_0$ and $K$ can be chosen so that the sequence $u_k$ converges in $C^{1,\alpha}(\Omega;\R^{n+1})$. To this end observe that to any $\alpha$ satisfying \eqref{e:alphacond} there exists an $a$ satisfying \eqref{e:acond} such that
$$
\alpha<\frac{a}{a+n_*}.
$$
Then, choosing $\mu_0$ and $K$ sufficiently large as above, we obtain a sequence $u_k$ such that
\begin{eqnarray*}
\|u_{k+1}-u_k\|_1 &\leq& C\delta_0\,K^{-ak}\\
\|u_{k+1}-u_k\|_2 &\leq& \mu_{k+1}+\mu_k \leq 2\mu_0K^{(k+1)n_*}.
\end{eqnarray*}
Therefore, by interpolation 
\begin{equation*}
\begin{split}
\|u_{k+1}-u_k\|_{1,\alpha} &\leq  \|u_{k+1}-u_k\|_1^{1-\alpha}\|u_{k+1}-u_k\|_2^{\alpha}\\
& \leq \tilde C\, K^{-[(1-\alpha)a-\alpha n_*]k}.
\end{split}
\end{equation*}
Thus the sequence converges in $C^{1,\alpha}$ to some limit map $v\in C^{1,\alpha}(\overline{\Omega};\R^{n+1})$. Since $\delta_k\to 0$, the limit satisfies $v^\sharp e=g$ in $\Omega$.

Finally, choosing $K$ so large that $K^{-a}\leq 1/2$, we have
$$
\|v-u\|_1\leq C\delta_0\sum_kK^{-ak}\leq 2C\delta_0.
$$

\subsection{Proof of Theorem \ref{t:global}}
Recall from Section \ref{s:globalstage} that for the whole construction we work with a fixed atlas $\{\Omega_i\}$ of the manifold $M$, and that to the given metric $g\in C^\beta(M)$ there exist constants $\gamma>1$ and $\rho_0>0$ such that \eqref{e:gamma} and \eqref{e:radius} hold. 

Since $u\in C^2(M;\R^m)$ and there are a finite number of charts 
$\Omega_i$, there exists $\rho<\rho_0$ such that 
$$
\textrm{ osc}_U\nabla u<\eta_0/4\quad\textrm{ whenever }U\subset M\textrm{ with }\textrm{diam }U<\rho.
$$
Fix a minimal cover $\mathcal{C}$ of $M$ with diameter $\rho$ and
let $\mu_0,\delta_0>0$ be such that
\begin{eqnarray*}
\|u^\sharp e-g\|_0 &\leq& \delta_0^2\\
\|u\|_2 &\leq& \mu_0.
\end{eqnarray*}
The iteration now proceeds with respect to this fixed cover, parallel to the 
proof of Theorem \ref{t:local}. More precisely, arguing as in 
in Theorem \ref{t:local},
Proposition \ref{p:stage2} yields a sequence $u_k\in C^2(M;\R^m)$ with
\begin{eqnarray*}
\|u_{k}^\sharp e-g\|_0 &\leq& \delta_{k}^2\\
\|u_{k}\|_2 &\leq& \mu_0\,K^{k(n+1)n_*}\\
\|u_{k+1}-u_{k}\|_1&\leq& C\delta_{k},
\end{eqnarray*} 
where
\begin{equation*}
\delta_{k+1}^2 = C \delta_{k}^2\left(\frac{1}{K}+\delta_{k}^{\beta-2}
K^{-\beta k(n+1)n_*}\right).
\end{equation*}
The proof that $\mu_0$ and $K$ can be chosen so that $u_k$ converges in $C^{1,\alpha}$ for 
\begin{equation*}
\alpha < \min\left\{ \frac{1}{1+2(n+1)n_*}, \frac{\beta}{2}\right\}
\end{equation*}
follows entirely analogously. Recall that this argument yields in particular
$$
\delta_k\leq \delta_0 K^{-ak}.
$$
The only difference is that the estimates
\eqref{e:globalerror0} and \eqref{e:globalosc} need to be fulfilled at each stage.
To this end note that $\delta_k\leq \delta_0$, so that \eqref{e:globalerror0} will hold at stage $k$ if it holds at the initial stage. Moreover,
$$
{\rm osc}_U\nabla u_k\leq {\rm osc}_U\nabla u + \sum_{j=0}^{k-1} 2\|u_{j+1}-u_j\|_1 \leq
\frac{\eta_0}{4} + 2C\delta_0\sum_jK^{-aj}\leq\frac{\eta_0}{4}+ 4C\delta_0,
$$
so that \eqref{e:globalosc} is fulfilled by $u_k$ provided $\delta_0$ is sufficiently small (depending only on the various constants).

\subsection{Proof of Corollaries \ref{c:local} and \ref{c:global}}\label{ss:emb} 
The corollaries are a direct consequence of the Nash-Kuiper theorem combined with Theorems \ref{t:local} and \ref{t:global} respectively. For simplicity, we allow $M$ to be
either $\overline{\Omega}$ for a smooth bounded open set 
$\Omega\subset\R^n$ or a compact Riemannian manifold
of dimension $n$, and assume that $g\in C^{\beta}(M)$ is satisfying either
the assumptions of Theorem \ref{t:local}
or those of Theorem \ref{t:global}. We then
set $\alpha_0 = \min\{(2n_*+1)^{-1},\beta/2\}$ in the first
case, and $\alpha_0 = \min \{(2(n+1)n_*+1)^{-1},\beta/2\}$ in the second.

Let $u\in C^1(M;\R^m)$ be a short map and $\e>0$. We may assume without loss of generality that $\e<\delta_0$.
Using the Nash-Kuiper theorem together with a standard regularization, there exists $u_0\in C^2(M;\R^m)$
such that 
\begin{eqnarray*}
\|u-u_0\|_1 &\leq& \e/2,\\
\|u_0^{\sharp}e - g\|_0 &\leq& \left(\frac{\e}{2C}\right)^2,
\end{eqnarray*}
where $C$ is the constant in Theorems \ref{t:local} and \ref{t:global} respectively. Then the theorem, applied to $u_0$, yields an isometric immersion $v\in C^{1,\alpha}(M;\R^m)$ for any $\alpha<\alpha_0$, such that 
$\|v-u_0\|_1\leq \e/2$, so that $\|v-u\|_1\leq \e$. This proves the corollaries.

We now come to Remark \ref{r:emb}. This follows immediately from the fact that the Nash-Kuiper theorem also works for embeddings, and that the set of embeddings of a compact manifold is an open set in $C^1(M;\R^m)$. Indeed, if $u$ is an embedding, the Nash-Kuiper theorem gives the existence of an embedding $u_0$  with the estimates above. Ensuring in addition that $\e$ is so small that any map $v\in C^1(M;\R^m)$ with $\|v-u\|_1\leq\e$ is an embedding, we reach the required conclusion.

\section{Rigidity: Proof of Theorem \ref{t:2/3}}\label{s:rigidity}

\subsection{Curvature and Brouwer degree}
Let $(M,g)$ be as in Theorem \ref{t:2/3}. 
As usual, we denote by $dA$ the area element in $M$
and by $\kappa$ the Gauss curvature of $(M,g)$. 
Consider next a $C^2$ isometric embedding $v: M \to \R^3$.
The unit normal $N (p)$ to $v (M)$ is the unique vector 
of $\R^3$ such that, given a positively oriented 
basis $e_1, e_2$ for $T_p (M)$, 
the triple $(dv_p (e_1), dv_p (e_2), N (p))$ is an
orthonormal positively oriented frame of $\R^3$.

As it is well known, if $d\sigma$ denotes the
area element in $\S^2$, then $N^\sharp d\sigma = \kappa dA$.
Therefore, for every open set $V\subset\subset M$ and
for every $f \in C^1 (\S^2)$, the usual change of variable
formula yields
\begin{equation}\label{e:ChangeOfVar}
\int_V f(N(x)) \kappa (x)\, dA(x)\;=\;
\int_{\S^2}f(y)\deg(y,V,N)\, d\sigma(y),
\end{equation}
where $\deg (y, V, N)$ denotes the Brouwer
degree of the map $N$. Though the differential definition
of $\deg$ makes sense only for regular values of $N$,
it is a classical observation that $\deg$ is constant on
connected components of $\S^2\setminus N (\partial V)$.
Thus it has a unique continuous extension to 
$\S^2\setminus N (\partial V)$, which will be denoted as well
by $\deg$.

Consider next an isometric embedding $v\in C^1$.
In this case $N\in C^0$. The Brouwer degree
$\textrm{\deg} (y, V, N)$ can still be defined and we recall 
the following well-known theorem.

\begin{theorem}\label{t:deg}
Let $N\in C (V, \S^2)$ and $\{N_k\} \subset
C^\infty (V, \S^2)$ be a sequence converging uniformly to $N$.
Let $K \subset \S^2\setminus N (\partial V)$ be a closed set. 
For any $k$ sufficiently large, $\deg (\cdot, V, N_k)
\equiv \deg (\cdot, V, N)$ on $K$. 
\end{theorem}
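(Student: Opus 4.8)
The plan is to reduce the statement to the homotopy invariance of the Brouwer degree, exploiting compactness of $\overline V$ (recall $V\subset\subset M$, consistently with \eqref{e:ChangeOfVar}). First I would set $d:=\dist(K,N(\partial V))$, where distances on $\S^2$ are measured as the ambient distances in $\R^3$. Since $\partial V$ is compact and $N$ continuous, $N(\partial V)$ is compact; since $K$ is a closed subset of the compact space $\S^2$ it is compact; and $K\cap N(\partial V)=\emptyset$ by hypothesis. Hence $d>0$. By uniform convergence there is $\bar k$ such that $\|N_k-N\|_{C^0(\overline V)}<d/2$ for every $k\ge\bar k$. For such $k$ and every $x\in\partial V$ one has $\dist(N_k(x),K)\ge\dist(N(x),K)-|N_k(x)-N(x)|\ge d/2$, so $K\cap N_k(\partial V)=\emptyset$ and $\deg(\cdot,V,N_k)$ is indeed defined on all of $K$; it remains to show it coincides there with $\deg(\cdot,V,N)$.

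Fix $k\ge\bar k$ and $y\in K$. Because $N(x)$ and $N_k(x)$ are unit vectors at distance $<d/2\le 1$ they are never antipodal, so their convex combinations never vanish and
\[
H^k_t(x)\;:=\;\frac{(1-t)N(x)+tN_k(x)}{\bigl|(1-t)N(x)+tN_k(x)\bigr|}
\]
defines a continuous homotopy $\overline V\times[0,1]\to\S^2$ from $N$ to $N_k$. The point is that it never meets $y$ on the boundary: writing $w:=(1-t)N(x)+tN_k(x)$, we have $|w|\le 1$ and $|w-N(x)|\le t\,|N_k(x)-N(x)|<d/2$, hence $1-|w|\le|w-N(x)|<d/2$ and therefore
\[
|H^k_t(x)-N(x)|\;\le\;\bigl|\tfrac{w}{|w|}-w\bigr|+|w-N(x)|\;=\;(1-|w|)+|w-N(x)|\;<\;d
\]
for all $x\in\partial V$, $t\in[0,1]$. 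Since $|N(x)-y|\ge\dist(K,N(\partial V))=d$ on $\partial V$, this gives $|H^k_t(x)-y|>0$ throughout $\partial V\times[0,1]$. By homotopy invariance of the Brouwer degree, $\deg(y,V,N)=\deg(y,V,N_k)$; as $y\in K$ and $k\ge\bar k$ were arbitrary, the theorem follows.

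There is no serious obstacle here; the two points that require a word of care are the following. First, $N$ is only continuous, so $H^k$ is merely continuous; this is harmless, since the homotopy invariance of the degree holds for continuous maps and the degree of the continuous map $N$ is the one recalled before the statement (obtained by approximation and locally constant off $N(\partial V)$) --- indeed, running the same argument with $N_k$ replaced by an arbitrary smooth $C^0$-approximation of $N$ is precisely what makes that definition consistent. Second, everything takes place among maps of the oriented surface-with-boundary $\overline V$ into $\S^2$, both two-dimensional, so the relevant notion is the degree entering \eqref{e:ChangeOfVar}; this is why I would use the normalized homotopy $H^k$ rather than the plain affine interpolation into $\R^3$, so as not to leave this two-dimensional setting. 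The only computation involved is the elementary three-line estimate displayed above.
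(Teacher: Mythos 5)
The paper does not actually prove Theorem~\ref{t:deg}: it is recalled as a ``well-known theorem'' about the Brouwer degree for continuous maps, with no argument given. So there is nothing to compare against directly; one must judge your proposal on its own. Your argument is correct. The compactness step ($d=\dist(K,N(\partial V))>0$), the choice of $\bar k$ with $\|N_k-N\|_{C^0(\overline V)}<d/2$, and the normalized homotopy $H^k_t$ staying inside $\S^2$ are all appropriate, and the three-line estimate
\[
|H^k_t(x)-N(x)|\;\le\;(1-|w|)+|w-N(x)|\;<\;d
\]
is right: both terms are bounded by $|w-N(x)|<d/2$ (the first via $1-|w|=|N(x)|-|w|\le|w-N(x)|$). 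Combined with $|N(x)-y|\ge d$ for $x\in\partial V$, $y\in K$, this shows $H^k_t(\partial V)$ avoids $K$ for every $t$, and homotopy invariance of the degree for continuous maps gives the conclusion. Your remark about why you normalize the affine interpolation --- so as to stay among maps between oriented $2$-manifolds, where $\deg(\cdot,V,\cdot)$ is the degree used in \eqref{e:ChangeOfVar} --- is the right thing to flag, and avoids any ambiguity about which notion of degree is being invoked. This is a clean, self-contained proof of a fact the paper leaves to the reader.
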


Thus $\deg (\cdot , V, N)\in L^1_{loc} (\S^2\setminus N (\partial V))$.
A key step to the proof of Theorem \ref{t:2/3} is to show
that formula \eqref{e:ChangeOfVar} 
holds for $v\in C^{1,\alpha}$ with $\alpha>2/3$.

\begin{proposition}\label{p:ChangeOfVar}
Let $v\in C^{1,\alpha} (M, \R^3)$ be
an isometric embedding with $\alpha>2/3$.
Then \eqref{e:ChangeOfVar} holds for every open set
$V\subset\subset M$ diffeomorphic to a subset of $\R^2$ 
and every $f\in L^\infty$ with $\supp (f)\subset
\S^2\setminus N (\partial V)$. 
\end{proposition}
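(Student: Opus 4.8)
\textbf{Proof proposal for Proposition \ref{p:ChangeOfVar}.}

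The plan is to obtain \eqref{e:ChangeOfVar} for $v\in C^{1,\alpha}$, $\alpha>2/3$, by regularizing $v$ via convolution and passing to the limit in the identity \eqref{e:ChangeOfVar} which holds for the smooth approximations. Fix an open set $V\subset\subset M$ diffeomorphic to a subset of $\R^2$ and work in a single chart, so that we may treat $v$ as a map on a bounded open subset of $\R^2$. Set $v_\ell = v*\varphi_\ell$ with $\varphi$ a standard symmetric kernel; these are smooth embeddings (for $\ell$ small, by $C^1$-closeness) and converge to $v$ in $C^1$. For each $v_\ell$ the classical change of variables gives
\begin{equation*}
\int_V f(N_\ell(x)) \kappa_\ell(x)\, dA_\ell(x)\;=\;\int_{\S^2} f(y)\deg(y,V,N_\ell)\,d\sigma(y),
\end{equation*}
where $N_\ell$, $\kappa_\ell$, $dA_\ell$ are the normal, Gauss curvature and area element of $v_\ell^\sharp e$. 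Since $N_\ell\to N$ uniformly, the right-hand side converges to $\int_{\S^2} f(y)\deg(y,V,N)\,d\sigma(y)$ by Theorem \ref{t:deg} and dominated convergence, using $\supp f\subset\S^2\setminus N(\partial V)$ (note $N_\ell(\partial V)\to N(\partial V)$ uniformly, so eventually $\supp f$ stays away from $N_\ell(\partial V)$ as well). The entire difficulty is therefore to show that the left-hand side converges to $\int_V f(N(x))\kappa(x)\,dA(x)$.

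The key point is that $\kappa_\ell\,dA_\ell$ is \emph{not} computed from the metric $g=v^\sharp e$ — which is only $C^2$ and would give a perfectly good limit — but from the \emph{pulled-back} metric $v_\ell^\sharp e$. This is where Proposition \ref{p:improv} enters: since $v\in C^{1,\alpha}$ with $\alpha>2/3>1/2$ and $v^\sharp e=g\in C^2$, the quadratic estimate \eqref{e:improv} gives $\|v_\ell^\sharp e - g\|_{C^1}=O(\ell^{2\alpha-1})\to 0$, hence the Christoffel symbols of $v_\ell^\sharp e$ converge uniformly to those of $g$, and consequently $dA_\ell\to dA$ uniformly. The Gauss curvature, however, involves \emph{second} derivatives of the metric, which \eqref{e:improv} does not control; so $\kappa_\ell$ need not converge pointwise, and one cannot pass to the limit naively in the integrand. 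Instead the plan is to integrate by parts: using the Gauss equation, $\kappa_\ell\,dA_\ell$ can be written (in local coordinates) as a divergence of an expression that is a universal smooth function of the metric $v_\ell^\sharp e$ and its \emph{first} derivatives only — this is the classical fact that $\kappa\sqrt{\det g}$ is a total divergence in the first derivatives of $g$. More precisely one uses the identity expressing $\kappa\,dA$ through the one-form built from Christoffel symbols, so that $f(N_\ell)\kappa_\ell\,dA_\ell$, after one integration by parts moving a derivative onto $f(N_\ell)$, becomes an integral of $\nabla(f\circ N_\ell)$ against quantities bounded in terms of $\|v_\ell^\sharp e\|_{C^1}$; the boundary term vanishes because $\supp f$ avoids $N(\partial V)$.

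After this integration by parts all the terms involve only $N_\ell$, $\nabla N_\ell$ and first derivatives of $v_\ell^\sharp e$. Now $\nabla N_\ell$ is problematic since $N$ is only $C^0$ — so before integrating by parts one should also move the derivative that would fall on $N_\ell$ back onto the metric coefficients, arranging matters so that the final expression pairs $f\circ N_\ell$ (no derivatives on $N$) against a quantity that is a smooth function of $v_\ell^\sharp e$ and $\nabla(v_\ell^\sharp e)$; for this one uses that $N_\ell$ is itself an algebraic function of $\nabla v_\ell$, and that the relevant combination $\kappa_\ell\,dA_\ell$ can be recast, via the Gauss–Bonnet integrand, as an exact form whose coefficients depend only on $g_\ell:=v_\ell^\sharp e$ and $\nabla g_\ell$. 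Then $\|g_\ell-g\|_{C^1}\to 0$ by Proposition \ref{p:improv} forces the coefficients to converge uniformly to the corresponding coefficients built from $g$, while $f\circ N_\ell\to f\circ N$ pointwise a.e.\ (with uniform bound $\|f\|_\infty$), and dominated convergence yields convergence of the left-hand side to $\int_V f(N)\kappa\,dA$. Matching the two limits gives \eqref{e:ChangeOfVar} for $v$. The main obstacle, and the place where $\alpha>2/3$ is used in an essential way, is precisely step (3): showing that the curvature-times-area measure of the regularization, although built from the unstable quantity $v_\ell^\sharp e$ rather than from $g$, can be rewritten as a divergence depending only on the \emph{first} derivatives of $v_\ell^\sharp e$ and is therefore controlled by the quadratic estimate \eqref{e:improv}, which is exactly the exponent threshold at which $\ell^{2\alpha-1}\to0$.
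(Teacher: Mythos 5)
Your overall strategy — regularize $v$ by convolution, use the classical change of variables for $v_\ell$, and pass to the limit on both sides using Theorem \ref{t:deg} and Proposition \ref{p:improv} — is the same as the paper's. You correctly identify the right-hand side as unproblematic and the left-hand side $\int_V f(N_\ell)\kappa_\ell\,dA_\ell$ as the crux, and you correctly locate the difficulty in the fact that $\kappa_\ell$ involves second derivatives of $g_\ell=v_\ell^\sharp e$, which \eqref{e:improv} does not control. Up to this point you are on the paper's track.

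The gap is in the way you propose to resolve that difficulty. You assert that $f(N_\ell)\kappa_\ell\,dA_\ell$ can be ``arranged'' so that, after an integration by parts, the final integrand pairs $f\circ N_\ell$ \emph{with no derivative on $N_\ell$} against an expression depending \emph{only} on $g_\ell$ and $\nabla g_\ell$, and that uniform convergence of $g_\ell\to g$ in $C^1$ plus dominated convergence then finishes the proof. This cannot work, for two independent reasons. First, $\kappa$ genuinely contains second derivatives of $g$; if the integrand were expressible purely through $g_\ell$, $\nabla g_\ell$, and $N_\ell$ with no derivative on $N_\ell$, then the limit of the first-order coefficients (built from $g$, $\nabla g$) could not reproduce $\kappa\,dA$. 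The exact-form identity $\kappa\,dA=d\omega(g,\nabla g)$ is true, but testing it against $f(N_\ell)$ necessarily produces $d(f\circ N_\ell)\wedge\omega_\ell$ after Stokes — you cannot eliminate the derivative of $N_\ell$ and simultaneously stay first-order in $g_\ell$. Second, your argument as stated would need only $\alpha>1/2$ (the threshold for $\|g_\ell-g\|_{C^1}\to 0$), yet the proposition requires $\alpha>2/3$; so the mechanism by which the stronger threshold enters is missing.

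What the paper actually does at this point (Proposition \ref{p:conv}) is a two-step integration by parts that embraces, rather than avoids, the derivative of $N^\e$. Writing $\kappa=(\det g)^{-1}\bigl(c_{ijkl}\partial_{kl}g_{ij}+\text{first-order}\bigr)$, one sets $\psi^\e=f(N^\e)(\det g^\e)^{-1/2}$ and integrates by parts once to produce $\int_V \partial_k\psi^\e\,\partial_l g^\e_{ij}$. Then one splits
\begin{equation*}
\int_V \partial_k\psi^\e\,\partial_l g^\e_{ij}
=\int_V \partial_k\psi^\e\,\bigl(\partial_l g^\e_{ij}-\partial_l g_{ij}\bigr)
+\int_V \partial_k\psi^\e\,\partial_l g_{ij}\,.
\end{equation*}
The first integral is where $\alpha>2/3$ is used: $\|\partial_k\psi^\e\|_0=O(\e^{\alpha-1})$ (since $N^\e$ is an algebraic function of $\nabla v^\e$ and $\nabla v\in C^\alpha$), while \eqref{e:improv} gives $\|\partial_l g^\e_{ij}-\partial_l g_{ij}\|_0=O(\e^{2\alpha-1})$; the product is $O(\e^{3\alpha-2})\to 0$ precisely when $\alpha>2/3$. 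The second integral is integrated by parts \emph{back} to $\int_V\psi^\e\,\partial_{kl}g_{ij}$, which now involves the true metric $g\in C^2$, so $\partial_{kl}g_{ij}$ is bounded continuous and $\psi^\e\to\psi$ uniformly. This two-step scheme — trade the uncontrolled second derivative of $g^\e$ for the uncontrolled first derivative of $N^\e$, then absorb it by pairing with the quadratic error $\nabla g^\e - \nabla g$ — is the essential idea your proposal is missing.
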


In order to deal with $N (\partial V)$ we recall
the following elementary fact. 

\begin{lemma}\label{l:nullset}
Let $M$ and $\tilde M$ be $2$-dimensional Riemannian manifolds
and $N\in C^{0,\beta} (M, \tilde M)$ with $\beta > 1/2$. If
$E\subset M$ has Hausdorff dimension $1$, 
then the area of $N(E)$ is $0$.   
\end{lemma}

The following is then a corollary of Proposition \ref{p:ChangeOfVar}
and Lemma \ref{l:nullset}.

\begin{corollary}\label{c:ChangeOfVar2}
Let $(M,g)$ and $v$ be as in Proposition \ref{p:ChangeOfVar},
with $\kappa\geq 0$. For any open $V\subset \subset M$,
$\deg (\cdot, V, N)$ is a nonnegative $L^1$ function and 
\eqref{e:ChangeOfVar} holds for every $f\in L^\infty (\S^2\setminus
N (\partial V))$. 
\end{corollary}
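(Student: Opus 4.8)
The plan is to bootstrap the statement from Proposition \ref{p:ChangeOfVar}, first in the case $V$ is diffeomorphic to a subset of $\R^2$ and then, by an additivity argument for the Brouwer degree, in general. The one preliminary observation I would record is that, since $N\in C^{0,\alpha}$ with $\alpha>2/3>1/2$, Lemma \ref{l:nullset} applies to $N$; hence $N(\partial V)$, as well as the image under $N$ of any set of Hausdorff dimension one, has $\sigma$-measure zero. In particular $L^\infty(\S^2\setminus N(\partial V))$ coincides with $L^\infty(\S^2)$ and the various ``up to $\sigma$-null sets'' manipulations below are harmless; I will also use that $\kappa$ is continuous, so $\int_{\overline V}|\kappa|\,dA<\infty$ for the relatively compact $V$.

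\smallskip
\emph{Case $V$ diffeomorphic to a subset of $\R^2$.} Here Proposition \ref{p:ChangeOfVar} is available. Testing \eqref{e:ChangeOfVar} against a nonnegative $f\in C_c(\S^2\setminus N(\partial V))$ and using $\kappa\ge 0$ shows that the (integer--valued, locally constant) function $\deg(\cdot,V,N)$ is $\ge 0$ on each connected component of $\S^2\setminus N(\partial V)$, hence on all of $\S^2\setminus N(\partial V)$. Next, choosing $f_k\in C_c(\S^2\setminus N(\partial V))$ with $0\le f_k\uparrow 1$ and applying monotone convergence in \eqref{e:ChangeOfVar} bounds $\int_{\S^2}\deg(\cdot,V,N)\,d\sigma$ by $\int_{\overline V}|\kappa|\,dA<\infty$, so $\deg(\cdot,V,N)\in L^1$. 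Finally, \eqref{e:ChangeOfVar} holds for $f=\mathbf{1}_\omega$ with $\omega\subset\subset\S^2\setminus N(\partial V)$ open (a legitimate test function for Proposition \ref{p:ChangeOfVar}); by inner regularity the two sides of \eqref{e:ChangeOfVar} therefore define the same finite Borel measure on $\S^2\setminus N(\partial V)$, and approximating a general $f\in L^\infty$ uniformly by simple functions together with dominated convergence yields \eqref{e:ChangeOfVar} in full.

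\smallskip
\emph{General $V$.} Fix a triangulation of $M$ and let $\Sigma$ be its $1$--skeleton, so $\Sigma\cap\overline V$ is compact of Hausdorff dimension one and $N(\Sigma\cap\overline V)$ is $\sigma$-null. Only finitely many open $2$--simplices $\Delta_1,\dots,\Delta_q$ meet $\overline V$; set $V_j:=V\cap\mathrm{int}\,\Delta_j$. The $V_j$ are pairwise disjoint, each relatively compact and diffeomorphic to a subset of $\R^2$, one has $V\setminus\bigcup_j V_j\subset\Sigma$ (of zero area) and $\overline V\setminus\bigcup_j V_j\subset\partial V\cup\Sigma$. By the combined additivity and excision property of the Brouwer degree --- valid for the continuous map $N$ by approximation through Theorem \ref{t:deg} --- one obtains
\[
\deg(\cdot,V,N)=\sum_{j=1}^q\deg(\cdot,V_j,N)\qquad\text{on }\S^2\setminus\bigl(N(\partial V)\cup N(\Sigma)\bigr),
\]
a conull subset of $\S^2\setminus N(\partial V)$. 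Since $\partial V_j\subset\partial V\cup\Sigma$, each $V_j$ falls under the previous case; summing the corresponding identities over $j$, using $\sum_j\int_{V_j}\kappa=\int_{\bigcup_j V_j}\kappa\le\int_{\overline V}|\kappa|\,dA<\infty$ and that $V\setminus\bigcup_j V_j$ has zero area, yields the nonnegativity of $\deg(\cdot,V,N)$, its integrability, and \eqref{e:ChangeOfVar} for every $f\in L^\infty(\S^2\setminus N(\partial V))$.

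\smallskip
The interchanges of limits and the bookkeeping with the $\sigma$-null sets $N(\partial V)$, $N(\partial V_j)$, $N(\Sigma)$ are routine once $\alpha>1/2$ is in hand via Lemma \ref{l:nullset}. The step I expect to require the most care is the seam identity displayed above: one must verify that the classical additivity and excision of the Brouwer degree survive the passage from smooth to merely continuous $N$ (this is where Theorem \ref{t:deg} enters) and that the only exceptional set these properties introduce is $N(\overline V\setminus\bigcup_j V_j)\subset N(\partial V)\cup N(\Sigma)$, which is $\sigma$-null precisely because $\alpha>1/2$.
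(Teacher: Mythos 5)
Your proof is correct and takes a genuinely different route from the paper's. Both proofs begin with the same first step (Proposition \ref{p:ChangeOfVar} plus monotone convergence for $V$ diffeomorphic to a subset of $\R^2$). For general $V$, however, the paper proceeds in \emph{two} further steps: it first treats $V$ with smooth boundary by decomposing it into finitely many nonoverlapping Lipschitz pieces diffeomorphic to planar domains and invoking additivity of the degree together with Lemma \ref{l:nullset}; then it handles an arbitrary $V$ by choosing a nearby $V'$ with smooth boundary and observing that $\deg(\cdot,V,N)$ and $\deg(\cdot,V',N)$ agree on $\supp f$ while $\supp f(N(\cdot))\subset V'$. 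You instead go directly: intersect $V$ with the open $2$--simplices of a triangulation, so the resulting $V_j$'s are each diffeomorphic to a planar domain, are pairwise disjoint, and leave out only a subset of the $1$--skeleton $\Sigma$. Additivity/excision of the degree, valid for continuous $N$ via Theorem \ref{t:deg}, gives $\deg(\cdot,V,N)=\sum_j\deg(\cdot,V_j,N)$ off the $\sigma$-null set $N(\Sigma)$, and everything else follows by summing the $V_j$ identities and noting that $V\setminus\bigcup_j V_j\subset\Sigma$ has zero area. This eliminates the approximation-by-$V'$ step entirely and is, I think, a cleaner bookkeeping: the only exceptional sets that appear are $N$-images of genuinely $1$--dimensional sets, to which Lemma \ref{l:nullset} applies directly. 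The degree of care you apply to the seam identity and the null sets matches that of the paper's argument.

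One preliminary remark you make is inaccurate, though it does not affect the argument: Lemma \ref{l:nullset} does \emph{not} imply that $N(\partial V)$ is $\sigma$-null for an arbitrary open $V$, because $\partial V$ need not have Hausdorff dimension $1$; hence it is not true in general that $L^\infty(\S^2\setminus N(\partial V))$ coincides with $L^\infty(\S^2)$. The null sets you actually rely on in the body of the proof are $N(\Sigma)$ and $N(\partial V_j\cap\Sigma)$, where $\Sigma$ has Hausdorff dimension $1$, so the argument is unaffected; just delete or correct that aside.
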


\subsection{Proof of Proposition \ref{p:ChangeOfVar}}

By a standard approximation argument, it suffices to prove
the statement when $f$ is smooth. Under this additional assumption
the proof is a direct consequence of Theorem \ref{t:deg} and
of the convergence result below, which is a consequence
of Proposition \ref{p:improv}.
Since $V$ is 
diffeomorphic to an open set of the euclidean plane,
we can consider global coordinates $x_1, x_2$ on it.
Fix a symmetric kernel $\varphi\in C^\infty_c (\R^2)$,
set $\varphi_\e (x) = \e^{-2} \varphi (x/\e)$ and let $v^\e:=
(v{\bf 1}_V) * \varphi_\e$ (we consider
here the convolution of the two functions in $\R^2$ using
the coordinates $x_1,x_2$ and the corresponding Lebesgue measure).

\begin{proposition}\label{p:conv}
Let $v$ and $v^\e$ be defined as above
and denote by $N^\e$,
$g^\e$, $A^\e$ and $\kappa^\e$ respectively, the normal
to $v^\e (M)$, the pull-back of the metric
on $v^\e (M)$, and the corresponding area element
and Gauss curvature. Then,
\begin{equation}\label{e:conv}
\lim_{\e\downarrow 0} \int_V f (N^\e)\kappa^\e\, dA^\e
\;=\; \int_V f(N) \kappa\, dA\, \quad
\forall f\in C^\infty_c (\S^2\setminus N (\partial V))\, .
\end{equation} 
\end{proposition}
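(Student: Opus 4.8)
The plan is to pass to the limit in the expression $\int_V f(N^\e)\kappa^\e\,dA^\e$ using the fact that, by the change-of-variables formula, this integrand is a pointwise-defined expression built from $v^\e$, $\nabla v^\e$, and $\nabla^2 v^\e$, combined in a quadratic way that is amenable to the estimates of Section~\ref{s:convolutions}. First I would write $\kappa^\e\,dA^\e$ intrinsically in terms of the pulled-back metric $g^\e = (v^\e)^\sharp e$: the Gaussian curvature of a surface with metric $h$ is a universal rational expression in $h_{ij}$, $\partial_k h_{ij}$, $\partial_k\partial_l h_{ij}$ (via the Brioschi formula), so $\kappa^\e\,dA^\e$ is a function of $g^\e$ and its first two derivatives, continuous as long as $g^\e$ stays uniformly positive definite. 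Likewise $N^\e = \partial_1 v^\e \wedge \partial_2 v^\e/|\partial_1 v^\e\wedge\partial_2 v^\e|$ depends only on $\nabla v^\e$.

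The key analytic input is Proposition~\ref{p:improv}: since $v\in C^{1,\alpha}$ with $\alpha>2/3 > 1/2$ and $v^\sharp e = g \in C^2$ (the given metric, which is $C^2$ by hypothesis), we have $\|g^\e - g\|_{C^1(K)} = O(\e^{2\alpha-1}) \to 0$ on every compact $K\subset V$. I would want one more derivative's worth of control, namely that $g^\e \to g$ in $C^{1}$ together with boundedness of the relevant second-derivative combinations; in fact the Brioschi formula involves $\partial^2 h$ only linearly, and the second-derivative terms can be grouped so that what actually appears is $\partial^2 g^\e$ paired against the $C^1$-convergent quantities, while one also uses $g^\e \to g$ uniformly with $g\in C^2$ so that $\partial^2 g^\e = \partial^2(g*\varphi_\e) + \partial^2(g^\e - g*\varphi_\e)$, the first term converging to $\partial^2 g$ and the second being $O(\e^{2\alpha-1})$ in $C^0$ by Proposition~\ref{p:improv} differentiated once more (equivalently, applying the commutator estimate~\eqref{e:mollify3} with $r=2$). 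This gives $\kappa^\e\,dA^\e \to \kappa\,dA$ in, say, $L^1_{loc}(V)$, or at least weakly-$*$ against continuous test functions, which is all that is needed. Simultaneously $\nabla v^\e \to \nabla v$ uniformly on compacta (standard mollification, since $v\in C^1$), hence $N^\e \to N$ uniformly on compacta, so $f(N^\e) \to f(N)$ uniformly. Combining the uniform convergence of $f(N^\e)$ with the $L^1_{loc}$ convergence of the curvature measures, and using that $f$ is bounded and the total curvature $\int_V|\kappa^\e|\,dA^\e$ stays bounded, yields~\eqref{e:conv} after a cutoff argument to handle a neighborhood of $\partial V$ (where uniform convergence of $\nabla v^\e$ may fail because of the truncation ${\bf 1}_V$): since $\supp f \subset \S^2\setminus N(\partial V)$ and $N$ is continuous, for $x$ near $\partial V$ one has $N(x)\notin\supp f$, and by uniform convergence of $N^\e$ on a slightly smaller compact set the same holds for $N^\e(x)$, so the boundary layer contributes nothing in the limit.

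The main obstacle I anticipate is the second-derivative bookkeeping: a priori $v^\e$ has $\|\nabla^2 v^\e\|_{C^0} \sim \e^{\alpha - 1}$, which blows up, and naively $\kappa^\e$ could blow up like $\e^{2(\alpha-1)}$. The whole point — and the reason $\alpha > 2/3$ enters rather than $\alpha>1/2$ — is that $\kappa^\e\,dA^\e$ is \emph{not} a generic quadratic expression in $\nabla^2 v^\e$ but precisely the one that equals the intrinsic curvature of $g^\e$, and the Gauss equation forces the dangerous quadratic terms $\det(\nabla^2 v^\e)$-type expressions to reorganize into derivatives of $g^\e$ plus genuinely lower-order pieces. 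Making this cancellation explicit — i.e. verifying that every term in $\kappa^\e$ that is not controlled by Proposition~\ref{p:improv} cancels, or is of the form $O(\e^{2\alpha-1})$ via the commutator estimate — is the crux; once the identity $\kappa^\e\,dA^\e = (\text{universal expression in } g^\e, \partial g^\e, \partial^2 g^\e)$ is written down, convergence is immediate from Section~\ref{s:convolutions}. I would therefore organize the proof as: (i) record the Brioschi/Gauss formula expressing $\kappa\,dA$ through the metric; (ii) apply Lemma~\ref{l:mollify} and Proposition~\ref{p:improv} (plus its one-higher-derivative analogue from~\eqref{e:mollify3} with $r=2$) to get $g^\e \to g$ in $C^1$ with controlled second derivatives; (iii) deduce $\kappa^\e\,dA^\e \rightharpoonup \kappa\,dA$; (iv) handle $f(N^\e)\to f(N)$ by uniform convergence of $\nabla v^\e$ on compacta and the boundary cutoff; (v) combine.
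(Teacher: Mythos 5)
There is a genuine gap at exactly the step you flagged as ``the crux,'' and neither of the two mechanisms you offer to handle the second-derivative term $\partial_{kl}g^\e_{ij}$ actually works. (i) You propose to control $\partial^2 g^\e$ directly: you claim $\partial^2(g^\e - g*\varphi_\e)$ is $O(\e^{2\alpha-1})$ in $C^0$ ``by Proposition~\ref{p:improv} differentiated once more, equivalently the commutator estimate~\eqref{e:mollify3} with $r=2$.'' But \eqref{e:mollify3} with $r=2$ gives $O(\e^{2\alpha-2})$, which diverges for every $\alpha<1$; Proposition~\ref{p:improv} is a $C^1$-estimate and does not admit the free upgrade to $C^2$ you assume. (ii) You propose instead that ``$\partial^2 g^\e$ appears paired against $C^1$-convergent quantities,'' so one could integrate by parts and pass to the limit. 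But the coefficient in front of $\partial_{kl}g^\e_{ij}$ is $\psi^\e := f(N^\e)(\det g^\e)^{-1/2}$, and this does \emph{not} converge in $C^1$: $\nabla N^\e$ involves $\nabla^2 v^\e$, so $\|\partial_k\psi^\e\|_{C^0}\sim \e^{\alpha-1}\to\infty$. Your observation that $N^\e\to N$ uniformly is true but is a $C^0$ statement, not a $C^1$ one, and is not enough to close the argument.

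The missing idea, and the precise reason the exponent $2/3$ appears, is a \emph{product} of two sub-critical rates. Integrate by parts once to get $\int_V \partial_k\psi^\e\,\partial_l g^\e_{ij}$. Although $\|\partial_k\psi^\e\|_{C^0}\sim \e^{\alpha-1}$ blows up, Proposition~\ref{p:improv} gives $\|\partial_l g^\e_{ij}-\partial_l g_{ij}\|_{C^0}=O(\e^{2\alpha-1})$, so the replacement of $\partial_l g^\e$ by $\partial_l g$ costs only $O(\e^{\alpha-1}\cdot\e^{2\alpha-1})=O(\e^{3\alpha-2})$, which vanishes precisely when $\alpha>2/3$. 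After this replacement one integrates back by parts (now onto the fixed $C^2$ metric $g$) and passes to the limit using only the \emph{uniform} convergence $\psi^\e\to\psi$, which you do have. Everything else in your outline --- writing $\kappa$ via the Brioschi/Gauss formula with $\partial^2 g$ appearing linearly with constant coefficients, using Proposition~\ref{p:improv} for the first-order and zeroth-order terms, the uniform convergence of $N^\e$, and the boundary cutoff via $\supp f\subset\S^2\setminus N(\partial V)$ --- matches the paper's proof; but without the $\e^{\alpha-1}\cdot\e^{2\alpha-1}$ pairing the argument does not close, and that pairing is the whole point.
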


\begin{proof}[Proof of Proposition \ref{p:conv}] 
In coordinates, our aim is to show
that
\begin{equation}\label{e:goal}
\lim_{\e\downarrow 0} \int_V f (N^\e (x))
\kappa^\e (x)\, (\det g^\e (x))^{\frac{1}{2}}\, dx
\;=\; \int_V f (N (x)) \kappa (x)\, (\det g (x))^{\frac{1}{2}}\, dx\, .
\end{equation}
We recall the formulas for the Christoffel symbols, 
the Riemann tensor and the Gauss curvature in 
$V$, in the system of coordinates already fixed:
\begin{eqnarray}
\Gamma_{jk}^i&=&\frac{1}{2}g^{im}\bigl(\partial_kg_{jm}+
\partial_jg_{mk}-\partial_mg_{kj}\bigr),\label{e:Christoffel}\\
R_{iljk}&=&g_{lm}\bigl(\partial_k\Gamma_{ij}^m-\partial_j
\Gamma_{ik}^m+\Gamma_{ij}^l\Gamma_{kl}^m-\Gamma_{ik}^l
\Gamma_{jl}^m\bigr),
\label{e:Riemannian}\\
\kappa &=&\frac{R_{1212}}{\det(g_{ij})}.\label{e:Gaussian}
\end{eqnarray}
After obvious computations we conclude that 
\begin{equation}\label{e:expression}
\kappa \;=\; (\det g)^{-1} \left(c_{ijkl}\, \partial_{kl} g_{ij}
+ d_{ijklmn} (g)\, \partial_k g_{ij}\, \partial_l g_{mn}\right)
\end{equation}
where $c_{ijkl}$ are {\em constant} coefficients and the functions
$d_{ijklmn}$ are smooth. 

Proposition \ref{p:improv} implies that $\partial_k g^\e_{ij}$ and
$g^\e_{ij}$ converge locally uniformly to 
$\partial_k g_{ij}$ and $g_{ij}$ respectively. Moreover,
$N^\e$ converges locally uniformly to $N$. Since
there is a compact set containing $f (N^\e)$ and $f(N)$,
we only need to show that
\begin{eqnarray}
&&\lim_{\e\downarrow 0} \int_V f (N^\e (x))
(\det g^\e (x))^{-\frac{1}{2}}\, \partial_{kl} g^\e_{ij} (x)\, 
dx\nonumber\\ 
&=& \int_V f (N (x)) (\det g (x))^{-\frac{1}{2}}\, \partial_{kl}
g_{ij} (x)\, dx\, .\label{e:goal2}
\end{eqnarray}
Denote by $\psi^\e$ the function
$f (N^\e (x)) (\det g^\e (x))^{-\frac{1}{2}}$. 
Since $f (N^\e)$ is smooth and compactly supported in $V$ we
can integrate by parts to get
\begin{equation}\label{e:perparti}
\int_V \psi^\e \partial_{kl} g^\e_{ij}\;
=\; \int_V \partial_k \psi^\e \partial_l g^\e_{ij}\, \, .
\end{equation}
Note that $\|\partial_k \psi^\e\|\leq C \e^{\alpha-1}$
by obvious estimates on convolutions. Hence, \eqref{e:improv} gives
\begin{equation}\label{e:perparti2}
\int_V \partial_k \psi^\e \left(\partial_l g^\e_{ij} -
\partial_l g_{ij}\right)\;=\; O (\e^{3\alpha-2})\, 
\end{equation}
which converges to $0$ because $\alpha>3/2$.
%and hence, since $\alpha>2/3$, that
%\begin{equation}\label{e:quasi3}
%\lim_{\e\downarrow 0} \int_V
%\partial_k \psi^\e \partial_{l} g^\e_{ij} 
%\;=\; \int_V \partial_k \psi^\e \partial_l g_{ij}\, .
%\end{equation}
Integrating again by parts, we get
\begin{eqnarray*}
&&\lim_{\e\downarrow 0} \int_V f (N^\e (x))
(\det g^\e (x))^{-\frac{1}{2}}\, \partial_{kl} g^\e_{ij} (x)\, 
dx\\
&=& \lim_{\e\downarrow 0} \int_V f (N^\e (x))
(\det g^\e (x))^{-\frac{1}{2}}\, \partial_{kl} g_{ij} (x)\, 
dx\, .
\end{eqnarray*}
Using the uniform convergence of $N^\e$ to $N$ and
of $g^\e$ to $g$ we then
conclude \eqref{e:goal2} and hence the proof of the
Proposition.
\end{proof}

\subsection{Proof of Lemma \ref{l:nullset} and Corollary
\ref{c:ChangeOfVar2}}

\begin{proof}[Proof of Lemma \ref{l:nullset}]
By the definition of Hausdorff dimension, for every $\e >0$
and $\eta>1$ 
there exists a covering of $E$ with closed sets $E_i$ such that
\begin{equation}
\sum_i (\textrm{diam}\, (E_i))^\eta \;\leq\; \e\, .
\end{equation} 
On the other hand, $\textrm{diam}\, (g (E_i)) \leq C 
(\textrm{diam} (E_i))^\beta$ 
and hence the area $|g(E_i)|$ can be estimated with 
$C (\textrm{diam}\, (E_i))^{2\beta}$. 
Since $\beta>1/2$, we can pick $\eta =
2\beta$ to conclude that 
$$
|g(E)|\;\leq\; C \sum_i (\textrm{diam}\, (E_i))^{\eta}\;\leq\;
C \e\, .
$$
The arbitrariness of
$\e$ implies $|g(E)|=0$.
\end{proof}

\begin{proof}[Proof of Corollary \ref{c:ChangeOfVar2}]
First of all, we know from Proposition
\ref{p:ChangeOfVar} that the formula \eqref{e:ChangeOfVar}
is valid for any open set $V$ which is diffeomorphic to an
open set of $\R^2$, and any $f\in L^\infty$
compactly supported in $\S^2\setminus N (\partial V)$.
Since $\kappa$ is nonnegative, we 
conclude that $\deg (\cdot, N, V)\geq 0$.
Testing \eqref{e:ChangeOfVar} with a sequence
of compactly supported functions $f_k\uparrow {\bf 1}_{\S^2\setminus
N (\partial V)}$
we derive that
$$
\int \deg (y, N, V)\, d\sigma (y) \;=\;
\int_V \kappa\, dA \;<\; \infty\, ,
$$ 
which implies $\deg (\cdot, N, V)\in L^1$.

\medskip

Next, consider a $V$ with smooth boundary.
We decompose it into the union of finitely many 
nonoverlapping Lipschitz open sets $V_i$ diffeomorphic to
open sets of the euclidean plane. Then
$$
\deg (y, N, V) \;=\; \sum_i \deg (y, N, V_i)
\qquad \mbox{for every $y\not\in \bigcup N (\partial V_i)$.}
$$
On the other
hand, by Lemma \ref{l:nullset}, $\bigcup_i N (\partial V_i)$
is a negligible set, and hence we conclude the formula
for $V$ from the previous step.

\medskip

Finally, fix a generic $V$ and an $f\in L^\infty$
with $\supp (f) \subset \S^2 \setminus N (\partial V)$. Choose
an open set $V'$ with smooth boundary
$\partial V'$ sufficiently close to $\partial V$.
Then $\deg (\cdot, V, N)$ and $\deg (\cdot, V', N)$ coincide
on the support of $f$, whereas the support of $f (N (\cdot))$
is contained in $V'$. From the formula for $V'$ and $f$
we conclude then the validity of the formula for $V$ and $f$.
Arguing again as above, we conclude that
$\deg (\cdot, N, V)$ is summable and nonnegative and that
the formula \eqref{e:ChangeOfVar} holds for any $V$ and any 
$f\in L^\infty (\S^2\setminus N (\partial V))$.
\end{proof}

\subsection{Bounded extrinsic curvature. The proof of Theorem \ref{t:2/3}}
We recall the notion of bounded extrinsic curvature for
a $C^1$ immersed surface
(see p. 590 of \cite{Pogorelov73}). 

\begin{definition}\label{d:b_ex}
Let $\Omega\subset \R^2$ be open and $u\in C^1 (\Omega,
\R^3)$ an immersion. 
The surface $u (\Omega)$ has
{\em bounded extrinsic curvature} if there is a $C$ such that
\begin{equation}\label{e:b_ex}
\sum_{i=1}^N |N (E_i)| \;\leq\; C\qquad
\end{equation}
for any finite collection $\{E_i\}$ of pairwise disjoint closed
subsets of $\Omega$.
\end{definition}

The proof of Theorem \ref{t:2/3} follows now from Corollary \ref{c:ChangeOfVar2}.

\begin{proof}[Proof of Theorem \ref{t:2/3}] 
The theorem follows easily from the claim:
\begin{equation}\label{e:bigger}
\deg (\cdot , V, N)\;\geq\; {\bf 1}_{N (V)\setminus
N (\partial V)} \qquad\mbox{for every open $V\subset \Omega$.}
\end{equation}
In fact, given disjoint closed sets $E_1, \ldots, E_N$,
we can cover them with disjoint open sets $V_1, \ldots V_N$
with smooth boundaries. 
By \eqref{e:bigger} and Corollary \ref{c:ChangeOfVar2},
\begin{equation}\label{e:bec}
\sum_i |N(E_i)\setminus N (\partial V_i)|
\;\leq\; \sum_i |N (V_i)\setminus N (\partial V_i)|
\;\leq\; \sum_i \int_{V_i} \kappa \;\leq\; \int_\Omega \kappa\, .
\end{equation}
On the other hand, by Lemma \ref{l:nullset}, $|N (\partial V_i)|=0$.
Thus, \eqref{e:bec} shows \eqref{e:b_ex}.

\medskip

We now come to the proof of \eqref{e:bigger}.
Obviously $\deg (y, V, N) = 0$ if $y\not\in N(V)$.
Moreover, by Corollary \ref{c:ChangeOfVar2}, 
$\deg (\cdot, V, N)\geq 0$.
Therefore, fix $y_0\in N(V)\setminus N (\partial V)$ and assume,
by contradiction,
that $\deg (y_0, V, N)=0$. 
Consider a small open disk $D$ centered
at $y_0$ such that $N^{-1} (D) \cap \partial V =\emptyset$
and let $W:= N^{-1} (D)\cap V$. Then $N(\partial W) \subset
\partial D$ and $N(W)\subset D$. 
So, $\deg (\cdot, W, N)$ vanishes
on $\S^2\setminus
\overline{D}$ and is a constant integer $k$ on $D$. 
On the other hand $k = \deg (y_0, W, N) = \deg (y_0, V, N) -
\deg (y_0, V\setminus \overline{W}, N) = - \deg (y_0, V\setminus
\overline{W}, N)$. Since $y_0\not \in N (V\setminus
\overline{W})$, we conclude $k=0$ and hence
$$
0 \;=\; \int \deg (y, W, N)\, dy \;=\;
\int_W \kappa dA\, .
$$
which is a contradiction becase $W\neq \emptyset$ and $\kappa>0$.
\end{proof}

Corollary \ref{c:2/3} 
follows from 
Theorem \ref{t:2/3} and the results of 
Pogorelov cited in the introduction. 
More precisely, by Theorem 9 on p650 
\cite{Pogorelov73}, $u(S^2)$ is a closed convex surface,
which by \cite{PogorelovRigidity} is rigid. 

Corollary \ref{c:localconvexity} also follows from the results in 
\cite{Pogorelov73} and \cite{Sabitov}. However, we were unable to find an
exact reference for open surfaces, and therefore, for the reader's
convenience, we have included
a proof in the appendix.

\appendix
\section{Proof of Corollary \ref{c:localconvexity}}

First of all, since the theorem is local,
without loss of generality we can assume that:
\begin{itemize}
\item $\Omega = B_r (0)$, 
$u\in C^{1, \alpha} (\overline{B}_r (x))$,
$g\in C^{2,\beta} (\overline{B}_r (x))$ and $u$ is an embedding;
\item $u (\Omega)$ has bounded extrinsic curvature.
\end{itemize}

\medskip

{\bf Step 1. Density of regular points.}
For any point $z\in \S^2$
we let $n (z)$ be the cardinality of $N^{-1} (z)$.
It is easy to see that,
for a surface of bounded extrinsic curvature, 
$\int_{\S^2} n <\infty$
(cp. with Theorem 3 of p. 590 in \cite{Pogorelov73}).
Therefore, the set $E:=\{n=\infty\}$ has
measure zero. Let $\Omega_r:= N^{-1} (\S^2\setminus E)$. 
Observe that
\begin{equation}\label{e:density}
\mbox{$\Omega_r$ is dense in $\Omega$.}
\end{equation}
Otherwise
there is a nontrivial smooth open set $V$ such that $N(V)\subset
E$. But then, $\deg (\cdot, V, N)=0$ for every $y\not\in N
(\overline{V})$, and since $|N(V)|= |N(\partial V)|=0$,
it follows that $\deg (\cdot, V, N)=0$ a.e.. By
Corollary \ref{c:ChangeOfVar2},
$\int_V \kappa = 0$,
which contradicts $\kappa>0$. 

\medskip

{\bf Step 2. Convexity around regular points.}
Note next that, for every $x\in \Omega_r$  there is
a neighborhood $U$ of $x$ such that $N(y)\neq N(x)$ for
all $y\in U\setminus \{x\}$, i.e. $x$ is
regular in the sense of \cite{Pogorelov73} p. 582. Recalling
\eqref{e:bigger},
$\deg (\cdot , V, N)\geq {\bf 1}_{V\setminus
\partial V}$ for every $V$: therefore
the index of the map $N$ at every
point $x\in \Omega_r$ is at least $1$. So, by the Lemma
of page 594 in \cite{Pogorelov73}, any point $x\in \Omega_r$
is an elliptic point relative to the mapping $N$
(that is, there is a neighborhood $U$ of $x$ such that
the tangent plane $\pi$ to $u(\Omega)$ in $x$ intersects
$U\cap u (\Omega)$ only in $u(x)$; cp. with page 593 of
\cite{Pogorelov73}). 

By the discussion of page 650 in \cite{Pogorelov73},
$u (\Omega)$ has nonnegative extrinsic
curvature as defined in IX.5 of \cite{Pogorelov73}. Then,
Lemma 2 of page 612 shows that, for every elliptic
point $y\in u (\Omega)$ there is a neighborhood where
$u (\Omega)$ is convex. This conclusion applies,
therefore, to any $y\in \Omega_r$. We next claim the
existence of a constant $C$ with the following property.
Set $\rho (y):= C^{-1}\min \{1, \dist (u(y), u (\partial \Omega)\}$.
Then
\begin{equation}\label{e:uniform}
\mbox{$u(\Omega)\cap B_{\rho (y)} (y)$ is convex for all 
$y\in \Omega_r$.}
\end{equation}
Recall that $u$ is an embedding and hence
$\dist (u(y), u (\partial \Omega))>0$ for every $y\in \Omega$.
By \eqref{e:density}, \eqref{e:uniform} gives for any
$y\in \Omega$ there is a neighborhood where $u(\Omega)$ is convex.
This would complete the proof.

\medskip

{\bf Step 3. Proof of \eqref{e:uniform}.}
First of all, since $u$ is an embedding and $\|u\|_{C^{1,\alpha}}$
is finite, there is a constant $c_0$ such that, for any point $x$,
$B_{c_0} (x)\cap u (\Omega)$ is the graph of a $C^{1,\alpha}$
function with $\|\cdot\|_{C^{1,\alpha}}$ norm smaller than
$1$. 
In order to prove \eqref{e:uniform} we assume, without loss
of generality, that $y=0$ and that the tangent plane to
$u(\Omega)$ at $y$ is $\{x_3=0\}$. 
Denote by $\pi$ the projection on
$\{x_3=0\}$. By \cite{Sabitov}
there is a constant $\lambda>0$ (depending
only on $\|g\|_{C^{2,\beta}}$, $\|\kappa\|_{C^0}$ and 
$\|\kappa^{-1}\|_{C^0}$) with the following property.
\begin{itemize}
\item[(Est)] Let $U$ be an open convex set such that
$U\cap u(\partial \Omega)=\emptyset$, 
${\rm diam}\, (U) \leq c_0$
and $U\cap u (\Omega)$ is locally convex. 
Then $U\cap u (\Omega)$ is the graph of 
a function $f: \pi (u(\Omega)\cap U)\to \R$ with 
$\|f\|_{C^{2, 1/2}}\leq \lambda^{-1}$ and $D^2 f \geq \lambda {\rm Id}$.
\end{itemize}
\begin{figure}[htbp]
    \input{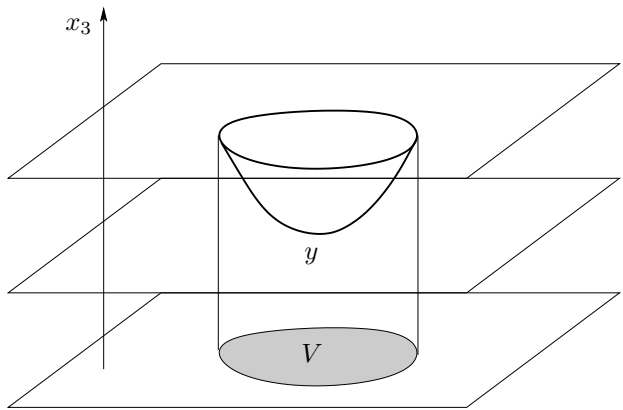}
    \caption{The convex sets of type $V\times ]-a,a[$
among which we choose the maximal one $U_m$.}
    \label{f:maximal}
\end{figure}
We now look for sets $U$ as in (Est) with the additional
property that $U=V\times ]-a, a[$ and $f|_{\partial V}
= a$ (see Figure \ref{f:maximal}).
Let $U_m$ be the maximal
set of this form for which the assumptions of (Est) hold.
We claim that, either $\partial U_m \cap u (\partial \Omega)\neq
\emptyset$, or ${\rm diam}\, (U_m) = c_0$. By (Est),
this claim easily implies \eqref{e:uniform}.
To prove the claim, assume by contradiction that it
is wrong and let $U_m = W_m \times ]-a_m, a_m[$ be the maximal set. 
Let $\gamma= \partial U_m \cap u(\Omega)$. By the choice
of $c_0$,
$\gamma$ is necessarily the curve $\partial W_m\times \{a\}$.
On the other hand, by the estimates of (Est), it follows
that every tangent plane to $u (\Omega)$ at a point of $\gamma$
is transversal to $\{x_3=0\}$. So, for a sufficiently small
$\e>0$, the intersection $\{x_3=a_m+\e\}\cap u (\Omega)$
contains a curve $\gamma'$ bounding a connected region $D\subset 
u(\Omega)$ which contains $u (\Omega)\cap U_m$. By Theorem 8
of page 650 in \cite{Pogorelov73}, $D$ is a convex set. This easily
shows that $U_m$ was not maximal.

\bibliographystyle{acm}
\bibliography{iso}

\end{document}